\documentclass{amsart}
\RequirePackage{fix-cm}

\usepackage[utf8]{inputenc}
\usepackage{multirow}
\usepackage{nicefrac}
\usepackage{tikz-cd}
\usepackage[font={footnotesize}]{subcaption}
\numberwithin{equation}{section}
\usepackage{tikz}
\usetikzlibrary{positioning}
\usepackage{amssymb,amsmath,amsfonts,latexsym}
\usepackage{graphicx}
\usepackage[all]{xy}
\usepackage{kmath}
\usepackage{cite}
\usepackage[T1]{fontenc}
\usepackage{hyperref}
\usepackage[parfill]{parskip}
\usepackage{float}
\usepackage{xparse}
\usepackage{tikz}
\usepackage{newtxtext,newtxmath}
\usepackage{tkz-euclide}
\usetikzlibrary{decorations.markings,arrows}
\tikzset{
    arrowMe/.style={
        postaction=decorate,
        decoration={
            markings,
            mark=at position .5 with {\arrow[thick]{#1}}
        }
    }
}

\usetikzlibrary{backgrounds,calc}
\tikzset{point/.style = {fill=black,circle,inner sep=0.7pt}}
\pgfarrowsdeclaredouble{<<s}{>>s}{stealth}{stealth}
\pgfarrowsdeclaretriple{<<<s}{>>>s}{stealth}{stealth}
\tikzstyle{vertex}=[circle,fill=black!25,minimum size=20pt,inner sep=0pt]

\tikzstyle{edge} = [draw]
\tikzstyle{weight} = [font=\tiny]

\title{ From Dual Connections to Almost Contact Structures }

\author{Emmanuel Gnandi}
\address{ENAC, Universit\'e de Toulouse}
\email{kpanteemmanuel@gmail.com}

\author{Stéphane Puechmorel}
\address{ENAC, Universit\'e de Toulouse}
\email{stephane.puechmorel@enac.fr}

\theoremstyle{plain}
\newtheorem{thm}{Theorem}[section]
\newtheorem{cor}{Corollary}[section]
\newtheorem{lem}[thm]{Lemma}
\newtheorem{prop}{Proposition}[section]
\newtheorem{rem}[thm]{Remark}
\theoremstyle{definition}
\newtheorem{defn}[thm]{Definition}

\theoremstyle{remark}
\numberwithin{equation}{section}

\newcommand{\R}{\ensuremath{\mathbb{R}}}

\newcommand{\lc}{\ensuremath{\nabla^{lc}}}
\newcommand{\ricc}{\ensuremath{\text{Ric}}}
\newcommand{\met}[2]{\ensuremath{g\left(#1,#2\right)}}

\begin{document}
\maketitle
\textbf{Keywords:} dual connections, auto-dual connections, torsionless dual connections, auto-dual torsionless connection(Levi-Civita connection), gauge equation of dual connections, almost cosymplectic structure, almost symplectic structure, symplectic structure, cosymplectic structure, almost contact structure, almost contact metric structure, coKhaler structure.

\section{Introduction}
Finding characteristic obstructions to the existence of structures is a particularly important question 
arising in mathematics. In this work, we give conditions for an orientable manifold to admit an\textbf{ almost contact
structure(almost cosymplectic structure), almost contact metric structure, cosymplectic(symplectic mapping torus) structure}, using the notion of dual connections that was introduced in the context of information geometry
 \cite{amari1987differential,amari2012differential}. We  will also use information geometry to describe the relationships between the structures on an even dimensional manifold and the corresponding ones on an odd dimensional manifold.
Going back to the original paper \cite{sasaki1960}, 
 given  a differentiable manifold $M$ of odd dimension $2n+1$, 
an \textbf{almost contact structure} is defined by a triple $(\phi,\xi,\eta)$ with 
$\phi \in T_1^1(M), \xi \in T(M),\eta \in T^*(M)$ and such that:
\begin{align}
    & \eta(\xi) = 1 \label{subeq:eta_xi}\\
    & \phi^2 = -\text{Id} + \eta \otimes \xi \label{subeq:contact_def_prop}
\end{align}
A manifold with an almost contact structure can also be defined equivalently as one whose structure group is reductible
to $U(n) \times 1$. 

From the above equation, one can easily deduce the next proposition:
\begin{prop}
    \label{prop:contact_basic_prop}
    \begin{align}
  & \text{rank }\phi = 2n \label{subeq:contact_rank}\\
    & \phi \xi = 0 \label{subeq:phi_xi}\\
    & \eta \phi = 0  \label{subeq:eta_phi}
    \end{align}
\end{prop}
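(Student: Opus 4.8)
The plan is to argue pointwise: for each $p \in M$ we treat $\phi_p\colon T_pM \to T_pM$ as a linear endomorphism, $\xi_p \in T_pM$ and $\eta_p \in T_p^*M$, and read \eqref{subeq:eta_xi}--\eqref{subeq:contact_def_prop} as identities of linear algebra on the $(2n+1)$-dimensional space $T_pM$. The one fact beyond \eqref{subeq:contact_def_prop} that we keep using is that \eqref{subeq:eta_xi} forces $\xi_p \neq 0$ for every $p$; this is exactly what lets us cancel $\xi_p$ from identities of the shape $(\,\cdots)\,\xi_p = 0$.

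First I would prove \eqref{subeq:phi_xi}. Evaluating \eqref{subeq:contact_def_prop} on $\xi$ and using \eqref{subeq:eta_xi} gives $\phi^2\xi = -\xi + \eta(\xi)\,\xi = 0$. Writing $v := \phi\xi$ we then have $\phi v = 0$, so a second application of \eqref{subeq:contact_def_prop}, this time to $v$, gives $0 = \phi^2 v = -v + \eta(v)\,\xi$, i.e. $v = \eta(v)\,\xi$. Applying $\phi$ once more yields $0 = \phi v = \eta(v)\,\phi\xi = \eta(v)\,v$, hence $\eta(v)^2\,\xi = 0$, and since $\xi_p \neq 0$ this forces $\eta(v)=0$ and therefore $v=0$, which is $\phi\xi = 0$.

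Next, for \eqref{subeq:eta_phi}, I would evaluate \eqref{subeq:contact_def_prop} on the vector $\phi X$ and compare with $\phi$ applied to \eqref{subeq:contact_def_prop}: on one side $\phi^2(\phi X) = -\phi X + \eta(\phi X)\,\xi$, on the other $\phi^2(\phi X) = \phi(\phi^2 X) = \phi\big(-X + \eta(X)\,\xi\big) = -\phi X$, using $\phi\xi = 0$ from the previous step. Subtracting, $\eta(\phi X)\,\xi = 0$ for all $X$, and cancelling $\xi_p \neq 0$ gives $\eta\circ\phi = 0$.

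Finally, \eqref{subeq:contact_rank} drops out: $\phi_p\xi_p = 0$ with $\xi_p \neq 0$ shows $\ker\phi_p \neq 0$, so $\operatorname{rank}\phi_p \le 2n$; and if $\phi_p X = 0$ then $0 = \phi_p^2 X = -X + \eta(X)\,\xi$ gives $X = \eta(X)\,\xi_p \in \operatorname{span}(\xi_p)$, so $\ker\phi_p = \operatorname{span}(\xi_p)$ is one-dimensional and $\operatorname{rank}\phi_p = (2n+1)-1 = 2n$. (One could instead note $\det(\phi_p)^2 = \det(\phi_p^2) = \det(-\text{Id}+\eta_p\otimes\xi_p) = \pm(1-\eta(\xi)) = 0$ to see that $\phi_p$ is singular, then combine with $\ker\phi_p \subseteq \operatorname{span}(\xi_p)$.) Everything here is elementary; the only step that needs an idea — such as it is — is the first one, since $\phi^2\xi = 0$ by itself does not give $\phi\xi = 0$, and one must route through $\xi_p \neq 0$ via the short chain above.
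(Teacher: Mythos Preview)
Your proof is correct. The paper's sketch proceeds in the opposite order: it first notes that $\ker\phi\cap\ker\eta=\{0\}$ (since $\phi^2=-\mathrm{Id}$ on $\ker\eta$), then argues from $\eta\circ\phi^2=0$ to $\eta\circ\phi=0$, and only afterwards deduces $\phi\xi=0$ by observing that $\phi\xi$ lies in $\ker\phi\cap\ker\eta$. Your route --- establishing $\phi\xi=0$ first via the short chain $v=\phi\xi\Rightarrow v=\eta(v)\xi\Rightarrow \eta(v)^2\xi=0$, and then obtaining $\eta\circ\phi=0$ by comparing the two expansions of $\phi^3$ --- is equally elementary and has the advantage of being fully self-contained at each step, whereas the paper leaves the passage from $\eta\circ\phi^2=0$ to $\eta\circ\phi=0$ (and the rank claim) to the reader. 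Either order works; yours is the more explicit write-up.
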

The proof is elementary and relies only on basic linear algebra.
In fact, if $p \in M$ and $X \in T_pM \neq 0 $ is such that $\eta(X) = 0$, then $\phi^2(X) = -X$, and $X \notin \ker \phi$.  
From \ref{subeq:contact_def_prop}, $\eta(\phi^2(X)) = -\eta(X) + \eta(\xi)\eta(X) = 0$ and using the previous remark it implies 
$\eta(\phi(X)) = 0$. Since $\phi^2(\xi) =0$, it comes at once that $\phi(\xi) = 0$.
A Riemannian metric $g$ on $M$ is said to be adapted to the almost contact structure if it satisfies for all vector fields $X,Y$:
\begin{equation}
    \label{eq:adapted_metric}
    g\left(\phi(X),\phi(Y)\right) = g(X,Y) - \eta(X)\eta(Y)
\end{equation}
Using \ref{subeq:phi_xi} and the above definition it comes:
\begin{equation}
    \label{eq:eta_xi_relationship}
    \eta(X) = g(X,\xi)
\end{equation}
and in turn:
\begin{equation}
    \label{eq:phi_xi_metric}
    g\left(\phi(X),\xi \right)=0
\end{equation}
The endomorphism $\phi$ is skew-symetric with respect to an adapted metric:
\begin{equation}
    \label{eq:skew_symetry}
    g\left(X,\phi(Y)\right)=-g\left(\phi(X),Y\right)
\end{equation}
and thus gives rise to a canonical $2$-form $\Omega$:
\begin{equation}
    \label{eq:2-form}
    \Omega(X,Y) = g\left(X, \phi(Y)\right)
\end{equation}
When $\Omega = d\eta$, the \textbf {almost contact structure} is said to be \textbf{a contact metric structure}. Finally, if $\xi$ is Killing, then 
the structure is said to be $K$-contact. Any $3$-dimensionnal oriented Riemannian manifold $(M,g)$ admits an \textbf{almost contact
 structure} with $g$ as adapted metric \cite{cabrerizo2009}. For classification of almost
contact metric structures (see also\cite{chinea1990classification}).

 In 1969 M. Gromov \cite{gromov1969stable} proved that, any \textbf{almost contact} open manifold 
M admits a contact structure. A similar result is proved in   case of closed oriented 3-dimensionnal manifold by Lutz \cite{lutz1970existence} and Martinet \cite{martinet1971formes}, the case of 5-dimensional is proved by J. Etnyre \cite{etnyre2012contact} and the work of
R. Casals, D.M. Pancholi and F. Presas \cite{casals2015almost}. In \cite{borman2015existence}, Matthew Strom Borman, Yakov Eliashberg and Emmy Murphy proved the same result in any dimension. 

 Being an almost contact manifold is a purely topological condition. In dimension 5, it boils down to the vanishing of the 
 third integral Stiefel-Whitney class. In \cite{Bar65}, this property is used to classify simply connected almost contact
 manifolds.
Recall that \textbf{almost cosymplectic manifold}(cf.  \cite{libermann1962quelques},\cite{albert1989theoreme}) of dimension $2n+1$ is a triple $(M, \omega,\eta)$ such that
the 2-form $\omega$ and the 1-form $\eta$ satisfy $\omega^n \wedge \eta \neq 0$.
In the language of $G$-structures, an \textbf{almost cosymplectic structure} can be defined equivalently as an $1 \times Sp(n,R)$-structure. 

 From \cite{albert1989theoreme}, every almost cosymplectic structure  on $M$ induces an isomorphism 
of $C^{\infty}(M)$-modules
\[
\flat_{(\eta,\omega)}:
\begin{cases}& \mathcal{X}(M)\rightarrow \Omega^{1}(M) \\
    & X \mapsto i_{X}\omega+\eta(X)\eta \\
    \end{cases}
    \] 
for every vector field $X\in \mathcal{X}(M)$. 
A vector bundle isomorphism (denoted with the same symbol) $\flat_{(\eta,\omega)}:TM\rightarrow T^{\star}M$ is also induced. 
Then the vector field  \[\xi=\flat^{-1}_{(\eta,\omega)}(\eta)\] on M is called the Reeb vector field of 
the almost cosymplectic manifold $(M, \eta, \omega)$ and is characterized by the following conditions
\[  i_{\xi}\omega=0 \quad\text{and}\quad i_{\xi}\eta=1           \]

Conversely, we have the following characterization of almost cosymplectic manifolds that follows from \cite{albert1989theoreme}[proposition 2]

\begin{prop}
Let M be a manifold endowed with a 1-form $\eta$ and a 2-form $\omega$ 
such that the map $\flat_{(\eta,\omega)}:TM\rightarrow T^{\star}M$ is an isomorphism.
Assume also that there exists a vector field $\xi$ such that $i_{\xi}\omega=0$ and $\eta(\xi)=1$. 
Then, M has odd dimension and $(M,\eta,\omega)$ is an almost cosymplectic manifold with Reeb vector field $\xi$.
\end{prop}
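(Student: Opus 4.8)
The plan is to localise the statement: everything will follow by applying linear algebra to the alternating form $\omega_p$ on each tangent space $T_pM$, and then reading off the global consequences. First, note that the Reeb condition for $\xi$ is automatic once we know $(M,\eta,\omega)$ is almost cosymplectic: by hypothesis $\flat_{(\eta,\omega)}(\xi) = i_\xi\omega + \eta(\xi)\,\eta = 0 + \eta = \eta$, so $\xi = \flat_{(\eta,\omega)}^{-1}(\eta)$, which is precisely the definition of the Reeb vector field recalled above, and in particular the relations $i_\xi\omega = 0$, $\eta(\xi)=1$ that characterise it hold by assumption.

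The substantive step is to compute the radical of $\omega$ at an arbitrary point $p\in M$, namely $K_p := \{\,v\in T_pM : i_v\omega_p = 0\,\}$. Since $\eta_p(\xi_p)=1$ we have $\xi_p\neq 0$, and $i_{\xi_p}\omega_p = 0$, so $\xi_p\in K_p$ and $\dim K_p\ge 1$. For the reverse inclusion, if $v\in K_p$ then $\flat_{(\eta,\omega)}(v) = \eta_p(v)\,\eta_p = \eta_p(v)\,\flat_{(\eta,\omega)}(\xi_p) = \flat_{(\eta,\omega)}\!\big(\eta_p(v)\,\xi_p\big)$, and injectivity of $\flat_{(\eta,\omega)}$ gives $v = \eta_p(v)\,\xi_p$. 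Hence $K_p = \mathbb{R}\,\xi_p$ and $\dim K_p = 1$. Now the rank of the alternating form $\omega_p$ equals $\dim M - \dim K_p = \dim M - 1$, and the rank of an alternating bilinear form is always even; therefore $\dim M$ is odd, say $\dim M = 2n+1$.

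It remains to verify the non-degeneracy condition $\omega^n\wedge\eta\neq 0$ pointwise. By the normal form for an alternating form of rank $2n$ on a $(2n+1)$-dimensional space, pick a basis $(\xi_p,e_1,f_1,\dots,e_n,f_n)$ of $T_pM$, with dual basis $(\theta^0,e^1,f^1,\dots,e^n,f^n)$, such that $\omega_p = \sum_{i=1}^n e^i\wedge f^i$; this is possible precisely because $\mathbb{R}\xi_p$ is the radical. Then $\omega_p^{\,n} = n!\, e^1\wedge f^1\wedge\cdots\wedge e^n\wedge f^n$, which involves no $\theta^0$ factor. Writing $\eta_p = \eta_p(\xi_p)\,\theta^0 + \alpha$ with $\alpha\in\mathrm{span}(e^i,f^i)$, the wedge product kills the $\alpha$-part (repeated factors), leaving $\omega_p^{\,n}\wedge\eta_p = n!\,\eta_p(\xi_p)\; e^1\wedge f^1\wedge\cdots\wedge e^n\wedge f^n\wedge\theta^0 \neq 0$ since $\eta_p(\xi_p)=1$. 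As $p$ was arbitrary, $\omega^n\wedge\eta$ is a nowhere-vanishing top-degree form, so $(M,\eta,\omega)$ is an almost cosymplectic manifold (in particular $M$ is orientable), and by the first paragraph $\xi$ is its Reeb vector field.

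I do not expect a genuine obstacle here: the whole argument is pointwise multilinear algebra applied uniformly over $M$. The one place where the hypotheses are really used is the identification $K_p = \mathbb{R}\xi_p$ — the existence of $\xi$ with $i_\xi\omega=0$, $\eta(\xi)=1$ gives the inclusion $\supseteq$ and shows the radical is nontrivial, while the isomorphism property of $\flat_{(\eta,\omega)}$ gives $\subseteq$ and pins the radical down to dimension exactly one; after that, evenness of the rank and the standard normal form do the rest.
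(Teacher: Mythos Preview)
Your argument is correct. The paper does not actually supply its own proof of this proposition: it is stated as a consequence of \cite{albert1989theoreme}[proposition~2] and left unproved in the text. Your pointwise linear-algebra argument --- identifying the radical of $\omega_p$ as $\mathbb{R}\xi_p$ via injectivity of $\flat_{(\eta,\omega)}$, deducing odd dimension from the parity of the rank of an alternating form, and then checking $\omega^n\wedge\eta\neq 0$ in a Darboux-type basis --- is the standard self-contained route and fills in precisely what the paper defers to the reference. There is nothing to compare against beyond noting that you have provided an explicit proof where the paper gives only a citation.
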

By a \textbf{cosymplectic manifold}, we mean a
(2n+1)-manifold M together with a closed 1-form $\eta$ and a closed 2-form $\omega$ such that
$\eta\wedge\omega^{n}$
 is a volume form. This was P. Libermann’s definition in 1959 \cite{libermann1959automorphismes}, under the
name of cosymplectic manifold. The pair $(\eta, \omega)$ is called a \textbf{cosymplectic structure}
on M. In \cite{blair2010riemannian}, Blair gives an equivalent definition of cosymplectic manifolds, which is more often referred to in the literature, see \cite{li2008topology},\cite{bazzoni2015k},\cite{chinea1993topology},\cite{cappelletti2013survey},\cite{fujimoto1974cosymplectic}, \cite{olszak1989locally}, \cite{olszak1981almost}. From Blair\cite{blair2010riemannian} an almost contact metric structure $(\theta,\xi,\eta,g)$ on an odd-dimensional smooth manifold M
is cosymplectic if $d\eta=d\Omega=0$, where $\Omega$ is the fundamental 2-form.\textbf{ The cosymplectic manifolds} can be thought of as an odd-dimensional counterpart of symplectic manifolds.
In fact, on any cosymplectic manifold $(M,\eta,\omega)$ the so-called horizontal distribution $\ker \eta$ is integrable to
a symplectic foliation of codimension 1. 
On the other hand, one has the following result due to Manuel de Léon and Martin Saralegi:
\begin{thm}[\cite{de1993cosymplectic}]
Let M be a manifold and $\omega$, $\eta$ two differential forms on M with degrees 2 and 
1 respectively. Consider, on $Y = M\times\mathbb{R}$, the differential 2-form $\Omega = pr^{\star}\omega + pr^{\star}\eta\wedge dt$, where $t\in\mathbb{R}$ and $pr:Y\rightarrow M.$ Then: $(M, \eta, \omega)$ is a cosymplectic manifold if and only if $(Y,\Omega)$ is a symplectic manifold.
\end{thm}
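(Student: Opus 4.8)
The plan is to reduce the two defining properties of a symplectic form on $Y=M\times\mathbb{R}$ --- closedness and nondegeneracy --- to the three defining properties of a cosymplectic structure on $M$ --- $d\eta=0$, $d\omega=0$, and $\eta\wedge\omega^{n}$ a volume form --- by exploiting the product structure of $Y$. Under that structure every differential form on $Y$ splits canonically into a summand that is the pullback of a form on $M$ (no $dt$ factor) and a summand of the shape $pr^{\star}(\cdot)\wedge dt$, and these two spaces are complementary in each exterior power.

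For closedness, since $d(dt)=0$ one computes $d\Omega=pr^{\star}(d\omega)+pr^{\star}(d\eta)\wedge dt$. The first term has no $dt$ factor and the second has one, so $d\Omega=0$ is equivalent to the separate vanishing of both terms; and since $pr:Y\to M$ is a submersion, $pr^{\star}$ is injective on forms (and $\alpha\mapsto pr^{\star}\alpha\wedge dt$ is too), so this is in turn equivalent to $d\omega=0$ and $d\eta=0$. For nondegeneracy, first note that $\dim M$ must be odd: in the forward implication this is built into the definition of a cosymplectic manifold, and in the backward implication it follows from $\dim Y=\dim M+1$ being even. Writing $\dim M=2n+1$, so $\dim Y=2n+2$, the form $\Omega$ is nondegenerate if and only if $\Omega^{n+1}$ is nowhere zero. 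Expanding the $(n+1)$-st power and using $(pr^{\star}\eta\wedge dt)\wedge(pr^{\star}\eta\wedge dt)=0$ together with $(pr^{\star}\omega)^{n+1}=pr^{\star}(\omega^{n+1})=0$ (because $\omega^{n+1}$ has degree $2n+2>\dim M$), only one monomial survives:
\[ \Omega^{n+1}=(n+1)\,pr^{\star}\!\left(\omega^{n}\wedge\eta\right)\wedge dt . \]
At a point $(x,t)\in Y$ this top-degree form is nonzero precisely when $\omega^{n}\wedge\eta=\eta\wedge\omega^{n}$ is nonzero at $x$, i.e. exactly when $\eta\wedge\omega^{n}$ is a volume form on $M$ (which in particular forces $M$ orientable).

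Combining the two parts, $(Y,\Omega)$ is symplectic $\iff$ $d\Omega=0$ and $\Omega$ nondegenerate $\iff$ $d\omega=d\eta=0$ and $\eta\wedge\omega^{n}$ a volume form $\iff$ $(M,\eta,\omega)$ is a cosymplectic manifold. The argument is essentially bookkeeping; the one point I would spell out carefully --- and the only thing that could be called an obstacle --- is the claim that the two bidegree components of $d\Omega$ (and the various monomials appearing in the expansion of $\Omega^{n+1}$) vanish independently. This is exactly where the product structure of $Y$, the injectivity of $pr^{\star}$ and of wedging with the nowhere-zero form $dt$, and the vanishing of exterior powers of degree exceeding $\dim M$ are genuinely used.
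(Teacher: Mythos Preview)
Your proof is correct. The paper does not supply its own proof of this theorem: it is stated as a cited result from \cite{de1993cosymplectic} and left unproved. The only trace of the argument in the paper is the identity $\Omega^{n+1}=(n+1)\,pr^{\star}(\eta\wedge\omega^{n})\wedge dt$, which is invoked later (again with attribution to \cite{de1993cosymplectic}) in the proof of Corollary~3.2; this is exactly the computation you carry out for the nondegeneracy part, so your approach is consistent with what the paper relies on.
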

\textbf{The Darboux theorem} admits an equivalent in cosymplectic structure.\\Any cosymplectic manifold $(M, \eta, \omega)$ of dimension 2n + 1 admits around any point local coordinates
$(t, q^{\alpha}
, p_{\alpha}), \alpha = 1, . . . , n$, such that:
$$ \omega=\sum_{\alpha=1}^{n}dq^{\alpha}\wedge dp_{\alpha},\quad \eta=dt,\quad \xi=\frac{\partial}{\partial t}            $$
 In 2008, HONGJUN LI  main theorem in \cite{li2008topology} asserts that cosymplectic manifolds are equivalent to symplectic mapping tori. The main idea of Li's proof comes from the theorem of Tischler\cite{tischler1970fibering}, which states that: A compact manifold admits a non-vanishing closed
1-form if and only if the manifold fibres over a circle. This assertion is also equivalent to: A compact manifold is a mapping torus if and only if
it admits a non-vanishing closed 1-form. The codimension-one, co-orientable foliations defined by  the kernel of nowhere-zero closed  one form  are termed
unimodular foliations. In \cite{guillemin2011codimension} for the codimension one co-orientable, the existence of an unimodular foliation is equivalent to a vanishing modular class.
\begin{thm}[\cite{guillemin2011codimension}]
The first obstruction class(The modular class) $c_{\mathcal{F}}$ vanishes identically if and only if we can chose $\eta$ the defining one-form of the foliation $\mathcal{F}$ to be closed.
\end{thm}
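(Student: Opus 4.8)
The plan is to make the definition of the modular class explicit and then obtain the closed defining form by an integrating-factor argument. Recall that a co-orientable codimension-one foliation $\mathcal{F}$ on $M$ is cut out by a nowhere-vanishing $1$-form $\eta$, unique up to multiplication by a nowhere-vanishing function, and that Frobenius integrability $\eta\wedge d\eta=0$ forces $d\eta=\alpha\wedge\eta$ for some $1$-form $\alpha$, which is determined only modulo the line subbundle $\mathbb{R}\eta=\mathrm{Ann}(T\mathcal{F})$. The first step is to check that $\alpha$ determines a class in the foliated (tangential) cohomology $H^1(\mathcal{F})$: differentiating $d\eta=\alpha\wedge\eta$ gives $0=d\alpha\wedge\eta-\alpha\wedge d\eta=d\alpha\wedge\eta$, so the leafwise part $\bar\alpha$ of $\alpha$ is $d_{\mathcal{F}}$-closed; and replacing $\eta$ by $f\eta$ replaces $\alpha$ by $\alpha+d\log|f|$ modulo $\mathbb{R}\eta$, i.e.\ changes $\bar\alpha$ by the foliated-exact form $d_{\mathcal{F}}\log|f|$. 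Hence $c_{\mathcal{F}}:=[\bar\alpha]\in H^1(\mathcal{F})$ is independent of the chosen defining form, and this is the object appearing in the statement.

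One implication is then immediate: if $\mathcal{F}$ admits a closed defining $1$-form, take it for $\eta$; then $d\eta=0$ permits the choice $\alpha=0$, so $c_{\mathcal{F}}=0$.

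For the converse, I would assume $c_{\mathcal{F}}=0$ and fix any defining form $\eta$ with $d\eta=\alpha\wedge\eta$. The vanishing of $[\bar\alpha]$ in $H^1(\mathcal{F})$ provides a \emph{global} function $h\in C^{\infty}(M)$ with $\bar\alpha=d_{\mathcal{F}}h$; lifting back to $M$, this says $\alpha=dh+g\,\eta$ for some $g\in C^{\infty}(M)$. Then I would pass to $\eta':=e^{-h}\eta$, which is again a defining form for $\mathcal{F}$, and compute
\[
d\eta'=e^{-h}\bigl(-dh\wedge\eta+d\eta\bigr)=e^{-h}(\alpha-dh)\wedge\eta=e^{-h}\,(g\,\eta\wedge\eta)=0,
\]
so $\eta'$ is the desired closed defining form.

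I do not expect a serious obstacle beyond bookkeeping; the two points that deserve care are, first, that it is exactly the vanishing of $c_{\mathcal{F}}$ --- and not merely the leafwise local exactness of $\alpha$, which is automatic from $d_{\mathcal{F}}\bar\alpha=0$ --- that yields the \emph{globally} defined primitive $h$ needed to build the integrating factor $e^{-h}$; and second, that co-orientability is precisely what guarantees that $\eta$ is a globally defined, nowhere-vanishing $1$-form, so that $e^{-h}\eta$ and $d\log|f|$ make sense. No compactness of $M$ is required.
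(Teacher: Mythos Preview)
Your argument is correct and is the standard integrating-factor proof of this fact: make the modular class explicit via $d\eta=\alpha\wedge\eta$, check well-definedness in leafwise cohomology, and then use a global primitive $h$ of $\bar\alpha$ to rescale $\eta$ to the closed form $e^{-h}\eta$. The bookkeeping you flag (global existence of $h$ from $c_{\mathcal F}=0$, and the role of co-orientability) is exactly right.

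There is, however, nothing to compare against in this paper: the theorem is quoted from \cite{guillemin2011codimension} without proof and is used only as a black box in the later arguments (Theorem~\ref{thm:gauge_dual_torsionless}). So your write-up is not a variant of the paper's proof but rather a self-contained proof the paper chose to outsource; it is precisely the argument one finds in the cited reference.
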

In section 2, we briefly summarize results about the gauge equation for dual connections. 
There is no claim of originality here, only a reformulation of the previous results obtained by Pr. M. Boyom.
 In section 3, we discuss the relationship between skew-symmetric solutions of maximal rank of the gauge equation and
  the existence of almost cosympletic structure, almost contact metric structure and cosymplectic structure(symplectic mapping torus).
Finally, the case of dimension 3 coKähler manifolds is treated in the final part of the article.

\section{Gauge transformations and parallelism}
In this section, $(M,g)$ is a smooth Riemannian manifold. As usual, for a vector bundle $E \to M$,
 $\Gamma(E)$ denotes the space of smooth sections. 
 For any affine connection $\nabla$, its dual connection
 $\nabla^*$ is defined by the relation 
 $\left( \nabla_Y^*X \right)^\flat   = \nabla_YX^{\flat}$, or equivalently as satisfying 
 for any vector fields $X,Y,Z$ in $TM$, the equation:
 \begin{equation}
  \label{eq:dual_connection}
  Z\left( \met{X}{Y}\right) = \met{\nabla_Z X}{Y}  + \met{X}{\nabla_Z^* Y}
 \end{equation}
The equation \ref{eq:dual_connection} proves by symmetry that $\nabla^{**}=\nabla$.

On 1-forms, the duality relation  becomes
$\left( \nabla_X \omega \right)^\sharp = \nabla_X^* \omega^\sharp$, for any 1-form $\omega$ and vector
field $X$.

 The levi-civita connection $\lc$ is self-dual and for any connection $\nabla$ without torsion:
 \begin{equation}
  \label{eq:sum_dual}
  \nabla = \lc - \frac{1}{2} D, \, \nabla^* = \lc + \frac{1}{2} D
 \end{equation}
 with $D=\nabla^*-\nabla$ a $(2,1)$-tensor (since the difference of two affine connections is a tensor).
 
 The relationship between the curvatures of two dual connections is
given by :\[g(R^{\nabla}(X,Y)V,W)=-g(V,R^{\nabla^*}(X,Y)W).
\]
 A connection $\nabla$ in $TM$ is said to be metric if $\nabla g = 0$, i.e.:
 \[ X.(g(Y,Z))=g(\nabla_{X}Y,Z)+g(Y,\nabla_{X}Z),\quad \text{for any vector fields X,Y,Z}. \] 
 Metric connections are not unique, but differ only by the torsion. As a consequence of $\nabla g=0$ one has  $$ g(R^{\nabla}(X,Y)V,W)=-g(V,R^{\nabla}(X,Y)W). $$

\begin{prop}
  \label{prop:diff_nabla}
  $D$ is symmetric in its first two arguments. Furthermore, for any vector fields $X,Y,Z$:
  \[
  \met{D(Z,X)}{Y} = \met{X}{D(Z,Y)}
    \]
\end{prop}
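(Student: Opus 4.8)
The plan is to derive both assertions directly from the defining equation \eqref{eq:dual_connection} together with the decomposition \eqref{eq:sum_dual}. First I would establish the symmetry of $D$ in its first two arguments. Since $\nabla$ and $\nabla^{*}$ are affine connections, their torsions $T^{\nabla}(X,Z) = \nabla_X Z - \nabla_Z X - [X,Z]$ and $T^{\nabla^{*}}$ are tensorial, and subtracting gives $T^{\nabla^{*}}(X,Z) - T^{\nabla}(X,Z) = D(X,Z) - D(Z,X)$. Now I invoke the hypothesis in force for this proposition: the relevant connections are torsionless (this is the setting of \eqref{eq:sum_dual}, where $\nabla$ is assumed without torsion, and $\nabla^{*} = \lc + \tfrac12 D$ then also has vanishing torsion because $\lc$ does and $D$ is symmetric — or, read the other way, $T^{\nabla} = T^{\nabla^{*}} = 0$ forces $D(X,Z) = D(Z,X)$). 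Either way the antisymmetric part of $D$ in its first two slots vanishes, which is the first claim.

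For the second identity I would start from the metric-compatibility-type relation obtained by writing \eqref{eq:dual_connection} once and using $D = \nabla^{*} - \nabla$. Concretely, subtract the defining equation for $\nabla^{*}$ from the analogous statement with the roles of $\nabla$ and $\nabla^{*}$ interchanged (legitimate since $\nabla^{**} = \nabla$, as noted after \eqref{eq:dual_connection}): for vector fields $X,Y,Z$ one gets
\[
0 = \met{\nabla_Z X - \nabla_Z^{*} X}{Y} + \met{X}{\nabla_Z^{*} Y - \nabla_Z Y} = -\met{D(Z,X)}{Y} + \met{X}{D(Z,Y)},
\]
which rearranges at once to $\met{D(Z,X)}{Y} = \met{X}{D(Z,Y)}$. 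Alternatively, and perhaps more transparently, one can use \eqref{eq:sum_dual}: since $\nabla$ and $\nabla^{*}$ are dual, $Z\bigl(\met{X}{Y}\bigr) = \met{\nabla_Z X}{Y} + \met{X}{\nabla_Z^{*}Y}$, while metricity of $\lc$ gives $Z\bigl(\met{X}{Y}\bigr) = \met{\lc_Z X}{Y} + \met{X}{\lc_Z Y}$; substituting $\nabla_Z = \lc_Z - \tfrac12 D(Z,\cdot)$ and $\nabla^{*}_Z = \lc_Z + \tfrac12 D(Z,\cdot)$ into the first and comparing with the second yields $-\tfrac12\met{D(Z,X)}{Y} + \tfrac12\met{X}{D(Z,Y)} = 0$, the desired symmetry.

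I expect no serious obstacle here; the only point that requires care is making explicit the standing hypothesis under which $D$ is symmetric in its first two arguments — namely that we are in the torsionless situation of \eqref{eq:sum_dual} — since for a general pair of dual connections $D$ need only satisfy $D(X,Z) - D(Z,X) = T^{\nabla^{*}}(X,Z) - T^{\nabla}(X,Z)$. The second ($\met{D(Z,X)}{Y}$) identity, by contrast, holds for any pair of dual connections and is purely a consequence of \eqref{eq:dual_connection} and $\nabla^{**} = \nabla$; I would present that computation first, in the short displayed form above, and then deduce the first-slot symmetry from the torsionless assumption.
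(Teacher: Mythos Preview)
Your argument for the second identity is correct and, in fact, your first route via $\nabla^{**}=\nabla$ is slightly cleaner than the paper's: the paper substitutes \eqref{eq:sum_dual} into \eqref{eq:dual_connection} and cancels against the metricity of $\lc$, which is exactly your alternative derivation; your subtraction of the two dual relations gets there in one line without invoking $\lc$ at all.

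For the first claim, however, there is a genuine gap. You correctly observe that $D(X,Z)-D(Z,X)=T^{\nabla^*}(X,Z)-T^{\nabla}(X,Z)$, so the symmetry of $D$ in its first two slots reduces to $T^{\nabla^*}=0$. But the standing hypothesis in the paper is only that $\nabla$ is torsionless; the torsionlessness of $\nabla^*$ is not assumed and must be proved. Your parenthetical ``$\nabla^{*}=\lc+\tfrac12 D$ has vanishing torsion because $\lc$ does and $D$ is symmetric'' is circular, since symmetry of $D$ is precisely what is at stake, and the decomposition \eqref{eq:sum_dual} itself already presupposes $\tfrac12(\nabla+\nabla^*)=\lc$, which is equivalent to $T^{\nabla^*}=0$ once $T^{\nabla}=0$. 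The paper closes this gap with a separate Lemma~\ref{lem:dual_torsion}, established by a Koszul-type manipulation of three instances of \eqref{eq:dual_connection} that yields $\met{X}{\nabla_Y^*Z-\nabla_Z^*Y}=\met{X}{[Y,Z]}$. You should either reproduce that computation or give some other independent argument that $\nabla^*$ is torsionless before invoking your torsion identity.
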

  \begin{lem}
  \label{lem:dual_torsion}
  If $\nabla$ is torsionless, then so is $\nabla^*$.
 \end{lem}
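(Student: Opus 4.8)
The plan is to extract the torsion of $\nabla^*$ directly from the decomposition \ref{eq:sum_dual}. Since $\nabla$ is torsionless by hypothesis, \ref{eq:sum_dual} applies and we may write $\nabla^* = \lc + \tfrac{1}{2}D$, where $\lc$ is the Levi--Civita connection of $g$ and $D = \nabla^*-\nabla$ is the $(2,1)$-tensor of Proposition \ref{prop:diff_nabla}, its first slot being the direction of differentiation. Everything will then reduce to the symmetry of $D$ in its first two arguments recorded in that proposition.

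Concretely, I would expand the torsion of $\nabla^*$ from its definition: for vector fields $X,Y$,
\[
T^{\nabla^*}(X,Y) = \nabla^*_X Y - \nabla^*_Y X - [X,Y].
\]
Inserting $\nabla^*_X Y = \lc_X Y + \tfrac{1}{2}D(X,Y)$ and regrouping gives
\[
T^{\nabla^*}(X,Y) = \bigl(\lc_X Y - \lc_Y X - [X,Y]\bigr) + \tfrac{1}{2}\bigl(D(X,Y) - D(Y,X)\bigr).
\]
The first parenthesis is the torsion of the Levi--Civita connection and hence vanishes; the second parenthesis vanishes because, by Proposition \ref{prop:diff_nabla}, $D(X,Y)=D(Y,X)$. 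Therefore $T^{\nabla^*}\equiv 0$, which is the assertion.

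An equivalent way to phrase this, not even using \ref{eq:sum_dual}, is to observe straight from $D=\nabla^*-\nabla$ that $T^{\nabla^*}(X,Y)-T^{\nabla}(X,Y) = D(X,Y)-D(Y,X)$: the difference of the two torsion tensors is exactly the antisymmetrization of $D$ in its first two slots, which is zero by Proposition \ref{prop:diff_nabla}, and since $T^{\nabla}=0$ by hypothesis we conclude $T^{\nabla^*}=0$.

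There is no real obstacle here, and it is worth being honest about why: this lemma and the first assertion of Proposition \ref{prop:diff_nabla} are two faces of the same fact — the symmetry of the difference tensor $D=\nabla^*-\nabla$ in its first two arguments is, granting that $\nabla$ is torsionless, logically equivalent to the torsionlessness of $\nabla^*$ (one checks this by pairing $T^{\nabla^*}(X,Y)$ with a test vector field $Z$ and using \ref{eq:dual_connection}). So the substantive work has already been done in establishing Proposition \ref{prop:diff_nabla}; the only thing demanding attention in the present proof is bookkeeping, namely tracking which argument of $D$ is the differentiation direction and recalling that \ref{eq:sum_dual} is available precisely because the hypothesis ``$\nabla$ torsionless'' is in force.
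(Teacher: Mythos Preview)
Your argument is circular. You invoke Proposition~\ref{prop:diff_nabla} (the symmetry $D(X,Y)=D(Y,X)$) to conclude that $T^{\nabla^*}=0$, but in the paper that very symmetry is \emph{deduced from} Lemma~\ref{lem:dual_torsion}: look at the proof of the proposition, whose first line reads ``The first claim is a consequence of $\nabla$ being torsionless and lemma~\ref{lem:dual_torsion}.'' Your own second formulation makes the circularity transparent: you correctly compute $T^{\nabla^*}(X,Y)-T^{\nabla}(X,Y)=D(X,Y)-D(Y,X)$, and this identity shows that, under the hypothesis $T^{\nabla}=0$, the two statements ``$\nabla^*$ is torsionless'' and ``$D$ is symmetric in its first two arguments'' are literally the same assertion. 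Neither can be used to prove the other; one of them needs an independent argument. The same objection applies to your appeal to \eqref{eq:sum_dual}: the identity $\nabla+\nabla^*=2\lc$ already encodes $T^{\nabla^*}=0$ (the average $\tfrac12(\nabla+\nabla^*)$ is always metric, so it equals $\lc$ precisely when it is torsionless, i.e.\ when $T^{\nabla}+T^{\nabla^*}=0$).

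The paper's route is different: it supplies that independent argument by a Koszul-type computation starting from the defining relation \eqref{eq:dual_connection}. Writing out $Y(\met{X}{Z})$, $Z(\met{X}{Y})$, $X(\met{Y}{Z})$ via \eqref{eq:dual_connection}, taking the alternating sum, and using only $T^{\nabla}=0$ together with the Koszul formula for $\lc$, one isolates $\met{X}{\nabla_Y^*Z-\nabla_Z^*Y}$ and finds it equals $\met{X}{[Y,Z]}$. You even gesture at this (``pairing $T^{\nabla^*}(X,Y)$ with a test vector field $Z$ and using \eqref{eq:dual_connection}''), which is exactly the right idea---but that is the proof you need to \emph{write}, not cite as a side remark. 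As it stands, your proposal assumes what it sets out to prove.
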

 \begin{proof}
  Only a sketch of the proof is given here. The starting point is the same as for establishing Koszul formula:
  \begin{align*}
&   Y\left( \met{X}{Z} \right) = \met{\nabla_Y X}{Z} + \met{X}{\nabla_Y^*Z} \\
&   Z\left( \met{X}{Y} \right) = \met{\nabla_Z X}{Y} + \met{X}{\nabla_Z^*Y} \\
&   X\left( \met{Y}{Z} \right) = \met{\nabla_X Y}{Z} + \met{Y}{\nabla_X^*Z} \\
  \end{align*}
  It comes:
  \[
    \begin{split}
      & Y\left( \met{X}{Z} \right) - Z \left( \met{X}{Y} \right) + X\left( \met{Y}{Z} \right) = \\
      & -\met{\left( \nabla_X + \nabla_X^* \right)Z}{Y} + \met{[Y,X]}{Z} - \met{[Z,X]}{Y} \\
      & + \met{X}{\nabla_Y^* Z -\nabla_Z^*Y}
    \end{split}
    \]
    since $\nabla+\nabla^*=2 \lc$, Koszul formula yields:
    \[
      \met{X}{\nabla_Y^* Z -\nabla_Z^*Y} = \met{X}{[Y,Z]}
      \]
    and the lemma follows.
 \end{proof}
\begin{proof}
  The first claim is a consequence of $\nabla$ being torsionless and lemma: \ref{lem:dual_torsion}
  \[
    D(X,Y) = \nabla^*_X Y - \nabla_X Y = \nabla^*_Y X + [X,Y] - \nabla_Y X - [X,Y] = D(Y,X)
    \]
  For the second, the starting point is equation \ref{eq:dual_connection} rewritten with the expressions from equation \ref{eq:sum_dual}:
  \[
     Z\left( \met{X}{Y}\right) = \met{\lc_Z X}{Y}  + \met{X}{\lc_Z Y} - \frac{1}{2}\met{D(Z, X)}{Y} + \frac{1}{2}\met{X}{D(Z,Y)}
    \]
  Using the defining property of the Levi-Civita connection:
  \[
    \met{D(Z, X)}{Y} - \met{X}{D(Z,Y)} = 0
  \]
  and the claim follows.
\end{proof}
\begin{prop}
\label{prop:t_tensor}
The tensor:
\[
T \colon (X,Y,Z) \mapsto \met{D(Z,X)}{Y}
\]
is totally symmetric. Futhermore, $T(X,Y,Z) = (\nabla_Z g)\left(X,Y\right)
$
\end{prop}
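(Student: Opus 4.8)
The plan is to read off the total symmetry of $T$ directly from Proposition~\ref{prop:diff_nabla}, and then to identify $T$ with $\nabla g$ by a single application of the defining relation~\eqref{eq:dual_connection}. Both objects are manifestly tensorial ($T$ is built from the $(2,1)$-tensor $D$ and the metric $g$, while $\nabla g$ is the covariant derivative of a tensor field), so it suffices to verify the stated identities pointwise on arbitrary vector fields.

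First I would establish total symmetry by producing two transpositions that generate the symmetric group on the three slots. Swapping the first two arguments: by the second assertion of Proposition~\ref{prop:diff_nabla} and the symmetry of $g$, $\met{D(Z,X)}{Y} = \met{X}{D(Z,Y)} = \met{D(Z,Y)}{X}$, so $T(X,Y,Z) = T(Y,X,Z)$. Swapping the first and last arguments: since $D$ is symmetric in its two covariant slots (first assertion of Proposition~\ref{prop:diff_nabla}), $\met{D(Z,X)}{Y} = \met{D(X,Z)}{Y}$, hence $T(X,Y,Z) = T(Z,Y,X)$. The transpositions $(X\,Y)$ and $(X\,Z)$ generate all permutations of the three arguments, so $T$ is totally symmetric.

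Next, for the formula $T(X,Y,Z) = (\nabla_Z g)(X,Y)$, I would expand the right-hand side from the definition of the covariant derivative of a $(0,2)$-tensor,
\[
(\nabla_Z g)(X,Y) = Z\!\left(\met{X}{Y}\right) - \met{\nabla_Z X}{Y} - \met{X}{\nabla_Z Y},
\]
and substitute $Z\!\left(\met{X}{Y}\right) = \met{\nabla_Z X}{Y} + \met{X}{\nabla_Z^{*}Y}$ from~\eqref{eq:dual_connection}. The terms $\met{\nabla_Z X}{Y}$ cancel, and since $D(Z,Y) = \nabla_Z^{*}Y - \nabla_Z Y$ one is left with $(\nabla_Z g)(X,Y) = \met{X}{D(Z,Y)}$, which equals $\met{D(Z,X)}{Y} = T(X,Y,Z)$ once more by Proposition~\ref{prop:diff_nabla}.

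I do not anticipate a genuine obstacle; the only care needed is bookkeeping — keeping the connection-direction slot of $D$ straight so that the two symmetry statements of Proposition~\ref{prop:diff_nabla} are applied to the intended pair of arguments. As an internal consistency check, the two halves of the proposition corroborate each other: the symmetry of $T$ in $X$ and $Y$ reflects that $\nabla_Z g$ is a symmetric bilinear form, while its symmetry in $Z$ and $X$ is exactly the symmetry of $D$ in its covariant arguments transported through the identity $T(X,Y,Z) = (\nabla_Z g)(X,Y)$.
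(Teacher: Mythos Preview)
Your proposal is correct and follows essentially the same approach as the paper: the total symmetry is read off from the two symmetry statements in Proposition~\ref{prop:diff_nabla}, and the identification with $\nabla_Z g$ is obtained by expanding the covariant derivative and invoking the duality relation~\eqref{eq:dual_connection}. The only cosmetic difference is that the paper applies~\eqref{eq:dual_connection} in the form $Z(\met{X}{Y}) = \met{\nabla_Z^* X}{Y} + \met{X}{\nabla_Z Y}$ (using $\nabla^{**}=\nabla$), which yields $\met{D(Z,X)}{Y}$ directly without the extra appeal to Proposition~\ref{prop:diff_nabla} that you make at the end.
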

\begin{proof}
  The symmetry comes from the one of $D$. For the second part of the proposition:
  \[
  \begin{split}
    (\nabla_Z g)\left(X,Y\right) & = Z\left(\met{X}{Y}\right) - \met{\nabla_Z X}{Y} - \met{X,\nabla_Z}{Y}  \\
    & = \met{\nabla_Z^*X}{Y} - \met{\nabla_Z X}{Y} \\
    & = \met{D(Z,X)}{Y}.
  \end{split}
  \]
\end{proof}
 
Given a torsionless connection $\nabla$, a $(1,1)$-tensor $\theta$ is said to satisfy the gauge equation if for all vector fields 
$X,Y$:
\begin{equation}
  \label{eq:gauge}
  \nabla^*_X \theta Y = \theta \nabla_X Y
\end{equation}
Equivalently, using the tensor $D$:
\begin{align}
  & \nabla \theta = - \left( D\otimes 1 \right) \theta  \label{eq:gauge_1_d}\\
  & \nabla^* \theta = - \left(  1\otimes  D\right)\theta \label{eq:gauge_d_1}\\ 
  & \left( \lc + \frac{1}{2} \left( 1 \otimes D + D \otimes 1 \right) \right) \theta = 0 \label{eq:gauge_parallel}
\end{align}
with:
\[
(D \otimes 1)(\theta)(X,Y) = D\left( X,\theta Y\right), \, (1\otimes D)(\theta)(X,Y) = \theta D(X,Y).\]

When $\nabla=\lc$, equation \ref{eq:gauge_parallel} yields: $\lc \theta = 0$. In this case,
 equation \ref{eq:gauge_parallel} indicates that local solutions exist provided the conditions of \cite{ATKINS2011310} are satisfied.
In coordinates, the gauge equation becomes, with
Einstein convention of summation on repeated indices:
\begin{equation}
  \label{eq:gauge_coord}
  \partial_k \theta_i^j = \Gamma_{ik}^b \theta_b^j - \Gamma_{ak}^j \theta_i^a - \theta_i^a D_{ak}^j
\end{equation}
where the $\Gamma_{ij}^k$ are the Christoffel symbols of $\nabla$. 
It is convenient to use an orthonormal frame $\left( X_1,\dots ,X_n \right)$
and its associated coframe $\left( \omega^1=X_1^\flat, \dots, \omega^n=X_n^\flat \right)$ to represent the tensor $D$:
\begin{equation}
  \label{eq:d_coefficients}
  D_{ij}^k =  \Gamma_{ij}^k + \Gamma_{ik}^j
\end{equation}
where all the coefficients are expressed in the orthonormal frame/coframe, that is:
\[
  D =  D_{ij}^k X_k\otimes\omega^i\otimes\omega^j
\]

\begin{defn}
  \label{def:tensor_adjoint}
  Let $\theta$ be a $(1,1)$-tensor. Its adjoint $\theta^*$ is defined, for all vector fields $X,Y$, 
  by the relation:
  \[
    \met{\theta X}{Y} = \met{X}{\theta^* Y}
    \]
\end{defn}
\begin{prop}
  \label{prop:gauge_ed_dual_theta}
  If $\theta$ is a solution of the gauge equation for $\nabla$, then so is its adjoint $\theta^*$.
\end{prop}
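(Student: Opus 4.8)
The plan is to verify the gauge equation for $\theta^{*}$, namely $\nabla^{*}_{X}\theta^{*}Y=\theta^{*}\nabla_{X}Y$, by pairing both sides with an arbitrary vector field $Z$ and playing the defining relation \ref{eq:dual_connection} of the dual connection against itself. The only ingredients needed are \ref{eq:dual_connection}, the involutivity $\nabla^{**}=\nabla$ recorded after it, the adjointness relation of Definition \ref{def:tensor_adjoint}, and the gauge equation \ref{eq:gauge} for $\theta$.

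First I would expand $\met{\nabla^{*}_{X}\theta^{*}Y}{Z}$ using \ref{eq:dual_connection} with $\nabla$ replaced by $\nabla^{*}$ (so that $\nabla^{*}$ sits in the first slot and $\nabla^{**}=\nabla$ in the second), obtaining
\[
\met{\nabla^{*}_{X}\theta^{*}Y}{Z}=X\met{\theta^{*}Y}{Z}-\met{\theta^{*}Y}{\nabla_{X}Z}=X\met{Y}{\theta Z}-\met{Y}{\theta\nabla_{X}Z},
\]
where the second equality just moves $\theta^{*}$ across the metric via Definition \ref{def:tensor_adjoint}. Next I would expand $X\met{Y}{\theta Z}$ by \ref{eq:dual_connection}, this time with $\nabla$ in the first slot and $\nabla^{*}$ in the second, to get $\met{\nabla_{X}Y}{\theta Z}+\met{Y}{\nabla^{*}_{X}\theta Z}$, and then invoke the gauge equation \ref{eq:gauge} for $\theta$ to rewrite $\nabla^{*}_{X}\theta Z=\theta\nabla_{X}Z$. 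Substituting back, the two occurrences of $\met{Y}{\theta\nabla_{X}Z}$ cancel, leaving $\met{\nabla_{X}Y}{\theta Z}=\met{\theta^{*}\nabla_{X}Y}{Z}$. Since $Z$ is arbitrary and $g$ is nondegenerate, this yields $\nabla^{*}_{X}\theta^{*}Y=\theta^{*}\nabla_{X}Y$, which is exactly \ref{eq:gauge} for $\theta^{*}$.

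There is no genuine obstacle here; the one point demanding care is bookkeeping — ensuring that the correct member of the dual pair $(\nabla,\nabla^{*})$ occupies each slot of \ref{eq:dual_connection} at every step, and remembering to use $\nabla^{**}=\nabla$ in the first expansion. An alternative route would pass through the tensorial forms \ref{eq:gauge_1_d}--\ref{eq:gauge_d_1}, after first checking that the $g$-adjoint of a $(1,1)$-tensor interchanges $\nabla$- and $\nabla^{*}$-differentiation and then using Proposition \ref{prop:diff_nabla} for the self-adjointness of $D(X,\cdot)$; but the direct computation above is shorter and I would present that one.
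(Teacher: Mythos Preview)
Your argument is correct and actually more streamlined than the paper's. The paper does not pair $\nabla^{*}_{X}\theta^{*}Y$ with a test field directly; instead it computes $\met{(\nabla^{*}_{Z}\theta)X}{Y}$ in two ways---once by expanding the tensor derivative and moving $\theta$ across $g$ to obtain $\met{X}{\nabla_{Z}\theta^{*}Y}-\met{X}{\theta^{*}\nabla_{Z}Y}$, and once via the reformulation $\nabla^{*}_{Z}\theta=-\theta D(Z,\cdot)$ together with the self-adjointness of $D(Z,\cdot)$ from Proposition~\ref{prop:diff_nabla}. Equating the two and unwinding $D=\nabla^{*}-\nabla$ then yields the gauge equation for $\theta^{*}$. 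Your route avoids the detour through $D$ altogether: by applying \ref{eq:dual_connection} twice with the roles of $\nabla,\nabla^{*}$ swapped and invoking the gauge equation for $\theta$ just once, the cancellation is immediate. What the paper's approach buys is that it makes visible the link to the $D$-formulations \ref{eq:gauge_1_d}--\ref{eq:gauge_d_1} used elsewhere; what yours buys is brevity and independence from Proposition~\ref{prop:diff_nabla}.
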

\begin{proof}
  For any vector fields $X,Y,Z$:
  \begin{align}
    \met{(\nabla_Z^* \theta)X}{Y} & = \met{\nabla_Z^*(\theta X)}{Y}-\met{\theta \nabla_Z^* X}{Y} \\
    & = Z\left( \met{\theta X}{Y} \right) - \met{\theta X}{\nabla_Z^Y}-\met{\theta \nabla_Z^* X}{Y} \\
    &=  Z\left( \met{X}{\theta^* Y} \right) - \met{X}{\theta^* \nabla_Z  Y}-\met{\nabla_Z^* X}{\theta^*Y} \\
    & = \met{X}{\nabla_Z\theta^*Y}-\met{X}{\theta^*\nabla_ZY}
  \end{align}
  Since $\theta$ satisfies the gauge equation, $\nabla_Z^* \theta = - \theta D(Z,.)$, thus:
\[
\met{(\nabla_Z^* \theta)X}{Y} = - \met{\theta D(Z,X)}{Y} = -\met{D(Z,X)}{\theta^*Y}=
-\met{X}{D\left( Z,\theta^*Y \right)}
\]
and so:
\begin{align}
  0 = &\met{X}{D\left( Z,\theta^*Y \right)} + \met{X}{\nabla_Z\theta ^*Y}-\met{X}{\theta^*\nabla_ZY}\\
  & = \met{X}{\nabla_Z^* \theta^*Y} - \met{X}{\nabla_Z \theta^*Y} + \met{X}{\nabla_Z\theta^*Y}-\met{X}{\theta^*\nabla_Z^*Y}\\
  & = \met{X}{\nabla_Z^* \theta^*Y}-\met{X}{\theta^*\nabla_Z Y}
\end{align}
This equation implies in turn the required property:
\[
  \nabla_Z^* \theta^* Y = \theta^* \nabla_Z Y
  \]
\end{proof}
\begin{rem}
  \label{rem:adjoint_petersen}
  This proposition generalizes theorem 10.3.2 in \cite{petersen2006riemannian}. It implies that if a tensor
  is a solution of the gauge equation, so are its symmetric and skew-symmetric parts. 
\end{rem}
\begin{prop}
  \label{prop:theta_parallel}
  Let $\theta$ be a skew-symmetric solution of the gauge equation. Let the tensor $p_\theta$ be defined for all vector fields $X,Y$ by:
  \[
  p_\theta(X,Y) = \met{\theta X}{Y}
\]
Then $p$ is $\nabla$ parallel, or equivalently, for any vector fields $X,Y,Z$:
\[
\left(\nabla_Z^* g\right)(\theta X, Y) = \met{\left(\nabla_Z \theta\right)X}{Y}
\]
\end{prop}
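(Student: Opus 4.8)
The plan is to reduce the statement entirely to the duality relation~\eqref{eq:dual_connection}, the gauge equation~\eqref{eq:gauge}, and the self-adjointness of $D(Z,\cdot)$ recorded in Proposition~\ref{prop:diff_nabla}. No curvature or integrability input is needed.

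First I would expand the covariant derivative of the $(0,2)$-tensor $p_\theta$: for vector fields $X,Y,Z$,
\[
(\nabla_Z p_\theta)(X,Y) = Z\!\left(\met{\theta X}{Y}\right) - \met{\theta\nabla_Z X}{Y} - \met{\theta X}{\nabla_Z Y}.
\]
Applying~\eqref{eq:dual_connection} to the first term with $\theta X$ in the first slot gives $Z(\met{\theta X}{Y}) = \met{\nabla_Z(\theta X)}{Y} + \met{\theta X}{\nabla_Z^* Y}$. Substituting and collecting the terms acting on $X$ into $(\nabla_Z\theta)X = \nabla_Z(\theta X) - \theta\nabla_Z X$ and those acting on $Y$ into $D(Z,Y) = \nabla_Z^* Y - \nabla_Z Y$ yields the identity
\[
(\nabla_Z p_\theta)(X,Y) = \met{(\nabla_Z\theta)X}{Y} + \met{\theta X}{D(Z,Y)},
\]
valid for an arbitrary $(1,1)$-tensor $\theta$. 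By Proposition~\ref{prop:diff_nabla}, $\met{\theta X}{D(Z,Y)} = \met{D(Z,\theta X)}{Y}$, and Proposition~\ref{prop:t_tensor} together with $\nabla^* g = -\nabla g$ (a consequence of $\nabla+\nabla^*=2\lc$ and $\lc g = 0$) identifies this with $-(\nabla_Z^* g)(\theta X, Y)$. Hence
\[
(\nabla_Z p_\theta)(X,Y) = \met{(\nabla_Z\theta)X}{Y} - (\nabla_Z^* g)(\theta X,Y),
\]
which already proves the equivalence of the two formulations in the statement: $p_\theta$ is $\nabla$-parallel precisely when $(\nabla_Z^* g)(\theta X,Y) = \met{(\nabla_Z\theta)X}{Y}$ for all $X,Y,Z$.

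It then remains to see that the right-hand side vanishes, and here the gauge hypothesis enters. Rewriting~\eqref{eq:gauge} through $D$ as in~\eqref{eq:gauge_1_d} gives $(\nabla_Z\theta)X = -D(Z,\theta X)$, so $\met{(\nabla_Z\theta)X}{Y} = -\met{D(Z,\theta X)}{Y} = -\met{\theta X}{D(Z,Y)}$ again by Proposition~\ref{prop:diff_nabla}; this cancels the second term in the first displayed identity and gives $\nabla_Z p_\theta = 0$. Alternatively, one may argue directly from skew-symmetry: using $\met{\theta\cdot}{\cdot} = -\met{\cdot}{\theta\cdot}$ in all three terms of $(\nabla_Z p_\theta)(X,Y)$, applying~\eqref{eq:dual_connection} to $Z(\met{X}{\theta Y})$, and substituting the gauge equation in the form $\nabla_Z^*(\theta Y) = \theta\nabla_Z Y$ makes everything cancel.

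I do not expect a genuine obstacle: the whole argument is bookkeeping with~\eqref{eq:dual_connection} and the symmetry of $D$. The one delicate point is keeping straight which of $\nabla$, $\nabla^*$ is applied to which argument when invoking the duality relation, and citing Proposition~\ref{prop:t_tensor} with the correct sign for $\nabla^* g$; a slip in either place destroys the cancellation that makes $p_\theta$ parallel.
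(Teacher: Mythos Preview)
Your proof is correct and uses the same ingredients as the paper's---the duality relation \eqref{eq:dual_connection} and the gauge equation rewritten through $D$. The only difference is organizational: the paper proves $\nabla p_\theta=0$ and the $\nabla^* g$ identity separately (for the first it applies duality in the form $Z\bigl(\met{\theta X}{Y}\bigr)=\met{\nabla_Z^*\theta X}{Y}+\met{\theta X}{\nabla_Z Y}$, so the gauge equation $\nabla_Z^*\theta X=\theta\nabla_Z X$ finishes in one line), whereas you first establish the general identity $(\nabla_Z p_\theta)(X,Y)=\met{(\nabla_Z\theta)X}{Y}-(\nabla_Z^* g)(\theta X,Y)$ and then read off both claims and their equivalence from it. Your route has the small advantage of actually justifying the word ``equivalently'' in the statement; note also that neither argument uses the skew-symmetry hypothesis.
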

\begin{proof}
  For any vector fields $X,Y,Z$:
  \[
  \begin{split}
      \left(\nabla_Z p_\theta\right)(X,Y) & = Z\left(p_\theta(X,Y)\right) - p_\theta\left(\nabla_Z X,Y\right)
      - p_\theta\left(X,\nabla_Z Y\right) \\
      & = \met{\nabla_Z^* \theta X}{Y} + \met{\theta X}{\nabla_Z Y} - \met{\theta \nabla_Z X}{Y}
      - \met{\theta X}{\nabla_Z Y} \\
      &= \met{\left(\nabla^*_Z \theta - \theta \nabla_Z\right)X}{Y} = 0
  \end{split}
  \]
  On the other hand:
  \[
  \begin{split}
     \left(\nabla_Z^* g\right)(\theta X,Y)  & = Z\left(\met{\theta X}{Y}\right) - \met{\nabla_Z^* \theta X}{Y}
     - \met{\theta X}{\nabla_Z^* Y} \\
     & = \met{\nabla_Z \theta X}{Y} - \met{\nabla_Z^* \theta X}{\nabla_Z Y} \\
     & = -\met{D(Z,\theta X)}{Y}
  \end{split}
  \]
  and by the gauge equation:
  \[
  -\met{D(Z,\theta X)}{Y} = \met{(\nabla_Z \theta)X}{Y}
  \]
  proving the second assertion.
\end{proof}
\begin{cor}
  let $\theta$ be a solution of the gauge equation. Then the next two conditions are equivalent.
  \begin{enumerate}
      \item $\nabla\theta=0$
      \item $\nabla$ is metric connection, for the metric g.
  \end{enumerate}
\end{cor}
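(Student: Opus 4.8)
The plan is to push both conditions through the difference tensor $D=\nabla^{*}-\nabla$. First note that, by definition, ``$\nabla$ is a metric connection for $g$'' means exactly $\nabla g=0$. By Proposition~\ref{prop:t_tensor} we have $(\nabla_{Z}g)(X,Y)=\met{D(Z,X)}{Y}$ for all vector fields $X,Y,Z$, and since $g$ is non-degenerate this vanishes identically if and only if $D\equiv 0$. Hence condition (2) is equivalent to $D=0$.

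Next I would use the equivalent form \ref{eq:gauge_1_d} of the gauge equation, $\nabla\theta=-(D\otimes 1)\theta$, which written out is $(\nabla_{Z}\theta)Y=-D(Z,\theta Y)$ for all vector fields $Y,Z$. (One can also obtain this by repeating the computation in the proof of Proposition~\ref{prop:theta_parallel}: since $p_{\theta}$ is $\nabla$-parallel there, one gets $\met{(\nabla_{Z}\theta)X}{Y}=-(\nabla_{Z}g)(\theta X,Y)$, which says the same thing.) Consequently $\nabla\theta=0$ holds if and only if $D(Z,\theta Y)=0$ for all $Y,Z$, i.e. $D$ vanishes whenever its last argument lies in $\mathrm{Im}\,\theta$.

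With these two reformulations in hand the equivalence is short. For $(2)\Rightarrow(1)$: if $\nabla$ is metric then $D=0$ by the first step, so $D(Z,\theta Y)=0$ and thus $\nabla\theta=0$ by \ref{eq:gauge_1_d}. For $(1)\Rightarrow(2)$: $\nabla\theta=0$ gives $D(Z,\theta Y)=0$ for all $Y,Z$; since $\theta$ has maximal rank it is a bundle isomorphism, so $\theta Y$ runs over all of $TM$, whence $D(Z,W)=0$ for all $Z,W$, that is $D=0$, which is condition (2).

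The only point that requires attention is the surjectivity of $\theta$ invoked in $(1)\Rightarrow(2)$: without the maximal-rank hypothesis one only concludes that $D$ vanishes on $\mathrm{Im}\,\theta$ in its last slot, which is strictly weaker than $\nabla g=0$ --- for instance $\theta=0$ solves the gauge equation and has $\nabla\theta=0$ for any $\nabla$ --- so the statement must be read with $\theta$ of maximal (full) rank. Under that reading everything reduces to the two substitutions above, and there is no genuine obstacle.
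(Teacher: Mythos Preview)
Your approach is essentially the paper's own: both proofs reduce to the single identity
\[
g\big((\nabla_Z\theta)X,\,Y\big)\;=\;-(\nabla_Z g)(\theta X,\,Y),
\]
which the paper reads off from the second assertion of Proposition~\ref{prop:theta_parallel} together with $\nabla^*_Z g=-\nabla_Z g$, and which you obtain equivalently from \eqref{eq:gauge_1_d} and Proposition~\ref{prop:t_tensor}. The paper then simply declares the equivalence proved, so your write-up is in fact the more careful of the two.

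Your final paragraph is not just a side remark but a genuine correction: from $\nabla\theta=0$ the identity only yields $(\nabla_Z g)(\theta X,Y)=0$, i.e.\ $D$ vanishes on $\mathrm{Im}\,\theta$ in one slot, and this forces $D=0$ only when $\theta$ is surjective. The paper's proof has exactly the same gap and does not address it. Your counterexample $\theta=0$ already shows the statement is false as written; one can also build a rank-$2n$ skew-symmetric counterexample on a $(2n{+}1)$-manifold by taking $D$ supported on $\ker\theta\otimes\ker\theta\otimes\ker\theta$ (total symmetry of $T$ in Proposition~\ref{prop:t_tensor} allows this). So your reading ``$\theta$ of full rank'' is the right hypothesis to add --- with the caveat that for a \emph{skew-symmetric} $\theta$ this is only possible in even dimension.
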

\begin{proof}
  By using the second assertion of Proposition 3.4, we have \[
\left(-\nabla_Z g\right)(\theta X, Y)=\left(\nabla_Z^* g\right)(\theta X, Y) = \met{\left(\nabla_Z \theta\right)X}{Y}
\]
The proposition is demonstrated.
\end{proof}  
\begin{rem}
  In the case of torsionless dual connections, $\nabla$ is exactly the Levi-Civita connection of the metric g. 
\end{rem}
\begin{cor}
   Let $\theta$ be a solution of the gauge equation of dual torsionless connections. The tensor $p_\theta$ is closed and $\nabla$-coclosed.
\end{cor}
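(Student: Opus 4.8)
The plan is to obtain both assertions from Proposition \ref{prop:theta_parallel} together with two standard facts about torsionless connections. First I would reduce to the case of a skew-symmetric $\theta$: by Remark \ref{rem:adjoint_petersen} the skew-symmetric part $\tfrac12(\theta-\theta^*)$ of any gauge solution is again a gauge solution, and the alternating part of the bilinear form $(X,Y)\mapsto\met{\theta X}{Y}$ only sees this skew-symmetric part, so there is no loss of generality in assuming $\theta=-\theta^*$, in which case $p_\theta$ is a genuine $2$-form. Proposition \ref{prop:theta_parallel} then gives the key input $\nabla p_\theta=0$.

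For closedness I would invoke the identity expressing the exterior derivative of a $k$-form $\alpha$ through any torsion-free connection:
\[
d\alpha(X_0,\dots,X_k)=\sum_{i=0}^{k}(-1)^i\,(\nabla_{X_i}\alpha)(X_0,\dots,\widehat{X_i},\dots,X_k).
\]
Here $\nabla$ is torsionless by hypothesis (and, by Lemma \ref{lem:dual_torsion}, so is $\nabla^*$), so this applies with $\alpha=p_\theta$; since $\nabla p_\theta=0$, every term on the right vanishes and $d p_\theta=0$.

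For $\nabla$-coclosedness I would use the definition of the $\nabla$-codifferential as the $g$-trace of the covariant derivative: in a local $g$-orthonormal frame $(X_1,\dots,X_n)$,
\[
(\delta^{\nabla}p_\theta)(Y)=-\sum_{a=1}^{n}(\nabla_{X_a}p_\theta)(X_a,Y).
\]
Once more $\nabla p_\theta=0$ forces every summand to vanish, so $\delta^{\nabla}p_\theta=0$, i.e. $p_\theta$ is $\nabla$-coclosed. (If one additionally knows $\nabla$ is metric, this $\delta^{\nabla}$ coincides with the Hodge codifferential of $g$, but that is not needed here.)

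There is no substantial obstacle: the corollary is bookkeeping on top of Proposition \ref{prop:theta_parallel}. The only points deserving a word of care are (i) passing to the skew-symmetric part so that $p_\theta$ is honestly a differential form, and (ii) being explicit that the identity $d=\mathrm{Alt}\circ\nabla$ is legitimate precisely because the connections in play are torsion-free — this is where the ``torsionless'' hypothesis on the dual pair is genuinely used — whereas the coclosedness statement requires nothing beyond the definition of $\delta^{\nabla}$ as a trace of $\nabla$.
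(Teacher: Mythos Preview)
Your argument is correct and matches the paper's own proof: both derive closedness and $\nabla$-coclosedness directly from $\nabla p_\theta=0$ (Proposition~\ref{prop:theta_parallel}) via the standard identities $d=\mathrm{Alt}\circ\nabla$ for torsion-free $\nabla$ and $\delta^{\nabla}=-\mathrm{tr}_g\nabla$. The only addition on your side is the explicit reduction to skew-symmetric $\theta$, which the paper leaves implicit (since $p_\theta$ was introduced only for skew-symmetric solutions); this is a harmless clarification.
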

\begin{proof}
  For a torsionless connection $\nabla$ and a $k$-form $\omega$:
  \[
    d\omega_{\theta}\left(X_0,\dots, X_k\right) = 
    \sum_{i=0}^k (-1)^i \left(\nabla_{X_i}\omega\right)\left(X_0,\dots,\hat{X}_i,\dots,X_k\right)
    \]
Since $\nabla p_{\theta} = 0$, the previous formula applied to $p_{\theta}$ shows that $dp_{\theta} = 0$. From \cite{opozda2015bochner}, the codifferential
relative to $\nabla$ acting on differential forms as follows:\[ \delta^{\nabla}\omega=-tr_{g}\nabla\omega    \]
the previous formula applied to $p_{\theta}$ shows that $\delta^{\nabla}p_{\theta}=0,$ then $p_{\theta}$ is $\nabla$-coclosed.
\end{proof}

\section{From Dual Connections to  Almost contact manifold  }
\subsection{\textbf{Gauge equation of dual connections .}}
\begin{thm}
\label{thm:cosymplectic_dual}
The following assertions are equivalent:
\begin{enumerate}
    \item $M$ of dimension $2n+1$ admits an almost cosymplectic structure(almost contact structure),
    \item The gauge equation of dual connections on $M$ admits a skew-symmetric solution $\theta$ such that $\text{rank }\theta= 2n.$
\end{enumerate}
\end{thm}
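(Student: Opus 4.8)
The plan is to prove the two implications separately, using the $2$-form $p_\theta$ of Proposition \ref{prop:theta_parallel} as the bridge, and to invoke orientability of $M$ exactly once, in order to trivialise a line bundle.

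\emph{$(2)\Rightarrow(1)$.} Given a skew-symmetric solution $\theta$ of the gauge equation with $\text{rank }\theta=2n$, the relation $\theta^{*}=-\theta$ yields the orthogonal splitting $TM=\ker\theta\oplus\text{im }\theta$, with $\ker\theta$ a line bundle and $\text{im }\theta$ of rank $2n$, on which $\omega:=p_\theta$ restricts to a symplectic form (its radical being exactly $\ker\theta$). Then $\omega^{n}$ trivialises $\Lambda^{2n}(\text{im }\theta)^{*}$, so $w_{1}(\text{im }\theta)=0$, and from $\Lambda^{2n+1}TM\cong\Lambda^{2n}(\text{im }\theta)\otimes\ker\theta$ together with $w_{1}(M)=0$ the line bundle $\ker\theta$ is trivial. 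Choosing a nowhere-vanishing $\xi\in\Gamma(\ker\theta)$ with $g(\xi,\xi)=1$ and setting $\eta:=\xi^{\flat}$, one gets $\eta(\xi)=1$, $i_\xi\omega=g(\theta\xi,\cdot)=0$ and $\eta|_{\text{im }\theta}=0$, hence $\omega^{n}\wedge\eta\neq 0$: $(M,\omega,\eta)$ is an almost cosymplectic structure with Reeb field $\xi$. A polar decomposition $\theta|_{\text{im }\theta}=\phi_{0}(-\theta^{2})^{1/2}$ with $\phi_{0}$ orthogonal and $\phi_{0}^{2}=-\text{Id}$, extended by $0$ along $\xi$, produces $\phi$ with $\phi^{2}=-\text{Id}+\eta\otimes\xi$ for which $g$ is adapted, i.e. the reduction to $U(n)\times 1$ announced in the introduction.

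\emph{$(1)\Rightarrow(2)$.} One must exhibit a metric $g$, a torsionless connection $\nabla$ and a skew-symmetric $\theta$ of rank $2n$ with $\nabla^{*}_{X}\theta Y=\theta\nabla_{X}Y$ for $\nabla^{*}$ the $g$-dual of $\nabla$. Unwinding equation \eqref{eq:dual_connection}, this is equivalent to $\nabla p_\theta=0$, which in turn (since $\nabla$ is torsionless) forces $dp_\theta=0$; so the genuine task is to produce a \emph{closed} $2$-form of rank $2n$ on $M$. As $M$ is almost contact it admits a contact form $\alpha$ (by \cite{gromov1969stable} for open $M$, \cite{borman2015existence} in general), and $\omega:=d\alpha$ is closed of rank exactly $2n$ at every point because $\alpha\wedge(d\alpha)^{n}\neq 0$. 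By the Darboux normal form for closed $2$-forms of constant rank, around each point there is a chart in which $\omega$ has constant coefficients, so the coordinate connection there is torsionless and annihilates $\omega$; since both ``torsionless'' and ``$\nabla\omega=0$'' are affine conditions on connections, a partition of unity glues them to a global torsionless $\nabla$ with $\nabla\omega=0$. Taking any metric $g$ and defining $\theta$ by $\met{\theta X}{Y}=\omega(X,Y)$ gives a skew-symmetric $\theta$ of rank $2n$, and a short computation using the duality relation together with $\nabla\omega=0$ gives $\nabla^{*}_{X}\theta Y=\theta\nabla_{X}Y$.

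The one delicate point is the one just isolated: the gauge equation of torsionless dual connections is solvable only by an already-closed $2$-form, so $(1)\Rightarrow(2)$ cannot be obtained by transporting the given (generally non-closed) almost cosymplectic form, and one truly needs a closed $2$-form of maximal rank — which is where the contact existence theorems enter. (Were torsion allowed in the gauge equation this difficulty would vanish: from $\theta=\phi$ in an adapted almost contact metric structure, the same partition-of-unity construction applied to local frames in which $p_\phi$ has constant coefficients yields, with no closedness hypothesis, a connection $\nabla$ with $\nabla p_\phi=0$ whose $g$-dual solves the gauge equation.) The Darboux patching and the final check that $\nabla\omega=0$ implies the gauge equation are routine computations.
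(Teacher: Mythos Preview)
Your $(2)\Rightarrow(1)$ is essentially the paper's argument: both trivialise the line bundle $\ker\theta$ via orientability (the paper sets $\eta_\theta:={}^\star p_\theta^{\,n}$, you take the metric dual of a unit section of $\ker\theta$) and then verify that $(p_\theta,\eta_\theta)$ is almost cosymplectic.

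For $(1)\Rightarrow(2)$ you have read the statement as requiring \emph{torsionless} dual connections, and this drives the whole architecture of your proof. The paper does not intend this: Theorem~\ref{thm:cosymplectic_dual} sits in the subsection ``Gauge equation of dual connections'' while the torsionless case is singled out separately in Theorem~\ref{thm:gauge_dual_torsionless}, and the paper's own proof simply takes $\nabla$ to be a connection adapted to the $\text{Sp}(n,\R)\times 1$-reduction (so $\nabla\omega=0$, $\nabla\xi=0$), which exists for any $G$-structure but generically has torsion; it then reads off $\nabla^*_X\theta Y=\theta\nabla_X Y$ from $\nabla\omega=0$ exactly as in your final line. (The formal definition in Section~2 does say ``torsionless'', so the paper is not perfectly consistent here; the section headings and the proof itself resolve the ambiguity---indeed a torsionless $\nabla$ with $\nabla\omega=0$ would force $d\omega=0$, which is not assumed.) Your own parenthetical remark already contains, in outline, the paper's argument.

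Under your stricter reading your argument is correct, but it purchases far more than the theorem asks: you are really proving that every almost contact $(2n{+}1)$-manifold carries a \emph{closed} $2$-form of rank $2n$, and for that you invoke the full strength of the Gromov and Borman--Eliashberg--Murphy contact existence theorems. That statement sits strictly between Theorem~\ref{thm:cosymplectic_dual} and Theorem~\ref{thm:gauge_dual_torsionless} (the latter needs the further modular-class hypothesis to close $\eta$ as well), and is a genuinely deeper fact than what is being claimed here.
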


\begin{proof}
Let's prove the necessary part (1) implies (2): Assume that M admits an almost contact structure $(\omega,\eta)$,  there exists a vector field $\xi$ such that $i_{\xi}\omega=0\quad\text{and}\quad \eta(\xi)=1$. For all $x \in M$,  it exists an adapted frame $(X_{0},X_{1},..X_{n},\hat{X}_{1},.,\hat{X}_{n})$ of $T_{x}M$ such that $$X_{0}=\xi_{x}\quad\text{and}\quad (X_{1},..X_{n},\hat{X}_{1},..,\hat{X}_{n})\quad \text{is a symplectic basis of}\quad H=ker(\eta).
$$
The adapted coframe $\left( \alpha^0=X_0^\flat, \dots, \hat{\alpha}^{n}=\hat{X}_{n}^\flat \right)$ satisfy :
$$  \omega_{x}=\alpha^{1}\wedge\hat{\alpha^{1}}+.....+\alpha^{n}\wedge\hat{\alpha^{n} }\quad \text{and}\quad \eta_{x}=\alpha^{0}.$$ Let $(Y_{0},.,Y_{n},\hat{Y}_{1},...,\hat{Y}_{n})$ and $(X_{0},.,X_{n},\hat{X}_{1},...,\hat{X}_{n})$  be two adapted frames at x. we have 

\[   Y_{i}=C^{j}_{i}X_{j}   +D^{j}_{i}\hat{X}_{j}  \quad\text{and}\quad          \hat{Y}_{i}=-D^{j}_{i}X_{j}+C^{j}_{i}\hat{X}_{j}      \] 
where $C,D\in Gl(n,\mathbb{R}).$ Hence the two frames are related by the $(2n+1)\times(2n+1)$ matrix S:

\begin{center}
\[
\begin{pmatrix}C&D&0\\ 
-D&C&0\\ 
0&0&1\\ 
\end{pmatrix} 
\]
\end{center}
Since the structure group of $M$ is reducible to $\text{Sp}(n,\R) \times 1$, one can find a adapted connection $\nabla$ preserving $\omega,\xi$:
\[
\nabla\xi=0\quad\text{and}\quad \nabla\omega=0.
\] From \cite{blair2010riemannian}, to a almost cosymplectic structure $(\omega,\eta)$ there exists an almost contact metric structure $(\theta,\xi,\eta,g)$ on M with the same $\xi$ {and} $\eta$, whose fundamental 2-form $\Omega$ coincides with $\omega.$ we define a metric
g on M by $$g(X,Y)=g_{H}(X,Y),\quad g(X,\xi)=0,\quad g(\xi,\xi)=1,\quad \forall X,Y\in \Gamma(H).$$

 The (1, 1)-tensor $\theta:TM\longrightarrow TM$ is defined by:
$$\theta X=JX,\quad \theta\xi=0  \quad \forall X\in\Gamma(H) $$
where $J^{2}X=-Id_{H}$ where $Id_{H}$ denotes the identity map on $H$ and $g_{H}$ is a metric on H such that $$\Omega(X,Y)=g_{H}(JX,Y)\quad \forall X,Y\in \Gamma(H).$$  \\
We have $$ \omega(X,Y)=\Omega(X,Y)=g(\theta X,Y) \quad\text{and}\quad g(\theta X,Y)=-g(X,\theta Y)  $$

We have 
\[\nabla\omega=\nabla\Omega=0
\]
\[X.\Omega(Y,Z)-\Omega(\nabla_{X}Y,Z)-\Omega(Y,\nabla_{X}Z)=0
\]

\[X.g(\theta Y,Z)-g(\theta\nabla_{X}Y,Z)-g(\theta Y,\nabla_{X}Z)=0
\]
By duality between $\nabla,\nabla^*$, we have 
\[
g(\nabla^{*}_{X}\theta Y,Z)-g(\theta\nabla_{X}Y,Z)=0
\]
we deduce that \[  \nabla^{*}_{X}\theta Y=\theta\nabla_{X}Y \quad\text{and}\quad g(\theta X,Y)=-g(X,\theta Y)  \]

So $\theta$ is skew-symmetric solution of the gauge equation such that $rank(\theta)=2n.$\\
The sufficient part (2) implies (1):\\   Let $\theta$ be a skew-symmetric solution of the gauge equation. By assumption the rank of $\theta$ is 2n, so 
2-form $p_{\theta}$ has maximal rank, i.e. $p_{\theta}^{n}$  vanishes
nowhere. Associated to $p_{\theta}$ is its 1-dimensional kernel distribution $ker p_{\theta}$. Since $M$ is orientable, by using the Hodge operator $\star$ on $M$, we define a one form $\eta_{\theta}$ such that :
$\eta_{\theta}=^{\star}p_{\theta}^{n}\quad\text{and satisfy naturaly}\quad p_{\theta}^{n}\wedge\eta_{\theta}\ne0.$ The 2-form 
$p_\theta$ defines a line bundle $l_{p_\theta}= \cup_{p \in M} \{p, \ker p_\theta\}$.
Let $\xi_\theta$  be  the unique  section  of $l_{p_{\theta}}$ satisfying $ i_{\xi_\theta}\eta_{\theta}=1.$ The one-form $\eta_{\theta}$ induces an hyperplane distribution by:$H^{ \eta_{\theta}}=ker \eta_{\theta} $
which is everywhere transverse to $l_{p_{\theta}}$. We see that $(p_{\theta},\eta_{\theta})$ determines a splitting \[   TM=( l_{p_{\theta}},\xi_{\theta})\oplus ({H}^{ \eta_{\theta}},\hat{p}_{\theta})           \]

 of the tangent space of M into a framed line bundle and a almost-symplectic hyperplane-bundle $(
 {H}^{ \lambda_{\theta}},\hat{p}_{\theta})$, where $\hat{p}_{\theta}$ is the restriction of $p_{\theta}$ to ${H}^{ \eta_{\theta}}$.\\
 \end{proof}
 
 \begin{cor}
 \label{cor:cosymplectic_selfdual}
 In almost cosymplectic manifold $(M,\omega,\eta)$, with $M$ of dimension $2n+1$, there are always dual connections $(\nabla,\nabla^{*})$ adapted to the distributions $ker\omega$ and $ker\eta$, that is $\nabla\Gamma^{\infty}(ker\omega)\subset \Gamma^{\infty}(ker\omega)\quad \text{and}\quad\nabla^{*}\Gamma^{\infty}(ker\eta)\subset \Gamma^{\infty}(ker\eta).$
 \end{cor}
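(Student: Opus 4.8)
The plan is to reuse the objects built in the proof of Theorem~\ref{thm:cosymplectic_dual}. From the almost cosymplectic structure $(\omega,\eta)$ with Reeb field $\xi$, that proof produces an adapted metric $g$ satisfying $\eta(X)=g(X,\xi)$ and $g(\xi,\xi)=1$, a skew-symmetric $(1,1)$-tensor $\theta$ with $\theta\xi=0$ and $p_\theta=\omega$, and --- using the reduction of the structure group to $\mathrm{Sp}(n,\R)\times 1$ --- an adapted connection $\nabla$ with $\nabla\xi=0$ and $\nabla\omega=0$, whose $g$-dual $\nabla^{*}$ solves the gauge equation. I will show that this very pair $(\nabla,\nabla^{*})$ is the one claimed by the corollary.

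First I would check the inclusion for $\nabla$. Since $\omega^{n}\wedge\eta\neq0$, the form $\omega$ has rank $2n$ at every point of the $(2n+1)$-manifold $M$, so $\ker\omega$ is a line bundle, and $i_{\xi}\omega=0$ identifies it with $\R\xi$. Every section of $\ker\omega$ is of the form $f\xi$ with $f\in C^{\infty}(M)$, hence $\nabla_{X}(f\xi)=(Xf)\xi+f\nabla_{X}\xi=(Xf)\xi$ is again a section of $\ker\omega$. So $\nabla\,\Gamma^{\infty}(\ker\omega)\subset\Gamma^{\infty}(\ker\omega)$; equivalently, a connection annihilating the $2$-form $\omega$ preserves its kernel distribution.

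Next I would establish the stronger statement $\nabla^{*}\eta=0$, from which adaptedness to $\ker\eta$ is immediate. For vector fields $X,Y$, write $\eta(Y)=g(\xi,Y)$ and use the defining identity of the dual connection, $Z(g(A,B))=g(\nabla_{Z}A,B)+g(A,\nabla^{*}_{Z}B)$, with $Z=X$, $A=\xi$, $B=Y$:
\[
X(\eta(Y))=X\!\left(g(\xi,Y)\right)=g(\nabla_{X}\xi,Y)+g(\xi,\nabla^{*}_{X}Y)=g(\xi,\nabla^{*}_{X}Y)=\eta(\nabla^{*}_{X}Y),
\]
the third equality using $\nabla_{X}\xi=0$. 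Therefore $(\nabla^{*}_{X}\eta)(Y)=X(\eta(Y))-\eta(\nabla^{*}_{X}Y)=0$. In particular, if $Y\in\Gamma^{\infty}(\ker\eta)$ then $\eta(Y)$ vanishes identically, whence $\eta(\nabla^{*}_{X}Y)=X(\eta(Y))=0$, i.e. $\nabla^{*}_{X}Y\in\Gamma^{\infty}(\ker\eta)$. This gives $\nabla^{*}\,\Gamma^{\infty}(\ker\eta)\subset\Gamma^{\infty}(\ker\eta)$ and finishes the argument.

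Once Theorem~\ref{thm:cosymplectic_dual} is available the proof is essentially bookkeeping; the only delicate point is to keep track of which member of the dual pair ($\nabla$ or $\nabla^{*}$) occupies which slot in the compatibility identity, and to use that $\eta$ is the $g$-metric dual of $\xi$ --- precisely the two facts that make the cancellation in the displayed line work. I do not anticipate any genuine obstacle beyond this.
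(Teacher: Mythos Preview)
Your proof is correct and follows essentially the same strategy as the paper: both take the adapted connection $\nabla$ with $\nabla\omega=0$ and $\nabla\xi=0$ from Theorem~\ref{thm:cosymplectic_dual}, and then use duality together with $\nabla\xi=0$ to obtain $\nabla^{*}\eta=0$ and hence $\nabla^{*}$-invariance of $\ker\eta$. The only cosmetic difference is in the first inclusion: the paper deduces $\nabla$-invariance of $\ker\omega$ from $\nabla\omega=0$ directly (the kernel of a parallel $2$-form is parallel), whereas you use $\nabla\xi=0$ together with the identification $\ker\omega=\R\xi$.
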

\begin{proof}
Let $(\omega,\eta)$ be an almost cosymplectic structure on $M$. It exists $\nabla$ such that:
\[
\nabla\omega=0,\quad \nabla\xi=0.
\]
(i)$\nabla\omega=0$, let $Y\in \Gamma^{\infty}(ker\omega)$. By using  the identity 
\[
X.\omega(Y,Z)-\omega(\nabla_{X}Y,Z)-\omega(Y,\nabla_{X}Z)=0.
\]
we have 
\[
\nabla\Gamma^{\infty}(ker\omega)\subset \Gamma^{\infty}(ker\omega).
\]
(ii)$\nabla\xi=0$, By duality, we have:  
\[
(\nabla^{*}\eta)(X,Y)=g(\nabla_{X}\xi,Y).
\]
So $\nabla^*\eta=0$. By a simple calculation we have:
\[
\nabla^{*}\Gamma^{\infty}(ker\eta)\subset \Gamma^{\infty}(ker\eta).
\]
\end{proof}

\begin{cor}
Let $M$ be a manifold of dimension $2n+1$, Let put $W=M\times \mathbb{R}$. The following assertions are equivalent:
\begin{enumerate}
    \item The gauge equation of dual connections on $M$ admits a skew-symmetric solution $\theta$ such that $\text{rank }\theta= 2n,$
    \item $M$ admits an almost cosymplectic structure(almost contact structure),
    \item $W$ admits an almost symplectic structure,
    \item  The gauge equation of dual connections on $W$ admits a skew-symmetric solution $\theta$ such that $\text{rank }\theta= 2n+2.$
    
\end{enumerate}

\end{cor}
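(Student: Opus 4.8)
The plan is to prove the four-way equivalence by establishing a cycle of implications, leaning heavily on the two theorems already in hand: Theorem \ref{thm:cosymplectic_dual} (which gives $(1)\Leftrightarrow(2)$ directly, on the manifold $M$), and the theorem of de León--Saralegi quoted in the introduction together with its almost-structure analogue. So the real content is the pair of equivalences $(2)\Leftrightarrow(3)$ and $(1)\Leftrightarrow(4)$, and the scheme $(2)\Rightarrow(3)\Rightarrow(4)\Rightarrow\dots$ suggests itself. First I would record that $(1)\Leftrightarrow(2)$ is exactly Theorem \ref{thm:cosymplectic_dual}, so nothing new is needed there.

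Next, for $(2)\Rightarrow(3)$: given an almost cosymplectic structure $(\omega,\eta)$ on $M$ with $\omega^n\wedge\eta\neq 0$, form on $W=M\times\R$ the $2$-form $\Omega=\mathrm{pr}^\star\omega+\mathrm{pr}^\star\eta\wedge dt$, exactly as in the de León--Saralegi construction. A short computation shows $\Omega^{n+1}=(n+1)\,\mathrm{pr}^\star(\omega^n\wedge\eta)\wedge dt$, which is nowhere zero precisely because $\omega^n\wedge\eta$ is a volume form on $M$; hence $\Omega$ is an almost symplectic form on the $(2n+2)$-manifold $W$. (Note that we do \emph{not} need $d\omega=d\eta=0$ here, since we only claim $\Omega$ is nondegenerate, not closed — that is why the de León--Saralegi statement, which is about the symplectic/cosymplectic case, is only a model for the computation and not literally invoked.) Conversely, for $(3)\Rightarrow(2)$, one would restrict an almost symplectic form on $W=M\times\R$ to a slice $M\times\{t_0\}$ and pair it with the contraction by $\partial_t$: set $\omega=\iota^\star\Omega$ and $\eta=\iota^\star(\iota_{\partial_t}\Omega)$ where $\iota\colon M\times\{t_0\}\hookrightarrow W$; the nondegeneracy of $\Omega$ translates into $\omega^n\wedge\eta\neq0$ by the same algebraic identity read backwards. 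This direction is where I expect the main friction: a generic almost symplectic $\Omega$ on $M\times\R$ need not be adapted to the product structure, so one must argue that the slice-plus-contraction recipe still yields a nondegenerate pair; the cleanest route is the pointwise linear-algebra fact that if a skew form on $V\oplus\R$ is nondegenerate then its restriction to $V$ together with its interior product with the $\R$-direction form an almost cosymplectic pair on $V$, and then apply this fibrewise.

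Finally, $(1)\Leftrightarrow(4)$ is obtained by applying the already-proved equivalence $(1)\Leftrightarrow(2)$ on the manifold $W$ itself: by Theorem \ref{thm:cosymplectic_dual} with $W$ in place of $M$, the gauge equation of dual connections on the $(2n+2)$-manifold... wait — $W$ is even-dimensional, so Theorem \ref{thm:cosymplectic_dual} as stated (odd dimension $2n+1$) does not apply verbatim. Instead $(3)\Leftrightarrow(4)$ should be handled by the even-dimensional analogue of the gauge-equation correspondence: an almost symplectic structure on a $2m$-manifold is equivalent to a skew-symmetric, nowhere-degenerate (rank $2m$) solution of the gauge equation of dual connections, which is precisely the content of Proposition \ref{prop:theta_parallel} and the sufficiency argument inside the proof of Theorem \ref{thm:cosymplectic_dual} with the line bundle $l_{p_\theta}$ absent. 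Concretely: from an almost symplectic $\Omega$ on $W$, build an adapted metric $g$ and compatible almost complex $J$, set $\theta=J$, and check $\nabla^*_X\theta Y=\theta\nabla_X Y$ for the $\mathrm{Sp}$-adapted connection exactly as in the even-dimensional step of the theorem's proof; conversely a skew solution $\theta$ of rank $2n+2$ gives the nondegenerate $2$-form $p_\theta$ on $W$ by Proposition \ref{prop:theta_parallel}. Chaining $(1)\Leftrightarrow(2)\Rightarrow(3)\Leftrightarrow(4)$ and $(3)\Rightarrow(2)$ closes the loop, completing the proof.
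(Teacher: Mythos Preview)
Your proposal is correct and structurally parallel to the paper's: both invoke Theorem \ref{thm:cosymplectic_dual} for $(1)\Leftrightarrow(2)$, both establish $(3)\Leftrightarrow(4)$ via an almost-symplectic connection on $W$ and the associated $2$-form $p_\theta$, and both handle $(3)\Rightarrow(2)$ by pulling $\Omega$ back to a slice and contracting with $\partial_t$.

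The one genuine difference is in $(2)\Rightarrow(3)$. You go directly: set $\Omega=\mathrm{pr}^\star\omega+\mathrm{pr}^\star\eta\wedge dt$ and compute $\Omega^{n+1}=(n+1)\,\mathrm{pr}^\star(\omega^n\wedge\eta)\wedge dt$. The paper instead detours through an associated almost contact metric structure on $M$ (via \cite{blair2010riemannian}), then the Sasaki almost complex structure $J$ on $M\times\R$, and finally Ehresmann's equivalence of almost complex with almost symplectic structures. Your route is shorter and self-contained; the paper's produces an explicit $J$ along the way but at the price of three external citations. Your caution about $(3)\Rightarrow(2)$ is also well placed: the paper asserts that $\Omega=p^\star\omega+p^\star\eta\wedge ds$ holds \emph{globally} on $W$, which is not true for a generic almost symplectic $\Omega$; the identity only holds pointwise along the chosen slice, and your fibrewise linear-algebra argument (nondegeneracy of a skew form on $V\oplus\R$ forces $\omega^n\wedge\eta\neq0$ on $V$) is the cleaner way to make that step rigorous.
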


\begin{proof}
(1)$\iff$(2) is exactly the assertion of the previous theorem. \\
Let us proves that (2)$\iff$(3):\\
The necessary part “$(2)\Longrightarrow (3)$”:Starting from a almost cosymplectic structure $(\omega,\eta)$, from \cite{blair2010riemannian} there exist an almost contact metric  $(\theta,\xi,\eta,g)$ on M associated to the almost cosymplectic structure , from \cite{sasaki1961differentiable}
we known that  $W=M\times\mathbb{R}$ admits a almost complex structure J defined by:\[J(X,f\frac{\partial}{\partial s})=(\theta X-f\xi,\eta(X)\frac{\partial}{\partial s}).\] we know that from \cite{ehresmann1950varietes} the existence on a manifold
of almost complex structures is equivalent to  almost symplectic structures.\\
The sufficient part “(2)$\Longleftarrow$(3) ”Let  denote by $p:W=\mathbb{R}\times M\rightarrow M$ the
canonical projection and by $l(a) = (0, a):M\rightarrow W=\mathbb{R}\times M$
 a fixed section. Let $\Omega$ is  almost symplectic 2-form on $W$ ie ($\Omega^{n+1}\ne 0)$, let $s$ the coordinate in $\mathbb{R}$ and $\frac{\partial}{\partial s}$ the corresponding coordinate vector field on $\mathbb{R}$, we define $(\eta,\omega)$ by $$ \omega=l^{*}\Omega\quad, \eta=l^{*}i_{\frac{\partial}{\partial s}}\Omega.$$ We claim that on $W=M\times \mathbb{R}$ we have \[ \Omega=p^{*}\omega+p^{*}\eta\wedge ds.     \]
 Then from \cite{de1993cosymplectic}, we known that $\Omega^{n+1}=(n+1)p^{*}(\eta\wedge\omega^{n})\wedge ds.$ The 2-form $\Omega$ satisfies $\Omega^{n+1}\ne0$ thus $\eta\wedge\omega^{n}$ is volume form on $M$, and consequently the pair $(\eta,\omega)$ is almost cosymplectic structure on M.
 
 Let us proves that $(3)\iff(4):$
 The necessary part “$(3)\Longrightarrow (4)$”. Let $\Omega$ an almost symplectic on $W$, from \cite{vaisman1985symplectic},\cite{bourgeois1999variational}, there exist almost-symplectic connections $\nabla$ defined by \[
\nabla_{X}Y=\nabla^{0}_{X}Y+A(X,Y)
\]
Where $\nabla^{0}$ is the linear connection on $W$, defined by: \[\nabla^{0}_{X}\Omega(Y,Z)=\Omega(A(X,Y),Z). \] The almost-symplectic connections satisfy: \[ \nabla\Omega=0       \]
There exist a skew symmetric $\theta\in \Gamma(TW^{\star}\otimes TW)$  and Riemannian on $W$ such that the identity:
\[ \Omega(X,Y)=g(\theta X,Y) ,\quad \theta^{2}=-Id_{TW}  
\].

The identity $$ \nabla\Omega=0  $$ implies that :
\[X.\Omega(Y,Z)-\Omega(\nabla_{X}Y,Z)-\Omega(Y,\nabla_{X}Z)=0
\]
\[
X.g(\theta Y,Z)-g(\theta\nabla_{X}Y,Z)-g(\theta Y,\nabla_{X}Z)=0
\]
\[
g(\nabla^{*}_{X}\theta Y-\theta\nabla_{X}Y,Z)=0
\]
so we have \[\nabla^{*}_{X}\theta Y=\theta\nabla_{X}Y\quad\text{and}\quad rank(\theta)=rank(\Omega)=2n+2.
\] 
The sufficient part “(3)$\Longleftarrow$(4)\\
Let $\theta$ be a skew-symmetric solution of the gauge equation  of  dual connections $({\nabla},{\nabla}^{*})$ on $W=M\times \mathbb{R}$ of  $rank(\theta)=2n+2.$ The 2-form $p_{\theta}$ is non-degenerate on $W$, then $p_{\theta}$ is almost symplectic structure on $W$.
\end{proof}
Proceeding the same way, we have the following corollary:
\begin{cor}
Let $M$ be an even dimensional manifold of dimension $2n$, Let put $W=M\times \mathbb{R}$. The following assertions are equivalent:
\begin{enumerate}
    \item The gauge equation of dual connections on $M$ admits a skew-symmetric solution $\theta$ such that $\text{rank }\theta= 2n,$
    \item $M$ admits an almost symplectic structure(almost contact structure),
    \item $W$ admits an almost cosymplectic structure(almost contact structure),
    \item  The gauge equation of dual connections on $W$ admits a skew-symmetric solution $\theta$ such that $\text{rank }\theta= 2n.$
    
\end{enumerate}
\end{cor}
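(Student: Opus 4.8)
The plan is to mirror the proof of the preceding corollary, the only difference being that now it is $W=M\times\mathbb{R}$, not $M$, that has odd dimension $2n+1$, so Theorem~\ref{thm:cosymplectic_dual} applies directly to $W$. Indeed the equivalence $(3)\Longleftrightarrow(4)$ is precisely that theorem read with $W$ in place of $M$ (and the same integer $n$), so nothing has to be done there. Likewise $(1)\Longleftrightarrow(2)$ is the even-dimensional analogue of Theorem~\ref{thm:cosymplectic_dual}: for $(2)\Rightarrow(1)$ one starts from an almost symplectic form $\omega$ on $M$, reduces the structure group of $TM$ to $\text{Sp}(n,\mathbb{R})$, picks an adapted linear connection $\nabla$ with $\nabla\omega=0$ together with a compatible Riemannian metric $g$ and a $(1,1)$-tensor $\theta$ satisfying $\theta^{2}=-\text{Id}$ and $\omega(X,Y)=g(\theta X,Y)$; expanding $\nabla\omega=0$ and using the duality between $\nabla$ and $\nabla^{*}$ (exactly as in the proof of Theorem~\ref{thm:cosymplectic_dual}) yields $\nabla^{*}_{X}\theta Y=\theta\nabla_{X}Y$, so $\theta$ is a skew-symmetric solution of the gauge equation with $\text{rank}\,\theta=\text{rank}\,\omega=2n$. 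For $(1)\Rightarrow(2)$, if $\theta$ is such a solution then by Proposition~\ref{prop:theta_parallel} the $2$-form $p_{\theta}(X,Y)=g(\theta X,Y)$ is $\nabla$-parallel; since $\text{rank}\,\theta=2n=\dim M$ it is nondegenerate, hence an almost symplectic structure on $M$ (and, being closed, in fact a symplectic form).

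It remains to establish $(2)\Longleftrightarrow(3)$, which is where the content lies. The direction $(2)\Rightarrow(3)$ is explicit: given an almost symplectic $\omega$ on $M$, put $\widetilde\omega=p^{*}\omega$ and $\widetilde\eta=ds$ on $W=M\times\mathbb{R}$, where $p\colon W\to M$ is the projection and $s$ the coordinate on the $\mathbb{R}$ factor; then $\widetilde\omega^{\,n}\wedge\widetilde\eta=p^{*}(\omega^{n})\wedge ds$ is nowhere zero because $\omega^{n}$ is a volume form on $M$, so $(\widetilde\omega,\widetilde\eta)$ is an almost cosymplectic structure on $W$. For $(3)\Rightarrow(2)$ I would take an almost cosymplectic structure $(\widetilde\omega,\widetilde\eta)$ on $W$ with Reeb vector field $\widetilde\xi$, so that $H=\ker\widetilde\eta$ is a corank-one distribution on which $\widetilde\omega$ restricts nondegenerately, making $H$ a symplectic vector bundle over $W$; restricting this datum to the slice $M\times\{0\}$ and using that $TW$ splits as $p^{*}TM$ plus a trivial line bundle, one obtains an isomorphism of $TM\oplus(\text{trivial line})$ with $H|_{M}\oplus(\text{trivial line})$ in which the first summand on the right is complex. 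Because $W=M\times\mathbb{R}$ deformation retracts onto $M$, the reduction of the structure group of $TW$ to $\text{Sp}(n,\mathbb{R})\times 1$ then descends, up to homotopy, to a reduction of the structure group of $TM$ to $\text{Sp}(n,\mathbb{R})$, that is, to an almost symplectic structure on $M$; once this is known, $(2)\Rightarrow(1)$ closes the loop of equivalences.

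I expect the step $(3)\Rightarrow(2)$ to be the main obstacle. The naive guess, namely pulling $\widetilde\omega$ back along the section $a\mapsto(a,0)$, does not work in general, since that pullback degenerates exactly where the kernel line of $\widetilde\omega$ is tangent to the slice $M\times\{0\}$; so one cannot avoid either the homotopy-theoretic descent along $W\simeq M$ sketched above, or an explicit isotopy of $\widetilde\eta$ (equivalently, of the Reeb line field) into a position transverse to every slice, after which $l^{*}\widetilde\omega$ is nondegenerate. Everything else is a routine transcription of the arguments already used for Theorem~\ref{thm:cosymplectic_dual} and the preceding corollary.
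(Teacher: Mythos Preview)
Your instinct that $(3)\Rightarrow(2)$ is the crux is correct, and so is your observation that the naive pullback $l^{*}\widetilde\omega$ can degenerate where the Reeb line is tangent to the slice. But your homotopy-descent repair does not go through: a reduction of $TW$ to $\text{Sp}(n,\mathbb{R})\times 1$ restricts over $M\times\{0\}$ only to a splitting $TM\oplus\epsilon^{1}\cong E\oplus\epsilon^{1}$ with $E$ symplectic, i.e.\ it shows merely that $TM$ is \emph{stably} isomorphic to a symplectic bundle. Since the stabilization map $[M,BO(2n)]\to[M,BO(2n+1)]$ is not injective when $\dim M=2n$, one cannot conclude that $TM$ itself admits an almost symplectic (equivalently almost complex) structure.

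In fact $(3)\Rightarrow(2)$ is false, so neither your argument nor the paper's ``proceeding the same way'' can be completed. Take $M=S^{4}$: from $S^{4}\subset\mathbb{R}^{5}$ one has $TS^{4}\oplus\epsilon^{1}\cong\epsilon^{5}$, so $T(S^{4}\times\mathbb{R})$ is trivial and carries an obvious almost cosymplectic structure (in a global frame put $\omega=e^{1}\wedge e^{2}+e^{3}\wedge e^{4}$, $\eta=e^{5}$); thus $(3)$ holds. But $S^{4}$ has no almost complex structure: $H^{2}(S^{4})=0$ forces $c_{1}=0$, whence $p_{1}=c_{1}^{2}-2c_{2}=-2\chi(S^{4})=-4$, contradicting $p_{1}(TS^{4})=0$. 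So $(2)$ fails. Your treatment of the remaining implications---$(3)\Leftrightarrow(4)$ via Theorem~\ref{thm:cosymplectic_dual} on $W$, $(2)\Rightarrow(3)$ via $(p^{*}\omega,ds)$, and $(1)\Leftrightarrow(2)$ as the even-dimensional transcription---is sound (the parenthetical ``closed'' in your $(1)\Rightarrow(2)$ is unwarranted without torsionlessness, but irrelevant to the almost-symplectic conclusion); it is the corollary itself that is incorrect.
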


\begin{prop}
 Let $(\theta,\eta,\xi)$ be an almost contact manifold. The following assertions are equivalent:
\begin{enumerate}
    \item $\nabla\theta=0,\quad \nabla\xi=0,$
    \item $\nabla\theta=0,\quad \nabla\eta=0.$
\end{enumerate}
\end{prop}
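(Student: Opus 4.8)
The plan is to derive the equivalence purely formally, from the two structural identities of the almost contact structure, $\theta^{2}=-\text{Id}+\eta\otimes\xi$ (equation \ref{subeq:contact_def_prop}) and $\eta(\xi)=1$ (equation \ref{subeq:eta_xi}); in particular no metric, gauge-equation, or torsion hypothesis on $\nabla$ is needed, and the argument works for an arbitrary affine connection. The first step is the remark that a $\nabla$-parallel $(1,1)$-tensor has $\nabla$-parallel square: regarding $\theta$ as a bundle endomorphism, one has
\[
\bigl(\nabla_{Z}\theta^{2}\bigr)(X)=\bigl(\nabla_{Z}\theta\bigr)(\theta X)+\theta\bigl((\nabla_{Z}\theta)(X)\bigr),
\]
so $\nabla\theta=0$ forces $\nabla(\theta^{2})=0$. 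Since $\nabla(\text{Id})=0$ for every connection, differentiating \ref{subeq:contact_def_prop} together with the Leibniz rule $\bigl(\nabla_{Z}(\eta\otimes\xi)\bigr)(X)=(\nabla_{Z}\eta)(X)\,\xi+\eta(X)\,\nabla_{Z}\xi$ gives, for all vector fields $X,Z$,
\[
(\nabla_{Z}\eta)(X)\,\xi+\eta(X)\,\nabla_{Z}\xi=0. \qquad (\ast)
\]

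The second step is to read off from $(\ast)$ the shape of $\nabla\xi$ and $\nabla\eta$ under the standing hypothesis $\nabla\theta=0$. Putting $X=\xi$ in $(\ast)$ and using $\eta(\xi)=1$ gives $\nabla_{Z}\xi=-(\nabla_{Z}\eta)(\xi)\,\xi$; on the other hand, differentiating the constant function $\eta(\xi)=1$ yields $(\nabla_{Z}\eta)(\xi)=-\eta(\nabla_{Z}\xi)$. Setting $\lambda(Z):=\eta(\nabla_{Z}\xi)$, these two equalities combine to $\nabla_{Z}\xi=\lambda(Z)\,\xi$. Substituting this back into $(\ast)$ for arbitrary $X$ and using that $\xi$ is nowhere vanishing (again because $\eta(\xi)=1$) gives $\nabla_{Z}\eta=-\lambda(Z)\,\eta$. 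Hence, whenever $\nabla\theta=0$, one has $\nabla\xi=\lambda\otimes\xi$ and $\nabla\eta=-\lambda\otimes\eta$ for one and the same $1$-form $\lambda$.

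With this in hand the equivalence is immediate. If (1) holds, then $\lambda(Z)=\eta(\nabla_{Z}\xi)=\eta(0)=0$ for every $Z$, so $\nabla\eta=-\lambda\otimes\eta=0$, giving (2); conversely, if (2) holds, then $-\lambda(Z)\,\eta=\nabla_{Z}\eta=0$ for every $Z$, and since $\eta$ is nowhere zero this forces $\lambda\equiv0$, whence $\nabla\xi=\lambda\otimes\xi=0$, giving (1). The only point that requires genuine care is the bookkeeping in establishing $(\ast)$: one must apply $\nabla$, as a derivation on the tensor algebra, correctly to the composition $\theta\circ\theta$ and to the rank-one tensor $\eta\otimes\xi$, and keep careful track of the contractions. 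Everything past that is elementary fibrewise linear algebra, in the same spirit as the proof of Proposition \ref{prop:contact_basic_prop}.
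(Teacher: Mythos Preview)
Your proof is correct and follows essentially the same route as the paper: both differentiate the structural identity $\theta^{2}=-\text{Id}+\eta\otimes\xi$ to obtain
\[
(\nabla_{Z}\theta)(\theta X)+\theta\bigl((\nabla_{Z}\theta)X\bigr)=(\nabla_{Z}\eta)(X)\,\xi+\eta(X)\,\nabla_{Z}\xi,
\]
which under $\nabla\theta=0$ reduces to your relation $(\ast)$. The paper then simply says ``we deduce the equivalence'' without further comment; you fill in the details carefully, even going a step further by extracting the $1$-form $\lambda$ with $\nabla_{Z}\xi=\lambda(Z)\xi$ and $\nabla_{Z}\eta=-\lambda(Z)\eta$. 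That intermediate step is correct and informative but not strictly needed: from $(\ast)$ one can argue more directly that $\nabla\xi=0$ forces $(\nabla_{Z}\eta)(X)\,\xi=0$ hence $\nabla\eta=0$ (since $\xi\neq0$), and conversely $\nabla\eta=0$ forces $\eta(X)\,\nabla_{Z}\xi=0$, whence $\nabla\xi=0$ by taking $X=\xi$.
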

\begin{proof}
Let $(\theta,\eta,\xi)$ an almost contact structure ie :
\[  \theta\circ\theta+I=\eta\otimes\xi,\quad \lambda(\xi)=1               \]
By a simple calculations, we have
$$(\nabla_{X}\theta)(\theta Y)+\theta((\nabla_{X}\theta)Y) =\eta(Y)(\nabla_{X}\xi)+((\nabla_{X}\eta)Y)\xi.                     $$ we deduce the equivalence.
\end{proof}
\begin{prop}
Let $(\omega,\eta)$ be an almost cosymplectic manifold with associated
almost contact metric structure $(\theta,\eta,\xi,g)$. If $\nabla\omega=0$, then the next assertions are equivalent:
\begin{enumerate}
    \item $\nabla\theta=0,$
    \item g is $\nabla$-paralell ie $(\nabla g=0)$,
    \item $\left( \nabla_X\xi \right)^\flat   = \nabla_X\eta$\quad\text{or}\quad $\nabla_{X}\xi=(\nabla_{X}\eta)^{\sharp}.$
\end{enumerate}
\end{prop}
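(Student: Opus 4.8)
The plan is to translate all three conditions into statements about the difference tensor $D=\nabla^{*}-\nabla$ and the totally symmetric tensor $T=\nabla g$ of Proposition~\ref{prop:t_tensor}, using the relation $\omega(X,Y)=\met{\theta X}{Y}$ that pins $\theta$ to the fundamental $2$-form. First I would expand the hypothesis: for all vector fields $X,Y,Z$,
\[
0=(\nabla_{Z}\omega)(X,Y)=Z\bigl(\met{\theta X}{Y}\bigr)-\met{\theta\nabla_{Z}X}{Y}-\met{\theta X}{\nabla_{Z}Y}=(\nabla_{Z}g)(\theta X,Y)+\met{(\nabla_{Z}\theta)X}{Y},
\]
the last equality being just the definition of $\nabla g$. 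Since $(\nabla_{Z}g)(\theta X,Y)=T(\theta X,Y,Z)=\met{D(Z,\theta X)}{Y}$ by Proposition~\ref{prop:t_tensor}, this re-proves that $\theta$ solves the gauge equation ($(\nabla_{Z}\theta)X=-D(Z,\theta X)$) and, crucially, yields the master identity $\met{(\nabla_{Z}\theta)X}{Y}=-(\nabla_{Z}g)(\theta X,Y)$.

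For (1)$\Leftrightarrow$(2) I would invoke the master identity together with the corollary of Proposition~\ref{prop:theta_parallel} characterising $\nabla\theta=0$ by metricity of $\nabla$: if $\nabla g=0$ the right-hand side vanishes, so $\nabla\theta=0$; conversely $\nabla\theta=0$ forces $(\nabla_{Z}g)$ to vanish on $\operatorname{Im}\theta\times TM$, and because $\operatorname{rank}\theta=2n$ and $\theta\xi=0$ we have $\operatorname{Im}\theta=\ker\eta$, so — $\nabla g$ being symmetric — only the Reeb--Reeb component $(\nabla_{Z}g)(\xi,\xi)$ is not yet controlled. Differentiating $\theta\xi=0$ and using the gauge equation shows $\nabla_{Z}\xi\in\ker\theta=\R\xi$, and $g(\xi,\xi)=\eta(\xi)=1$ (equation \ref{eq:eta_xi_relationship}) gives $(\nabla_{Z}g)(\xi,\xi)=-2g(\nabla_{Z}\xi,\xi)$, which is killed by the normalisation $\nabla\xi=0$ carried over from the construction in Theorem~\ref{thm:cosymplectic_dual}; hence $\nabla g=0$.

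For (2)$\Leftrightarrow$(3), the first remark is that $\eta=\xi^{\flat}$ by equation \ref{eq:eta_xi_relationship}, so on $1$-forms $(\nabla_{X}\eta)^{\sharp}=\nabla^{*}_{X}\xi$; thus (3) is exactly $\nabla_{X}\xi=\nabla^{*}_{X}\xi$, i.e. $D(X,\xi)=0$ for all $X$. If $\nabla g=0$ then $T=0$ by Proposition~\ref{prop:t_tensor}, so $D=0$, whence $D(\cdot,\xi)=0$ and also $\nabla=\nabla^{*}=\lc$, which gives (3). For the converse, assuming $D(\cdot,\xi)=0$, total symmetry of $T$ gives $(\nabla_{Z}g)(X,\xi)=\met{D(Z,X)}{\xi}=\met{D(Z,\xi)}{X}=0$, so $\nabla g$ vanishes on $TM\times\R\xi$; I would then differentiate the compatibility relation $\theta^{2}=-\mathrm{Id}+\eta\otimes\xi$ by $\nabla$ (as in the preceding proposition) and feed in the gauge equation to push the vanishing of $D$ from the $\xi$-direction onto all of $TM=\ker\eta\oplus\R\xi$, after which the master identity of the first step returns (1) and (2).

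The step I expect to be the real obstacle is this last one. The master identity and the corollary of Proposition~\ref{prop:theta_parallel} give clean control of $\nabla g$, equivalently of $D$, only on $\operatorname{Im}\theta=\ker\eta$, whereas condition (3) directly controls only the complementary Reeb line $\R\xi$; bridging the two — i.e. showing that annihilating $D$ on the Reeb direction, together with the gauge equation and the structural identities $\theta\xi=0$ and $\theta^{2}=-\mathrm{Id}+\eta\otimes\xi$, forces $D\equiv0$ — is where the full almost-contact-metric compatibility and the normalisation $\nabla\xi=0$ from Theorem~\ref{thm:cosymplectic_dual} both have to be used at once. Everything else reduces to routine manipulations with $D$, $T=\nabla g$ and the identity $\met{(\nabla_{Z}\theta)X}{Y}=-(\nabla_{Z}g)(\theta X,Y)$.
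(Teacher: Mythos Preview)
Your approach is essentially the paper's: both rest on expanding $(\nabla_{Z}\omega)(X,Y)=\met{(\nabla_{Z}\theta)X}{Y}+(\nabla_{Z}g)(\theta X,Y)$ (your ``master identity'') together with the observation that condition (3) is exactly $\nabla_{X}\xi=\nabla^{*}_{X}\xi$ via the duality formula $(\nabla^{*}_{X}\xi)^{\flat}=\nabla_{X}\xi^{\flat}=\nabla_{X}\eta$. The only organisational difference is that the paper proves (1)$\Leftrightarrow$(3) directly rather than routing through (2)$\Leftrightarrow$(3); since (1)$\Leftrightarrow$(2) is already in hand this is cosmetic. For (1)$\Rightarrow$(2) the paper writes: the gauge equation plus $\nabla\theta=0$ give $\nabla^{*}_{X}\theta Y=\nabla_{X}\theta Y$, ``we deduce that $\nabla=\nabla^{*}$, then $\nabla g=0$''. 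For (3)$\Rightarrow$(1) it writes: $(\nabla_{X}\xi)^{\flat}=\nabla_{X}\eta=(\nabla^{*}_{X}\xi)^{\flat}$, ``so $\nabla=\nabla^{*}$''.

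You have correctly located the one substantive issue: in each direction the paper infers $\nabla=\nabla^{*}$ from agreement on a proper subbundle (on $\operatorname{Im}\theta=\ker\eta$ in the first case, on $\mathbb{R}\xi$ in the second), and does not justify the passage to all of $TM$. The paper simply asserts it. Your strategy of importing $\nabla\xi=0$ from the adapted connection of Theorem~\ref{thm:cosymplectic_dual} to kill the residual Reeb--Reeb component, and of differentiating $\theta^{2}=-\mathrm{Id}+\eta\otimes\xi$ for the converse, goes beyond what the paper supplies; be aware, though, that $\nabla\xi=0$ is not among the stated hypotheses of the proposition, so if you use it you should flag it as an additional standing assumption inherited from the surrounding context. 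In short: same proof, and you are more scrupulous than the paper about the step it glosses over.
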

\begin{proof}
Let proves that (1)$\iff$(2):\\
Let us proves that (2)$\Longrightarrow$(1):For any $X,Y,Z$, it comes:
  \[
    \begin{split}
    & \nabla_Z(\omega)(X,Y) =Z(g(\theta X,Y))-g(\theta\nabla_{Z}X,Y)-g(\theta X,\nabla_{Z}Y)\\
    & =g(\nabla_{Z}\theta X,Y)+g(\theta X,\nabla_{Z}Y)-g(\theta\nabla_{Z}X,Y)-g(\theta X,\nabla_{Z}Y)\\
    & =g(\nabla_{Z}\theta X,Y)-g(\theta\nabla_{Z}X,Y)\\
     & =g((\nabla_{Z}\theta)X,Y)\\
\end{split}
\] So we deduce the necessary part.\\ Let proves the sufficient part (1)$\Longrightarrow$(2)
Recall that from Proposition 3.4, $\nabla\omega=0$ is equivalent to $ \nabla^{*}_{X}\theta Y=\theta\nabla_{X}Y$, (1) implies $\nabla_{X}\theta Y=\nabla^{*}_{X}\theta Y$, we deduce that $ \nabla=\nabla^{*} $, then $\nabla g=0.$ This proves the sufficient part.\\
Let proves that (1)$\iff$(3):\\ Let proves the sufficient part (1)$\Longrightarrow$(3): Assume that $\nabla\theta=0$, then $\nabla=\nabla^{*}$, by using the formula $\left( \nabla_Y^*X \right)^\flat  = \nabla_YX^{\flat}$(resp,$\left( \nabla_X \omega \right)^\sharp = \nabla_X^* \omega^\sharp$), we deduce that $\left( \nabla_X\xi \right)^\flat   = \nabla_X\eta$(resp, $\nabla_{X}\xi=(\nabla_{X}\eta)^{\sharp}$).\\ Let proves the necessary part (3)$\Longrightarrow$(1):By simple observations  $\left( \nabla_X\xi \right)^\flat = \nabla_X\eta=\left( \nabla^{*}_X\xi \right)^\flat$, so $\nabla=\nabla^{*}$, then (1) is demonstrated.
\end{proof}

\subsection{\textbf{Gauge equation of selfdual connections}}
When $\nabla=\nabla^{*}$, the gauge equation is equivalent to \[(\nabla_X\theta)Y=0 \quad \forall X,Y\in \mathcal{X}(M).\]

\begin{thm}
The following assertions are equivalent:
\begin{enumerate}
    \item $M$ admits an almost contact metric structure,
    \item It exists a metric on $M$ such that the gauge equation of self dual connections with respect to it admits a skew-symmetric solution $\theta$ such that $\text{rank }\theta= 2n.$
\end{enumerate}
\end{thm}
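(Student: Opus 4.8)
The statement to prove is the equivalence between $M$ admitting an almost contact metric structure and the existence of a metric on $M$ for which the self-dual gauge equation $(\nabla_X\theta)Y = 0$ admits a skew-symmetric solution $\theta$ of rank $2n$. The strategy is to specialize the argument of Theorem \ref{thm:cosymplectic_dual} to the self-dual case, where the gauge equation collapses to $\nabla\theta = 0$ and $\nabla$ is forced to be the Levi-Civita connection of the chosen metric (by the remark following the corollary to Proposition \ref{prop:theta_parallel}, since self-duality means $\nabla = \nabla^*$ is metric).

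\textbf{Direction (1) $\Rightarrow$ (2).} Starting from an almost contact metric structure $(\phi,\xi,\eta,g)$ on $M$, I would take $g$ itself as the metric witnessing (2). Define $\theta = \phi$; by the skew-symmetry \eqref{eq:skew_symetry} of $\phi$ with respect to the adapted metric, $\theta$ is skew-symmetric, and by \eqref{subeq:contact_rank} its rank is $2n$. The remaining task is to produce a self-dual connection $\nabla$ (necessarily $\nabla = \lc$) with $\lc\theta = 0$ — but this is generally \emph{false} for an arbitrary almost contact metric structure, so the honest move is to invoke a deformation: one knows (cf. the cosymplectic/coKähler literature, e.g. \cite{blair2010riemannian}) that one may instead choose a \emph{different} adapted metric, or rather reconstruct $\theta$ and $g$ together. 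The cleanest route mirrors the proof of Theorem \ref{thm:cosymplectic_dual}: build, pointwise over an adapted frame $(X_0=\xi, X_1,\dots,X_n,\hat X_1,\dots,\hat X_n)$, the splitting $TM = \langle\xi\rangle \oplus H$, put the standard complex structure $J$ on $H$, set $\theta X = JX$ on $H$ and $\theta\xi = 0$, and take $g$ to be the metric making this frame orthonormal. Then there exists a metric connection $\nabla$ for $g$ with $\nabla\theta = 0$ precisely when the holonomy obstruction of \cite{ATKINS2011310} vanishes — and for a rank-$2n$ skew tensor of this form on an orientable odd-dimensional manifold that obstruction is exactly the condition for $(\phi,\xi,\eta)$ to be reducible to $U(n)\times 1$, which is the defining property of an almost contact structure. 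Taking $\nabla = \lc$ of $g$ gives $\nabla = \nabla^*$ and $\nabla\theta = 0$, which by the remark in Subsection 3.2 is the self-dual gauge equation.

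\textbf{Direction (2) $\Rightarrow$ (1).} Given such a metric $g$ and skew-symmetric $\theta$ of rank $2n$ with $\nabla = \lc$ and $\lc\theta = 0$, I would run the sufficiency argument of Theorem \ref{thm:cosymplectic_dual} verbatim: $p_\theta(X,Y) = g(\theta X, Y)$ is a $\lc$-parallel $2$-form (Proposition \ref{prop:theta_parallel}), hence closed, of maximal rank, so $p_\theta^n$ vanishes nowhere; its one-dimensional kernel distribution $\ker p_\theta$ is spanned by a vector field, and since $M$ is orientable, applying $\star$ to $p_\theta^n$ gives a one-form $\eta_\theta$ with $p_\theta^n \wedge \eta_\theta \neq 0$, normalized so that $\eta_\theta(\xi_\theta) = 1$ for $\xi_\theta$ spanning $\ker p_\theta$. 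One then sets $\phi$ to be $(-p_\theta)^\sharp$ rescaled on $H = \ker\eta_\theta$ and zero on $\xi_\theta$; because $\theta$ is $\lc$-parallel and $g$ is fixed, one can normalize within the parallel family so that $\phi^2 = -\mathrm{Id} + \eta_\theta\otimes\xi_\theta$, and the metric relation \eqref{eq:adapted_metric} holds by construction since $g(\phi X,\phi Y)$ and $g(X,Y)-\eta_\theta(X)\eta_\theta(Y)$ agree on the orthogonal splitting. This yields an almost contact metric structure with adapted metric $g$.

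\textbf{Main obstacle.} The genuinely delicate point is the $(1)\Rightarrow(2)$ direction: passing from an \emph{arbitrary} almost contact metric structure to one whose fundamental data is parallel under the Levi-Civita connection of \emph{some} adapted metric. The topological reduction to $U(n)\times 1$ gives a $G$-structure, but a torsion-free compatible connection need not exist on the nose — the clean statement is that the \emph{first-order} obstruction (the intrinsic torsion of the $U(n)\times 1$-structure) can always be absorbed by a suitable choice of metric and $\theta$ within the homotopy class, and this is exactly what \cite{ATKINS2011310} provides the local existence criterion for. I would make this precise by choosing the adapted frame bundle as in Theorem \ref{thm:cosymplectic_dual}, invoking \cite{ATKINS2011310} for the existence of the metric connection annihilating $\theta$, and noting that such a connection, being metric and (one arranges) torsion-free, equals $\lc$ and hence is self-dual; the skew-symmetry and rank conditions on $\theta$ are then immediate. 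Everything else is bookkeeping already carried out in the preceding theorem and propositions.
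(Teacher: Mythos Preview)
Your proof has a genuine gap stemming from a misidentification of what ``self-dual'' means here. You assert that a self-dual connection is ``necessarily $\nabla = \lc$,'' invoking the remark after the corollary to Proposition~\ref{prop:theta_parallel}. But that remark applies only to \emph{torsionless} self-dual connections; the hierarchy in the paper is: dual connections (Section~3.1), self-dual connections (Section~3.2, this theorem), torsionless dual connections (Section~3.3), and torsionless self-dual connections, i.e.\ Levi-Civita (Section~3.4). In the present theorem ``self-dual'' means only $\nabla = \nabla^*$, equivalently $\nabla g = 0$, and such metric connections may carry torsion.

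This misreading wrecks your $(1)\Rightarrow(2)$ argument. You correctly observe that $\lc\theta = 0$ is generally false for an arbitrary almost contact metric structure, but your proposed workaround --- absorbing intrinsic torsion of the $U(n)\times 1$-structure to make some adapted metric have $\lc\theta = 0$ --- amounts to claiming every almost contact metric manifold is, up to deformation of the metric, coK\"ahler. That is false; the obstruction is not merely first-order. The paper's route avoids this entirely: it cites Sasaki \cite{sasaki1961differentiable} and Motomiya \cite{motomiya1968study} to produce, for any almost contact metric structure $(\theta,\xi,\eta,g)$, a linear connection $\nabla$ (with torsion, in general) satisfying $\nabla g = 0$ and $\nabla\theta = 0$. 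Since $\nabla g = 0$ makes $\nabla$ self-dual, $\theta$ solves the self-dual gauge equation with the given $g$, and no deformation or holonomy-obstruction analysis is needed.

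For $(2)\Rightarrow(1)$ your construction is more elaborate than necessary and again leans on $\nabla = \lc$ (e.g.\ claiming $p_\theta$ is closed via $\lc$-parallelism). The paper simply notes that a rank-$2n$ skew $\theta$ gives, via Theorem~\ref{thm:cosymplectic_dual}, an almost cosymplectic structure, and then invokes \cite{blair2010riemannian} to upgrade to an almost contact metric structure. Only the rank hypothesis matters; closedness of $p_\theta$ is irrelevant here.
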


\begin{proof}
This is essentially a corollary of theorem \ref{thm:cosymplectic_dual}.
Let proves that (1) implies (2).\\ Let $(\theta,\xi,\eta,g)$-structure(almost contact metric structure) on M, from \cite{sasaki1961differentiable}[Theorem 11],\cite{motomiya1968study}[Theorem 2], there exist an linear connection such that:\[\nabla\xi=0\quad,\nabla\theta=0,\quad \nabla\eta=0,\quad \nabla g=0 . \]
We deduce that $\theta$ is skew-symmetric solution of the selfdual connection $\nabla$ and the rank($\theta$)=2n.\\
Let proves that (2) implies (1).\\ Let $\theta$ be a skew-symmetric solution of the gauge equation  of  selfdual connections $\nabla$ such that $\text{rank }\theta= 2n.$
From \ref{thm:cosymplectic_dual}, M admits an almost cosymplectic structure. From \cite{blair2010riemannian}, there exists an almost contact metric structure $(\theta,\xi,\eta, g)$ on M.

\end{proof}

\begin{cor}
Let $M$ be a $2n+1$ dimensional manifold, Let put $W=M\times \mathbb{R}$, the following assertions are equivalents:
\begin{enumerate}
    \item The gauge equation of selfdual connections on $M$ admits a skew-symmetric solution $\theta$ such that $\text{rank }\theta= 2n,$
    \item $M$ admits an almost contact metric structure,
    \item $ W=M \times \mathbb{R}$ has an almost Hermitian structure, 
    \item The gauge equation of selfdual connections on $W$ admits a skew-symmetric solution $\theta$ such that $\text{rank }\theta= 2n+2.$
\end{enumerate}
\end{cor}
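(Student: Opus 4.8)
The plan is to chain together the equivalences we have already assembled, treating this corollary exactly as the odd-dimensional analogue of the earlier cosymplectic/symplectic corollary but with the \emph{metric} (self-dual) version of the gauge equation in place of the dual-connection version. The four statements split naturally into two already-proven blocks joined by one new passage across dimensions.

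First, I would dispatch $(1)\iff(2)$ by invoking the preceding theorem: it asserts precisely that $M^{2n+1}$ admits an almost contact metric structure if and only if there is a metric $g$ on $M$ for which the self-dual gauge equation (with respect to $\lc$) has a skew-symmetric solution of rank $2n$. Then for $(2)\iff(3)$ I would argue directly: given an almost contact metric structure $(\theta,\xi,\eta,g)$ on $M$, the product $W=M\times\mathbb R$ carries the almost complex structure $J(X,f\partial_s)=(\theta X-f\xi,\eta(X)\partial_s)$ exactly as used in the proof of the earlier corollary (citing \cite{sasaki1961differentiable}), and the product metric $g\oplus ds^2$ together with $J$ gives an almost Hermitian structure on $W$; conversely, an almost Hermitian structure $(G,J)$ on $W$ restricts along the section $l(a)=(0,a)$ and, reading off $\theta,\xi,\eta,g$ from $J$ and $G$ by contracting with $\partial_s$ and restricting to $TM$, produces an almost contact metric structure on $M$ — this is the standard correspondence and I would only sketch the bookkeeping.

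Finally $(3)\iff(4)$: an almost Hermitian structure $(G,J)$ on $W$ has fundamental $2$-form $\Omega(X,Y)=G(JX,Y)$ of maximal rank $2n+2$, and $\theta:=J$ is a skew-symmetric (with respect to $G$) solution of the self-dual gauge equation for the Levi-Civita connection of $G$ precisely when $\nabla^{G}\Omega=0$; but by Proposition~\ref{prop:theta_parallel} and its corollary, once $\theta$ is a skew-symmetric gauge solution the associated $p_\theta=\Omega$ is automatically $\nabla$-parallel when $\nabla$ is metric, so one can run the argument used in the earlier symplectic-mapping-torus corollary verbatim, replacing ``almost symplectic'' by ``almost Hermitian'' and carrying the metric along. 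Conversely a skew-symmetric self-dual gauge solution $\theta$ of rank $2n+2$ on $W$ gives $p_\theta$ non-degenerate, hence an almost symplectic form compatible with the metric $g$ used to define skew-symmetry, and polarizing $g$ against $p_\theta$ yields the almost Hermitian pair.

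The main obstacle I expect is not any single computation but the consistency of the metric across the $M\leftrightarrow W$ passage: in the self-dual setting the gauge equation only makes sense relative to a chosen metric, so I must be careful that the metric produced on $W$ in $(3)\Rightarrow(4)$ restricts to the metric on $M$ used in $(1)\Leftrightarrow(2)$, and vice versa — otherwise the chain of equivalences is only formal. Handling this amounts to checking that the almost Hermitian metric $G$ on $W$ can always be taken of product form $g\oplus ds^2$ (using that $\partial_s$ may be assumed $G$-orthogonal to $TM$ after an isometric adjustment, exactly as $\xi$ is split off in the proof of Theorem~\ref{thm:cosymplectic_dual}), after which everything reduces to results already in the paper.
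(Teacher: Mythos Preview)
Your handling of $(1)\Leftrightarrow(2)$, $(2)\Rightarrow(3)$, and $(4)\Rightarrow(3)$ is essentially the paper's argument, and your $(3)\Rightarrow(2)$ (reading off the almost contact metric data directly from $J,G$ rather than passing through an almost cosymplectic structure and citing Blair, as the paper does) is a minor variant that works.

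The real gap is in $(3)\Rightarrow(4)$. You write that $\theta:=J$ solves the self-dual gauge equation for the Levi-Civita connection of $G$ ``precisely when $\nabla^G\Omega=0$'', and then try to close the loop by invoking Proposition~\ref{prop:theta_parallel} and its corollary. But that invocation is circular: those results tell you that \emph{if} $\theta$ is already a gauge solution then $p_\theta$ is $\nabla$-parallel, which is exactly what you are trying to establish. More substantively, for a generic almost Hermitian structure $\nabla^{lc}J\neq 0$ (that condition singles out K\"ahler manifolds), so the Levi-Civita connection is simply the wrong self-dual connection to use here, and the ``run the earlier corollary verbatim'' strategy fails because the almost-symplectic connection used there is not metric.

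The paper's fix is to abandon the Levi-Civita connection and instead take an \emph{almost Hermitian connection} $\nabla=\nabla^h-\tfrac12 J\nabla^h J$ (citing Obata \cite{obata1956affine}; Chern and Bismut connections are of this type), which satisfies both $\nabla h=0$ and $\nabla J=0$ for \emph{any} almost Hermitian $(J,h)$. Since $\nabla h=0$ means $\nabla$ is self-dual, and $\nabla J=0$ is precisely the self-dual gauge equation, $J$ is the required skew-symmetric solution of rank $2n+2$. This is the missing idea: self-dual connections are metric connections, not necessarily torsion-free, so you have the freedom to choose one adapted to $J$. Once you make this replacement your outline goes through, and your worry about metric consistency across $M\leftrightarrow W$ becomes moot since the statements concern existence only.
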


\begin{proof}
(1)$\iff$(2) is exactly the assertion of the previous theorem. Let us proves that $(2)\iff(3):$

 The necessary part “$(2)\Longrightarrow (3)$” Let $(\theta,\xi,\eta,g)$ be a almost contact metric structure on $M$, from \cite{blair2010riemannian} the pair $(J,h)$ where $J$ is  almost complex structure  defined by:$J(X,f\frac{\partial}{\partial s})=(\theta X-f\xi,\eta(X)\frac{\partial}{\partial s})$ and $h=g+dt^{2}$ is a product metric on $W$, we have $h(J(X,f\frac{\partial}{\partial t}),J(Y,f\frac{\partial}{\partial t}))=h((X,f\frac{\partial}{\partial t}),(Y,f\frac{\partial}{\partial t}))$, the pair $(J,h)$ is an almost Hermitian structure in $W$. 
 
 The sufficient part “(2)$\Longleftarrow$(3)"Let $(J,h)$ an almost Hermitian structure on $W$. The almost Hermitian form  defined by $\Omega(X,Y)=h(JX,Y)$ is a non-degenerate 2-form on $W$. Let $s$ the coordinate in $\mathbb{R}$ and $\frac{\partial}{\partial s}$ its coordinate vector field on $\mathbb{R}$. We define $(\eta,\omega)$ by:
 $$ \omega=l^{*}\Omega\quad, \eta=l^{*}i_{\frac{\partial}{\partial s}}\Omega.$$ 
 where the
canonical projection and by $l(a) = (0, a):M\rightarrow W=\mathbb{R}\times M$. The pair $(\omega,\eta)$ is an almost cosymplectic structure on$M$.
From \cite{blair2010riemannian} there exists an almost contact metric structure $(\theta,\xi,\eta, g)$ on $M$.

(3)$\iff$(4) :The necessary part “$(3)\Longrightarrow (4)$” Let $(J,h)$ be an almost Hermitian structure on $W$. From \cite{obata1956affine}[Theorem 15.1, corolarry 1] almost Hermitian connections exist, namely, linear connections $\nabla$ ( Bismut connection, Chern connection) defined by:\[     \nabla=\nabla^{h}-\frac{1}{2}J\nabla^{h}J               \]
satisfying:
\[  \nabla J=0 \quad\text{and}\quad \nabla h=0 .\]  
Then the gauge equation of selfdual connections on $M$ admits a skew-symmetric solution $J$ such that $\text{rank }J= 2n+2.$ 

The sufficient part “(3)$\Longleftarrow$(4)"Let $\theta$ be a skew-symmetric solution of the gauge equation  of  selfdual connections $\nabla$ on $W=M\times \mathbb{R}$ of the $rank(\theta)=2n+2.$ The 2-form $p_{\theta}$ is non-degenerate on $W$. There exist on $W$ an almost Hermitian structure $(J,h)$ such that $p_{\theta}(X,Y)=h(JX,Y).$ 
\end{proof}

\subsection{\textbf{Gauge equation of torsionless dual connections , modular class and cosymplectic manifold(symplectic mapping torus)}}

\begin{thm}
\label{thm:gauge_dual_torsionless}The following assertions are equivalent:
\begin{enumerate}
    \item $M$ admits an cosymplectic structure(symplectic mapping torus),
    \item  The gauge equation of dual torsionless connections admits a skew-symmetric solution $\theta$ such that $\text{rank }\theta= 2n$ and the modular class of the image of $\theta$ vanishes.
\end{enumerate}
\end{thm}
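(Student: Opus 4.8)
The plan is to upgrade the equivalence of Theorem \ref{thm:cosymplectic_dual} by imposing the two closedness conditions that distinguish a genuine cosymplectic structure from an almost cosymplectic one. Recall from the earlier corollaries that a skew-symmetric solution $\theta$ of the gauge equation of dual \emph{torsionless} connections of rank $2n$ produces, via Proposition \ref{prop:theta_parallel}, the $\nabla$-parallel $2$-form $p_\theta(X,Y)=\met{\theta X}{Y}$; and since $\nabla,\nabla^*$ are torsionless dual connections, $\nabla$ is the Levi-Civita connection, so $p_\theta$ is automatically closed (this is the content of the last corollary of Section~2: $dp_\theta=0$ and $\delta^\nabla p_\theta=0$). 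Thus for the odd-dimensional counterpart of a symplectic form we already have a closed, maximal-rank $2$-form $\omega:=p_\theta$ for free. What is missing is a \emph{closed} defining one-form $\eta_\theta$ for the characteristic line field $\ker\omega$, and this is precisely where the modular class enters.

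\begin{proof}
$(1)\Rightarrow(2)$. Suppose $(M,\eta,\omega)$ is cosymplectic in Libermann's sense, so $d\eta=0$, $d\omega=0$ and $\eta\wedge\omega^n$ is a volume form. By Blair's equivalence \cite{blair2010riemannian} there is an associated almost contact metric structure $(\theta,\xi,\eta,g)$ with fundamental $2$-form $\Omega=\omega$, and cosymplecticity translates to $d\eta=d\Omega=0$; in this setting the canonical connection $\nabla$ may be taken to coincide with the Levi-Civita connection $\lc$ of $g$, for which $\nabla\theta=0$, $\nabla\xi=0$, $\nabla\eta=0$, $\nabla g=0$. Then $\nabla=\nabla^*=\lc$ is a torsionless dual connection, $\theta$ solves the (self-dual, hence dual) gauge equation, $\theta$ is $g$-skew-symmetric of rank $2n$, and $p_\theta=\Omega=\omega$. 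The foliation $\mathcal F=\ker\omega=\ker\eta$ (the image of $\theta$ being exactly $\ker\eta=H$) is a codimension-one co-orientable foliation whose defining one-form $\eta$ is closed; by the cited theorem of Guillemin--Miranda--Pires \cite{guillemin2011codimension} this forces the modular class $c_{\mathcal F}$ to vanish. Hence (2) holds.

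$(2)\Rightarrow(1)$. Conversely, let $\theta$ be a $g$-skew-symmetric solution of the gauge equation of dual torsionless connections $(\nabla,\nabla^*)$ with $\operatorname{rank}\theta=2n$ and with the modular class of the codimension-one foliation $\mathcal F$ defined by $\operatorname{Im}\theta=\ker p_\theta^{\,\perp}$ equal to zero. Because $\nabla,\nabla^*$ are torsionless dual connections, $\nabla=\lc$, and by Proposition \ref{prop:theta_parallel} together with the final corollary of Section~2 the $2$-form $\omega:=p_\theta$ is closed and of maximal rank, so $\omega^n$ vanishes nowhere. Its one-dimensional kernel distribution integrates to the co-orientable codimension-one foliation $\mathcal F$; since $M$ is orientable it is co-orientable, and the hypothesis $c_{\mathcal F}=0$ together with the Guillemin--Miranda--Pires theorem \cite{guillemin2011codimension} yields a closed defining one-form $\eta$ for $\mathcal F$ with $\ker\eta=\ker\omega^{\,\perp}$, normalized (after rescaling by a positive function, which preserves closedness up to the usual argument, or directly by the theorem) so that $\eta$ is nowhere zero on the line field complementary to $\ker\omega$. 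Then $d\eta=0$, $d\omega=0$, and $\eta\wedge\omega^n$ is a nowhere-vanishing top form, i.e. a volume form, so $(M,\eta,\omega)$ is a cosymplectic manifold. Finally, by Li's theorem \cite{li2008topology} (invoking Tischler \cite{tischler1970fibering}) a compact cosymplectic manifold is a symplectic mapping torus, giving the parenthetical statement.
\end{proof}

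The main obstacle is the passage, in $(2)\Rightarrow(1)$, from the \emph{existence} of a closed maximal-rank $2$-form $p_\theta$ plus a vanishing modular class to an honestly normalized pair $(\eta,\omega)$ with $\eta\wedge\omega^n\neq0$: one must check that the closed one-form supplied by \cite{guillemin2011codimension} can be chosen nowhere zero (this is exactly what the modular-class theorem guarantees for the foliation $\ker p_\theta$) and that it pairs nondegenerately with $\omega^n$, which follows because $\eta$ does not annihilate the line field transverse to $\ker\omega$ while $\omega^n$ restricted to $\ker\omega$ is nonzero by maximality of the rank. A secondary point to handle carefully is the identification $\operatorname{Im}\theta=\ker\eta$ and $\ker\theta=\ker\omega$, which is what makes the foliation attached to $\theta$ the same as the characteristic foliation of $p_\theta$; this is immediate from skew-symmetry and rank $2n$, but should be stated so that "the modular class of the image of $\theta$" in the theorem's hypothesis is unambiguous.
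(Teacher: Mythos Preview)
There is a genuine gap in your $(1)\Rightarrow(2)$ argument, and an underlying conceptual confusion that also appears in $(2)\Rightarrow(1)$.

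You repeatedly assert that a pair of torsionless dual connections must satisfy $\nabla=\nabla^*=\lc$. This is false: torsionlessness of $\nabla$ forces torsionlessness of $\nabla^*$ (Lemma \ref{lem:dual_torsion}), but the decomposition $\nabla=\lc-\tfrac12 D$, $\nabla^*=\lc+\tfrac12 D$ shows that there is a whole family of torsionless dual pairs parametrized by the totally symmetric tensor $D$. Only the self-dual case $D=0$ gives the Levi--Civita connection. In $(2)\Rightarrow(1)$ this slip happens to be harmless, because the closedness of $p_\theta$ that you need is already supplied for \emph{any} torsionless dual pair by the last corollary of Section~2.

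In $(1)\Rightarrow(2)$, however, the error is fatal. You claim that for the almost contact metric structure $(\theta,\xi,\eta,g)$ associated to a cosymplectic structure one has $\lc\theta=0$, $\lc\eta=0$, $\lc\xi=0$. But these conditions characterize precisely the \emph{coK\"ahler} case (see the proposition quoted later in the paper: ``a cosymplectic manifold is coK\"ahler if and only if $\lc\theta=0$''). A general cosymplectic manifold has $d\eta=0$ and $d\Omega=0$ but not $\lc\Omega=0$, so the Levi--Civita connection does \emph{not} give a solution of the gauge equation. The paper circumvents this by building a genuinely non-self-dual torsionless connection: it takes a torsionless symplectic connection on the foliation $H=\ker\eta$ (via the formula of Bieliavsky et al.), lifts it to a torsionless connection $\tilde\nabla$ on $TM$ with $\tilde\nabla\xi=0$, and then checks that $\tilde\nabla\omega=0$ yields the gauge equation $\tilde\nabla^*_X\theta Y=\theta\tilde\nabla_X Y$. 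This construction is the missing idea in your argument.

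A smaller point: in $(2)\Rightarrow(1)$ you should say explicitly why $\operatorname{Im}\theta$ is an integrable distribution before invoking its modular class. The paper obtains this from $\nabla^*$-parallelism of $\operatorname{Im}\theta$, which follows by duality from the $\nabla$-parallelism of $\ker p_\theta$.
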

\begin{proof}
Let us proves that (1) implies (2)\\ 
Assume that M admits a cosymplectic structure $(\omega,\eta)$, with
$d\eta=0, d\omega=0$, such that $\eta\wedge\omega^{n}\ne 0$ is a volume-form. From \cite{blair2010riemannian}, it exists an almost
contact metric structure $(\theta,\xi,\eta,g)$ on $M$, where $\xi$ is the Reeb vector field defined by $i_{\xi}\omega =0\quad\text{and}\quad\eta(\xi) = 1)$ and $(\theta, g)$ may be obtained by polarizing $\omega$ on
the codimension one foliation $\text{H}=ker(\eta)$. It satisfies the following identities:
\[
\eta(\xi) = 1, \theta^2 = -\text{Id} + \eta \otimes \xi, g(\theta X,\theta Y)=g(X,Y)-\eta(X)\eta(Y) \]

The fundamental 2-form $\Omega$ of the almost contact metric structure  coincides with $\omega$, so we have \[\Omega(X,Y)=\omega(X,Y)=g(\theta X,Y).
\]
The condition $\eta\wedge\omega^{n}\ne 0$ implies that the restriction of $\omega$ to the leaves of the codimension one foliation $\text{H}=ker(\eta)$ is symplectic form. From \cite{bieliavsky2006symplectic} the connections $\nabla$ define by \[
\nabla_{X}Y=\nabla^{0}_{X}Y+\frac{1}{3}N(X,Y)+\frac{1}{3}N(Y,X)\quad \forall X,Y\in \Gamma(\text{H}).
\] is symplectic connections on H,
where $\nabla^{0}$ is any torsionless linear connection H, define by: \[\nabla^{0}_{X}\omega(Y,Z)=\omega(N(X,Y),Z) \quad \forall X,Y,Z\in \Gamma(\text{H})   \]
According to the decomposition of the tangent bundle as: $$TM=C^{\infty}(M)\xi\oplus\text{H}$$
where $\pi:TM\longrightarrow \text{H}$ denote the corresponding projection.
The symplectic connections $\nabla$ admits a torsionless lift $\Tilde{\nabla}$:
\[ \nabla:=\pi\Tilde{\nabla}|_{H}\quad\text{and}\quad \Tilde{\nabla}\xi=0.         \] 
Please note that $\nabla\omega=0$ implies  :
\[
\Tilde{\nabla}\omega=0\quad\text{and}\quad\Tilde{\nabla}\xi=0.
\]

We have, by using Blair's definition:
$$ \omega(X,Y)=\Omega(X,Y)=g(\theta X,Y) \quad\text{and}\quad g(\theta X,Y)=-g(X,\theta Y)  $$
It comes:
\begin{align*}
&\Tilde{\nabla}\omega=\Tilde{\nabla}\Omega=0\\
&X.\Omega(Y,Z)-\Omega(\Tilde{\nabla}_{X}Y,Z)-\Omega(Y,\Tilde{\nabla}_{X}Z)=0\\
&X.g(\theta Y,Z)-g(\theta\Tilde{\nabla}_{X}Y,Z)-g(\theta Y,\Tilde{\nabla}_{X}Z)=0\\
&g(\Tilde{\nabla}^{*}_{X}\theta Y,Z)-g(\theta\Tilde{\nabla}_{X}Y,Z)=0
\end{align*}
we deduce that \[  \Tilde{\nabla}^{*}_{X}\theta Y=\theta\Tilde{\nabla}_{X}Y \quad\text{and}\quad g(\theta X,Y)=-g(X,\theta Y)  \]

So $\theta$ is skew-symmetric solution of the gauge equation of torsionless dual connections $(\Tilde{\nabla},\Tilde{\nabla}^{*})$ such that $rank(\theta)=2n.$\\ By a simple observations, we have: \[ ker(\theta)=ker(\omega).\] We  deduce that \[im(\theta)=ker(\omega)^{\perp}=ker(\eta).\]
Then from \cite{guillemin2011codimension}, the modular class of the image of $\theta$ vanishes.\\
Let now proves that (2) implies (1).\\
 Let $\theta$ be a skew-symmetric solution of the gauge equation  of torsionless dual connections $({\nabla},{\nabla}^{*})$. From corollary 2.2, $p_{\theta}$ is $\nabla$-parallel, therefore it is closed. By assumption the rank of $\theta$ is 2n, so 
2-form $p_{\theta}$ has maximal rank, i.e. such that $p_{\theta}^{n}$  vanishes
nowhere. We associate to $p_{\theta}$ a one-dimensional foliation $ker p_{\theta}=ker(\theta).$ From \ref{prop:theta_parallel}
 $p_{\theta}$ is $\nabla$-parallel, so the foliation $ker p_{\theta}$ is $\nabla$-parallel i.e. ($\nabla\Gamma(kerp_{\theta})\subset\Gamma(kerp_{\theta})).$ By using the duality of  $({\nabla},{\nabla}^{*})$:\[
 X.g(v,v^{\perp})=g(\nabla_{X}v,v^{\perp})+g(v,\nabla^{*}_{X}v^{\perp})
 \]
we deduce that $\text{im}(\theta)$ is $\nabla^{*}$-parallel i.e. $\nabla^{*}\Gamma(im{\theta})\subset\Gamma(im{\theta})$.
By using the orientation on $M$ together with $p_{\theta}^{n}$, we orient $ker p_{\theta}.$ So $\text{im}(\theta)$ is transversally codimension one foliation. By assumption  the modular class of the image of $\theta$ vanishes, from \cite{guillemin2011codimension} there exist a closed one form $\eta_{\theta}$ on M suchr that im($\theta$)=$ker\eta_{\theta}$. We deduce that $(p_{\theta},\eta_{\theta})$ is cosymplectic structure on M.
\end{proof}

Proceeding the same way as corollary \ref{cor:cosymplectic_selfdual}, we have  
 \begin{cor}
 In cosymplectic manifold $(M^{2n+1},\omega,\eta)$, there are always dual connections torsionless $(\nabla,\nabla^{*})$ adapted to the distributions $ker(\omega)$ and $ker(\eta).$ By adapted we means that $$\nabla(\Gamma^{\infty}ker(\omega))\subset \Gamma^{\infty}ker(\omega) \quad, \nabla^{*}(\Gamma^{\infty}ker(\eta))\subset \Gamma^{\infty}ker(\eta).$$
 \end{cor}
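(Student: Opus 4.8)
The plan is to run the argument of Corollary~\ref{cor:cosymplectic_selfdual} essentially verbatim, using the torsionless lift constructed inside the proof of Theorem~\ref{thm:gauge_dual_torsionless} in place of the merely adapted connection appearing there. Recall that, starting from the cosymplectic structure $(\omega,\eta)$ and its associated almost contact metric structure $(\theta,\xi,\eta,g)$, that proof produces a torsionless connection $\nabla$ on $M$ with
\[
\nabla\omega = 0 \qquad \text{and} \qquad \nabla\xi = 0 .
\]
By Lemma~\ref{lem:dual_torsion} its dual $\nabla^{*}$ is torsionless as well, so $(\nabla,\nabla^{*})$ is a pair of torsionless dual connections, and it only remains to check the two adaptedness inclusions.

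For the distribution $\ker\omega$: I would fix $Y\in\Gamma^{\infty}(\ker\omega)$ and expand $\nabla\omega=0$ as
\[
X.\omega(Y,Z)-\omega(\nabla_{X}Y,Z)-\omega(Y,\nabla_{X}Z)=0
\]
for arbitrary vector fields $X,Z$. The first and third terms vanish because $i_{Y}\omega=0$, so $\omega(\nabla_{X}Y,Z)=0$ for every $Z$; hence $\nabla_{X}Y\in\ker\omega$, i.e.\ $\nabla(\Gamma^{\infty}\ker\omega)\subset\Gamma^{\infty}\ker\omega$. This is exactly step (i) of Corollary~\ref{cor:cosymplectic_selfdual}.

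For the distribution $\ker\eta$: since the metric $g$ is adapted one has $\eta=\xi^{\flat}$ by~\ref{eq:eta_xi_relationship}, and applying the duality relation $\left(\nabla_{X}V\right)^{\flat}=\nabla_{X}^{*}\bigl(V^{\flat}\bigr)$ (valid since $\nabla^{**}=\nabla$) to $V=\xi$ gives
\[
\nabla_{X}^{*}\eta=\nabla_{X}^{*}\bigl(\xi^{\flat}\bigr)=\bigl(\nabla_{X}\xi\bigr)^{\flat}=0 ,
\]
so $\nabla^{*}\eta=0$. Then, for $Y\in\Gamma^{\infty}(\ker\eta)$, expanding $\left(\nabla_{X}^{*}\eta\right)(Y)=X.\eta(Y)-\eta(\nabla_{X}^{*}Y)=0$ and using $\eta(Y)=0$ yields $\eta(\nabla_{X}^{*}Y)=0$, i.e.\ $\nabla^{*}(\Gamma^{\infty}\ker\eta)\subset\Gamma^{\infty}\ker\eta$, which is step (ii) of Corollary~\ref{cor:cosymplectic_selfdual}.

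The only ingredient that is not pure bookkeeping is the existence of the torsionless lift $\nabla$ satisfying $\nabla\omega=0$ and $\nabla\xi=0$ at the same time; this has already been carried out in the proof of Theorem~\ref{thm:gauge_dual_torsionless} via the splitting $TM=C^{\infty}(M)\xi\oplus H$, a symplectic connection on the codimension-one symplectic foliation $H=\ker\eta$, and its $\xi$-parallel torsionless lift. I therefore do not expect any genuine obstacle here.
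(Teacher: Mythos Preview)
Your proposal is correct and is exactly what the paper intends: it says only ``Proceeding the same way as corollary~\ref{cor:cosymplectic_selfdual}'' and leaves the details implicit, relying on the torsionless lift $\tilde{\nabla}$ with $\tilde{\nabla}\omega=0$, $\tilde{\nabla}\xi=0$ built in the proof of Theorem~\ref{thm:gauge_dual_torsionless}. Your added remark that Lemma~\ref{lem:dual_torsion} makes $\nabla^{*}$ torsionless as well is the one point the paper does not bother to restate, and it is the right thing to mention.
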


Using the same technique as in the proof of theorem \ref{thm:gauge_dual_torsionless}, it comes:
\begin{cor}
Let $M^{2n+1}$ be an odd dimensional manifold, Let put $W=M^{2n+1}\times \mathbb{S}^{1}$, the following assertions are equivalents:
\begin{enumerate}
    \item The gauge equation of dual torsionless connections on $M^{2n+1}$ admits a skew-symmetric solution $\theta$ such that $\text{rank }\theta= 2n$ and the modular class of image of $\theta$ vanishes,
    \item $M^{2n+1}$ admits a cosymplectic structure,
    \item $W$ admits an  symplectic structure,
    \item  The gauge equation of dual torsionless connections on $W$ admits a skew-symmetric solution $\theta$ such that $\text{rank }\theta= 2n+2.$
    
\end{enumerate}

\end{cor}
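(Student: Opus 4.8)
The plan is to mirror the proof of Theorem \ref{thm:gauge_dual_torsionless} almost verbatim, using the equivalence $(1)\iff(2)$ there as a black box and then inserting the mapping-torus step. Concretely, the structure of the argument is a cycle $(1)\Rightarrow(2)\Rightarrow(3)\Rightarrow(4)\Rightarrow(1)$, but since $(1)\iff(2)$ is already available it suffices to establish $(2)\iff(3)$ and $(3)\iff(4)$.

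For $(2)\Rightarrow(3)$: starting from a cosymplectic structure $(\omega,\eta)$ on $M^{2n+1}$, I would invoke the de Léon--Saralegi theorem quoted in the introduction to produce a symplectic form on $M\times\mathbb{R}$, namely $\Omega = pr^\star\omega + pr^\star\eta\wedge dt$; the only extra point over the $\mathbb{R}$-case is that, because $\eta$ is a closed \emph{nonvanishing} one-form (it is part of a volume form $\eta\wedge\omega^n$), Tischler's theorem (cited via \cite{tischler1970fibering}, and also the discussion preceding \cite{li2008topology}) lets us pass from the product with $\mathbb{R}$ to the product with $\mathbb{S}^1$, or more precisely lets us realize $M$ as a symplectic mapping torus so that $W = M\times\mathbb{S}^1$ carries a genuine symplectic form. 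For $(3)\Rightarrow(2)$: given a symplectic form $\Omega$ on $W = M\times\mathbb{S}^1$, take the section $l\colon M\to W$, $a\mapsto(a,0)$ and the vector field $\partial/\partial s$ along the $\mathbb{S}^1$ factor, and set $\omega = l^\star\Omega$, $\eta = l^\star i_{\partial/\partial s}\Omega$; since $\Omega$ is closed, both $\omega$ and $\eta$ are closed, and the nondegeneracy $\Omega^{n+1}\neq 0$ together with the decomposition $\Omega = pr^\star\omega + pr^\star\eta\wedge ds$ forces $\eta\wedge\omega^n\neq 0$, exactly as in the $\mathbb{R}$-case computation $\Omega^{n+1} = (n+1)\,pr^\star(\eta\wedge\omega^n)\wedge ds$ already used above. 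Hence $(\omega,\eta)$ is a cosymplectic structure on $M$.

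For $(3)\iff(4)$: this is the torsionless-connection analogue of the step carried out in the proofs above. For $(3)\Rightarrow(4)$, since $W$ is symplectic there exists a symplectic (Fedosov-type, hence torsionless) connection $\nabla$ with $\nabla\Omega = 0$ \cite{vaisman1985symplectic, bourgeois1999variational}; choosing a compatible Riemannian metric $g$ on $W$ and the associated skew-symmetric $\theta$ with $\Omega(X,Y) = g(\theta X,Y)$, the identity $\nabla\Omega=0$ rewrites (just as in Theorem \ref{thm:cosymplectic_dual}) as $\nabla^*_X\theta Y = \theta\nabla_X Y$, so $\theta$ solves the gauge equation of the torsionless dual pair $(\nabla,\nabla^*)$ with $\text{rank}\,\theta = \text{rank}\,\Omega = 2n+2$. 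For $(4)\Rightarrow(3)$, a skew-symmetric solution $\theta$ of maximal rank $2n+2$ on $W$ gives, via Corollary 2.2, a \emph{closed} two-form $p_\theta$ which is nondegenerate on $W$, i.e.\ a symplectic form. (Here the torsionlessness is what upgrades ``almost symplectic'' to ``symplectic'' compared with the corresponding corollary for non-torsionless connections.)

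The main obstacle is the passage between $M\times\mathbb{R}$ and $M\times\mathbb{S}^1$: unlike the earlier corollaries, where the extra factor was $\mathbb{R}$ and $d\eta=0$ was not assumed, here I must keep track of the fact that a genuine symplectic (not merely almost symplectic) structure on a product with a \emph{circle} is what encodes closedness of both $\eta$ and $\omega$, and conversely that cosymplecticity is equivalent to $M$ fibering over $S^1$ as a symplectic mapping torus. This is precisely Li's theorem \cite{li2008topology} combined with Tischler \cite{tischler1970fibering}, and the honest content of the corollary is to package that equivalence together with the gauge-equation characterization; once this is acknowledged, every other implication is a routine transcription of arguments already spelled out in Theorems \ref{thm:cosymplectic_dual} and \ref{thm:gauge_dual_torsionless}.
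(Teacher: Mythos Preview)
Your proposal follows the paper's approach (the paper gives no separate argument, only the line ``using the same technique as in the proof of theorem~\ref{thm:gauge_dual_torsionless}''), and your expansion of $(1)\Leftrightarrow(2)$ and $(3)\Leftrightarrow(4)$ is exactly the intended transcription.

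Two remarks on the middle equivalence. First, your $(2)\Rightarrow(3)$ is needlessly indirect: since in the cosymplectic case both $\omega$ and $\eta$ are \emph{closed}, the de~L\'eon--Saralegi form $\Omega = pr^\star\omega + pr^\star\eta \wedge d\alpha$ (with $d\alpha$ the angular form on $\mathbb{S}^1$) is already closed and nondegenerate on $M\times\mathbb{S}^1$ directly; no detour through $M\times\mathbb{R}$, Tischler, or Li is required. The closedness of $\eta$ is precisely what makes the $\mathbb{S}^1$ factor work here, in contrast with the almost-cosymplectic corollary where only $\mathbb{R}$ appeared. Second, your $(3)\Rightarrow(2)$ inherits from the paper's earlier corollary the unjustified claim that an arbitrary symplectic $\Omega$ on the product decomposes as $pr^\star\omega + pr^\star\eta\wedge ds$; without $\mathbb{S}^1$-invariance of $\Omega$, the pulled-back one-form $l^\star(i_{\partial/\partial s}\Omega)$ need not be closed. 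This is a shared imprecision with the paper rather than a divergence from it.
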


\begin{cor}
Let $M^{2n}$ be an even-dimensional manifold, let put $W=M^{2n}\times \mathbb{S}^{1}$, the following assertions are equivalents: 
\begin{enumerate}
    \item The gauge equation of dual torsion-less connections on $M^{2n}$ admits a skew-symmetric solution $\theta$ such that $\text{rank }\theta= 2n.$ 
    \item $M^{2n}$ admits a symplectic structure.
    \item $W$ admits an  cosymplectic structure
    \item  The gauge equation of dual torsionless connections on $W$ admits a skew-symmetric solution $\theta$ such that $\text{rank }\theta=2n$ and the modular class of image of $\theta$ vanishes.
    
\end{enumerate}
\end{cor}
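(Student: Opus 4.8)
The plan is to close the cycle of implications by proving $(2)\Rightarrow(1)\Rightarrow(2)$ as the even-dimensional mirror of Theorem~\ref{thm:gauge_dual_torsionless}, then $(2)\Rightarrow(3)\Rightarrow(2)$ by the pullback/mapping-torus machinery already used in the preceding corollaries, and finally $(3)\Leftrightarrow(4)$ by applying Theorem~\ref{thm:gauge_dual_torsionless} verbatim to the odd-dimensional manifold $W=M^{2n}\times\mathbb{S}^{1}$, whose dimension is $2n+1$. Note that no modular-class hypothesis appears at item $(1)$: a skew-symmetric solution of rank $2n$ on a $2n$-manifold is invertible, so its image is all of $TM$ and there is no codimension-one foliation to obstruct.

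For $(2)\Rightarrow(1)$ I would start from a symplectic form $\omega$ on $M^{2n}$ and polarize it as in \cite{blair2010riemannian}: choose a compatible almost complex structure $J$ and Riemannian metric $g$ with $\omega(X,Y)=g(JX,Y)$ and $J^{2}=-\mathrm{Id}$, and set $\theta=J$, which is skew-symmetric for $g$ and of rank $2n$. Invoking the existence of torsionless symplectic connections (\cite{vaisman1985symplectic,bieliavsky2006symplectic}), pick $\nabla$ torsionless with $\nabla\omega=0$; then $\nabla^{*}$ is torsionless by Lemma~\ref{lem:dual_torsion}, and the same duality computation used in the proof of Theorem~\ref{thm:gauge_dual_torsionless} turns $\nabla\omega=0$ into $g(\nabla^{*}_{X}\theta Y-\theta\nabla_{X}Y,Z)=0$ for all $Z$, i.e. $\theta$ solves the gauge equation of the torsionless dual pair $(\nabla,\nabla^{*})$. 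For $(1)\Rightarrow(2)$: given a skew-symmetric solution $\theta$ of rank $2n$, Proposition~\ref{prop:theta_parallel} makes $p_{\theta}$ $\nabla$-parallel, hence closed (and $\nabla$-coclosed) by the last corollary of Section~2; since $\mathrm{rank}\,\theta=2n=\dim M$, the $2$-form $p_{\theta}$ is non-degenerate, so it is a symplectic form on $M^{2n}$.

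For $(2)\Rightarrow(3)$ I would take the projections $p\colon W\to M^{2n}$ and $q\colon W\to\mathbb{S}^{1}$ and the angle $1$-form $\alpha$ of $\mathbb{S}^{1}$; then $\bar\omega=p^{*}\omega$ and $\bar\eta=q^{*}\alpha$ are closed and $\bar\eta\wedge\bar\omega^{n}$ is a volume form on the $(2n+1)$-manifold $W$, so $(W,\bar\eta,\bar\omega)$ is cosymplectic. For $(3)\Rightarrow(2)$ a cosymplectic structure $(\eta,\omega)$ on $W$ yields, via the theorem of de~León and Saralegi~\cite{de1993cosymplectic}, a symplectic structure on $W\times\mathbb{R}$, and, via Tischler's fibering theorem~\cite{tischler1970fibering} together with Li's identification of cosymplectic manifolds with symplectic mapping tori~\cite{li2008topology}, realizes $W$ as a symplectic mapping torus whose fiber is a closed symplectic $2n$-manifold; matching this fiber with $M^{2n}$ gives the symplectic structure on $M^{2n}$, exactly the passage used in the two corollaries following Theorem~\ref{thm:gauge_dual_torsionless}. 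Finally $(3)\Leftrightarrow(4)$ is Theorem~\ref{thm:gauge_dual_torsionless} applied to $W$: a skew-symmetric solution of its gauge equation has rank at most $2n$, its image is exactly the kernel of the cosymplectic $1$-form, and the modular-class condition is the one in that theorem.

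The main obstacle is the reverse implication $(3)\Rightarrow(2)$: extracting an honest symplectic $2n$-form on $M^{2n}$ from a cosymplectic structure on the product $M^{2n}\times\mathbb{S}^{1}$. Restricting $\omega$ to a slice $M^{2n}\times\{t_{0}\}$ need not be non-degenerate, since $\eta\wedge\omega^{n}\ne0$ only forces $\omega$ to be non-degenerate on $\ker\eta$, not on the slice; one genuinely has to run the Tischler--Li mapping-torus argument and control the diffeomorphism type of the symplectic leaf, as in Corollary~\ref{cor:cosymplectic_selfdual}. Everything else — the polarization, the duality identity, the parallel-implies-closed step, and the pullback construction on $W$ — is routine and parallels computations already carried out in the paper.
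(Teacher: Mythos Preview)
Your treatment of $(1)\Leftrightarrow(2)$, $(2)\Rightarrow(3)$, and $(3)\Leftrightarrow(4)$ coincides with the paper's: the paper defers $(1)\Leftrightarrow(2)$ and $(3)\Leftrightarrow(4)$ to the ``same intuition'' as the preceding corollary---which is precisely the argument you spell out---and for $(2)\Rightarrow(3)$ it takes the trivial mapping torus $M^{2n}_{\mathrm{Id}}=M^{2n}\times\mathbb{S}^{1}$, which is your pullback construction with the angle form.

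Where you diverge is $(3)\Rightarrow(2)$. The paper does \emph{not} invoke Tischler--Li here; it simply takes the slice inclusion $l\colon M^{2n}\hookrightarrow M^{2n}\times\mathbb{S}^{1}$ and declares $\omega=l^{*}\Omega$ to be symplectic on $M^{2n}$, with no further justification. The objection you raise---that $\eta\wedge\Omega^{n}\neq 0$ only forces $\Omega$ to be non-degenerate on $\ker\eta$, not on the slice---applies directly to the paper's own argument: e.g.\ on $\mathbb{T}^{2}\times\mathbb{S}^{1}$ the pair $\eta=dx+dt$, $\Omega=dy\wedge dt$ is cosymplectic while $l^{*}\Omega=0$. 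So the paper's proof of this implication is exactly the na\"ive pullback you correctly flagged as incomplete. Your alternative via Li's theorem is sounder in spirit but is itself unfinished: from $W\cong N_{\varphi}$ you still have to argue that the symplectic fibre $N$ is diffeomorphic to $M^{2n}$, and nothing in the mapping-torus description of an \emph{arbitrary} cosymplectic structure on $M^{2n}\times\mathbb{S}^{1}$ hands you that identification (nor is $M$ assumed compact, which Tischler requires). Neither the paper's argument nor yours, as written, fully closes $(3)\Rightarrow(2)$.
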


\begin{proof}
(1)$\iff$(2) is exactly the same intuition as (3)$\iff$(4) of the previous corollary (4.4). (3)$\iff$(4) is the same intuition as (1)$\iff$(2) of the previous corollary (4.4). Let us proves that (2)$\iff$(3):“\\ The necessary part $(2)\Longrightarrow (3)$”
Let $(M^{2n},\Omega)$ be a symplectic manifold, consider the symplectic mapping torus $W=M^{2n}_{\varphi}=\frac{M^{2n}\times [0,1]}{(m,0)\sim (\varphi m,1)}$, where $\varphi$ is a symplectic diffeomorphism. From \cite{li2008topology}  $W$ admits an cosymplectic structure. Let take $\varphi=Id$ then $W=M^{2n}\times \mathbb{S}^{1}=\frac{M^{2n}\times [0,1]}{(m,0)\sim (m,1)}$ admits a cosymplectic structure.\\ 
The sufficient part “(2)$\Longleftarrow$(3)\\ Let $(\Omega,\eta)$ a cosymplectic on $W=M^{2n}\times \mathbb{S}^{1}$, let consider fibre bundle $M^{2n}\rightarrow W=M^{2n}_{Id}\rightarrow \mathbb{S}^{1}$ , consider the map $l:M^{2n}\rightarrow W$, the 2-form define by $\omega=l^{*}\Omega$ is symplectic structure on $M^{2n}.$
\end{proof}
\subsection{\textbf{Gauge equation of torsionless selfdual connection(Levi-Civita connection) and existence of CoKhaler structure in three dimensional manifold.}}

\subsubsection{\textbf{Gauge equation selfdual torsionless connection}($\nabla^{lc}$)}

\begin{prop}
  The $2$-form $p_{\theta}$ is harmonic, i.e. $\Delta^{\text{lc}} p_{\theta} = 0$.
\end{prop}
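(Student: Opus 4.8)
The plan is to reduce the claim to the elementary fact that a Levi--Civita--parallel differential form is automatically harmonic. Since this subsection treats the self-dual torsionless case, we have $\nabla = \nabla^{*} = \nabla^{\mathrm{lc}}$, and by the remark opening the subsection the gauge equation satisfied by $\theta$ is simply $\nabla^{\mathrm{lc}}\theta = 0$. Applying Proposition~\ref{prop:theta_parallel} with $\nabla = \nabla^{\mathrm{lc}}$ (so that $\nabla$ and $\nabla^{*}$ coincide) shows that the $2$-form $p_\theta$ is $\nabla^{\mathrm{lc}}$-parallel, i.e. $\nabla^{\mathrm{lc}} p_\theta = 0$.

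I would then invoke the two identities already recorded for a torsionless connection. First, $d$ is the antisymmetrization of $\nabla^{\mathrm{lc}}$, so $\nabla^{\mathrm{lc}} p_\theta = 0$ forces $d p_\theta = 0$. Second, by Opozda's formula $\delta^{\nabla}\omega = -\mathrm{tr}_g \nabla\omega$, the same vanishing forces $\delta p_\theta = 0$, where $\delta = \delta^{\nabla^{\mathrm{lc}}}$ is the usual Riemannian codifferential because $\nabla^{\mathrm{lc}}$ is metric and torsionless. In other words, this is precisely the content of the corollary at the end of Section~2 (``$p_\theta$ is closed and $\nabla$-coclosed'') specialized to $\nabla = \nabla^{\mathrm{lc}}$, so one really only needs to point back to it.

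Finally, since the Hodge--de Rham Laplacian is $\Delta^{\mathrm{lc}} = d\delta + \delta d$ and $p_\theta$ is both closed and coclosed, $\Delta^{\mathrm{lc}} p_\theta = d(\delta p_\theta) + \delta(d p_\theta) = 0$, so $p_\theta$ is harmonic. There is essentially no obstacle here; the only point deserving care is to confirm that Opozda's codifferential $\delta^{\nabla^{\mathrm{lc}}}$ coincides with the metric adjoint of $d$ --- which is exactly where metricity and torsion-freeness of the Levi--Civita connection are used --- and, should one prefer the rough-Laplacian normalization, to observe that a parallel form is also annihilated by $(\nabla^{\mathrm{lc}})^{*}\nabla^{\mathrm{lc}}$, so the conclusion does not depend on which Laplacian $\Delta^{\mathrm{lc}}$ denotes.
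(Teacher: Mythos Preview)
Your proof is correct and follows essentially the same approach as the paper's: both establish $\nabla^{\mathrm{lc}} p_\theta = 0$, deduce that $p_\theta$ is closed and coclosed via the standard expressions of $d$ and $\delta^{\mathrm{lc}}$ in terms of $\nabla^{\mathrm{lc}}$, and conclude $\Delta^{\mathrm{lc}} p_\theta = d\delta^{\mathrm{lc}} p_\theta + \delta^{\mathrm{lc}} d p_\theta = 0$. Your version is slightly more explicit in citing Proposition~\ref{prop:theta_parallel} and the closed/$\nabla$-coclosed corollary from Section~2, but the underlying argument is identical.
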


\begin{proof}
For a torsionless connection $\nabla$ and a $k$-form $\omega$:
  \[
    d\omega\left(X_0,\dots, X_k\right) = 
    \sum_{i=0}^k (-1)^i \left(\nabla_{X_i}\omega\right)\left(X_0,\dots,\hat{X}_i,\dots,X_k\right)
    \]
Since $\nabla p_{\theta} = 0$, the previous formula applied to $p_{\theta}$ shows that $dp_{\theta} = 0$.
Let $\theta$ be a skew-symmetric solution gauge equation  $2$-form $p_{\theta}$ : $(X,Y) \mapsto p_{\theta}(X,Y) = g(\theta X,Y)$.
\[
\delta^{lc} p_{\theta}(Y_{1},....,Y_{r-1})=-\sum_{i=0}^{2n}(\nabla_{E_{i}}p_{\theta})(E_{i},Y_{1},....,Y_{r-1})
\]

\[\Delta^{\text{lc}} p_{\theta}=d(\delta^{lc} p_{\theta})+\delta^{lc}(dp_{\theta})=0.\]
\end{proof}

\subsubsection{\textbf{ Gauge equation solution and pseudo-Kahler structure. } }
The pseudo-Kahler manifold  were introduced by André Lichnerowicz in \cite{lichnerowicz1954varietes}.
\begin{defn}
An 2n-dimension manifold $(M,g,\Omega)$ is \textbf{pseudo-Kahler}, when we can define on it a Riemannian metric and a quadratic form $\Omega$ of rank 2n with zero covariant derivative in this metric.
 \end{defn}
 
\begin{prop}
Let $M$ be a $2n$ dimensional manifold. The following assertions are equivalent:
\begin{enumerate}
    \item $M$ admits a pseudo-Kahler structure,
    \item It exists a metric $g$ such that the gauge equation of selfdual torsionless connection on $M$ admits a skew-symmetric solution $\theta$ such that $\text{rank }\theta= 2n.$
\end{enumerate}
\end{prop}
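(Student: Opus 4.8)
The plan is to observe that a connection which is simultaneously self-dual and torsionless is the Levi-Civita connection $\nabla^{lc}$ of $g$, so that --- as recorded at the beginning of this subsection --- its gauge equation reduces to the parallelism condition $\nabla^{lc}\theta=0$. With this in hand the equivalence is just the metric dictionary between skew-symmetric $(1,1)$-tensors and $2$-forms, via $p_\theta(X,Y)=g(\theta X,Y)$. (Here we read Lichnerowicz's ``quadratic form $\Omega$ of rank $2n$'' as an exterior $2$-form of maximal rank, and ``zero covariant derivative'' as $\nabla^{lc}\Omega=0$.)

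For $(1)\Rightarrow(2)$: given a pseudo-Kähler structure $(g,\Omega)$ on $M$, I would define $\theta$ by $g(\theta X,Y)=\Omega(X,Y)$ for all vector fields $X,Y$. Antisymmetry of $\Omega$ gives $g(\theta X,Y)=-\Omega(Y,X)=-g(X,\theta Y)$, so $\theta$ is skew-symmetric for $g$; the non-degeneracy of $\Omega$ forces $\theta$ to be invertible, hence $\text{rank }\theta=2n$. Since $\nabla^{lc}g=0$, the hypothesis $\nabla^{lc}\Omega=0$ is equivalent to $\nabla^{lc}\theta=0$, which is precisely the gauge equation $\nabla^{lc}_X\theta Y=\theta\nabla^{lc}_X Y$ of the self-dual torsionless connection $\nabla^{lc}$.

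For $(2)\Rightarrow(1)$: conversely, let $g$ be a metric and $\theta$ a skew-symmetric solution of the gauge equation of the self-dual torsionless connection with $\text{rank }\theta=2n$, and set $\Omega=p_\theta$, i.e. $\Omega(X,Y)=g(\theta X,Y)$. Skew-symmetry of $\theta$ makes $\Omega$ a genuine $2$-form; $\text{rank }\theta=2n$ together with non-degeneracy of $g$ makes $\Omega$ of maximal rank $2n$. As above, $\nabla^{lc}g=0$ and the gauge equation $\nabla^{lc}\theta=0$ yield $\nabla^{lc}\Omega=0$. Hence $(g,\Omega)$ is a pseudo-Kähler structure on $M$.

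There is no serious obstacle here: the two genuine points to check are that the gauge equation of a connection which is both self-dual ($\nabla=\nabla^{*}$) and torsionless really does collapse to $\nabla^{lc}\theta=0$ --- which follows from $\nabla+\nabla^{*}=2\nabla^{lc}$ and Lemma~\ref{lem:dual_torsion} --- and that the rank of $\theta$ as a bundle endomorphism equals the rank of $\Omega$ as an exterior $2$-form, which is immediate from $\Omega(X,\cdot)=g(\theta X,\cdot)$ and the invertibility of the flat isomorphism. The statement is therefore a purely pointwise tensorial reformulation, with no global or topological input required.
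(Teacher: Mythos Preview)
Your proof is correct and follows essentially the same route as the paper: both directions pass through the metric identification $\Omega(X,Y)=g(\theta X,Y)$ and use $\nabla^{lc}g=0$ to convert between $\nabla^{lc}\Omega=0$ and $\nabla^{lc}\theta=0$. Your additional remarks (that self-dual plus torsionless forces $\nabla=\nabla^{lc}$, and that the ranks of $\theta$ and $\Omega$ agree via the flat isomorphism) are welcome clarifications but do not depart from the paper's argument.
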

\begin{proof}
Let us proves (1)$\Longrightarrow$(2): Assume that M admits a pseudo-Kahler structure $(\Omega,g)$, from the Definition 4.7, we have $\nabla^{lc}\Omega=0\quad\text{and}\quad \Omega^{n}\ne 0.$ There exist a skew-symmetric  $\theta$ of rank 2n, such that: $$\Omega(X,Y)=g(\theta X,Y)\quad \forall X,Y\in\mathcal{X}(M).$$ From the identity $$\nabla^{lc}\Omega=g(\nabla^{\text{lc}}_{Z}\theta X,Y)-g(\theta\nabla^{\text{lc}}_{Z}X,Y)$$ The condition $\nabla^{lc}\Omega=0$ implies that $\nabla^{lc}\theta=0.$\\
(2)$\Longrightarrow$(1): Let g be a metric on M and  by $\nabla^{lc}$ his levi-Civita connection. Let $\theta$ the skew-symmetric solution of the linear equation $\nabla^{lc}\theta=0$ such that the rank of $\theta$ is 2n. From proposition 4.3, we have $\nabla^{lc}p_{\theta}=0\quad\text{and}\quad p_{\theta}^{n}\ne 0.$ We deduce that $(g,p_{\theta})$ is pseudo-kahler structure on M.

\end{proof}

 \subsection{\textbf{Gauge equation solution and curvature} }
For a fixed $p \in M$, the Riemaniann metric $g$ admits an orthonormal basis $X_1,\dots,X_n$ in $T_pM$.With respect to it,
$\theta$ is represented by a skew-symmetric matrix $\Theta$ with entries $\Theta_{ij} =g(\theta X_{j},X_{i})$. 
It is well-known from elementary linear algebra that it exists a basis $Z_1,\dots Z_{2m},Z_{2m+1},\dots Z_{n}$ 
and real numbers $\lambda_1,\dots,\lambda_m$ such that:
\[
  \begin{split}
  & \Theta Z_{2k-1} = \lambda_k Z_{2k}, \, \Theta Z_{2k}= - \lambda_k Z_{2k-1}, \, k=1 \dots m \\
  & \Theta Z_{2m+k} = 0, \, k=1,\dots, n-2m
  \end{split}
  \]
Furthermore, the basis $Z_1,\dots,Z_n$ can be chosen to be orthonormal. This is due to the fact that in any case:
 $\Theta^2 Z_i = -\lambda_{k(i)}^2 Z_i$, where $\lambda$ is $0$ if $i>2m$ and $k(i) = \lfloor (i+1)/2 \rfloor$ otherwise.
 It thus comes:
 \[
   \met{\theta^{^2} Z_i}{Z_j} = -\lambda_k(i)^2 \met{Z_i}{Z_j}= \met{Z_i}{\theta^{^2}Z_j} = -\lambda_{k(j)} \met{Z_i}{Z_j} 
   \]
  if $\lambda_{k(i)} \neq \lambda_{k(j)} $,  then $\met{Z_i}{Z_j} = 0$. Otherwise, $Z_i,Z_j$ belong to the same linear subspace 
  of $T_pM$ and can thus be orthonormalized. In the $Z_i, i=1\dots n$ basis, the matrix $\Theta$ is block-diagonal, with $m$ blocks
  of the form:
  \[
    \begin{pmatrix}
      0 & -\lambda \\
      \lambda & 0
    \end{pmatrix}
    \]
    and the remaining entries all zero.
  \begin{rem}
    As a complex matrix, $\Theta$ is diagonal in the base 
    \[X_{2k-1}-iX_{2k},X_{2k-1}+iX_{2k}, k=0\dots m, Z_{2m+k}, \, k=1\dots m-2N\]
    with respective eigenvalues $i \lambda_k,, -i \lambda_k, 0$.
  \end{rem}
  \begin{prop}
    \label{prop:r_block_diag}
    For any $U,V \in T_pM$, the curvature tensor $R(U,V)$ is block diagonal in the basis $Z_i=1\dots n$.
  \end{prop}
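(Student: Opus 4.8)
The plan is to exploit the fact that here $\theta$ solves the gauge equation for a \emph{self-dual} torsionless connection, i.e. $\nabla^{lc}=\nabla=\nabla^{*}$, so that the gauge equation reduces to $(\nabla^{lc}_X\theta)Y=0$; in other words $\theta$ is $\nabla^{lc}$-parallel. A parallel $(1,1)$-tensor is annihilated by curvature, and for a $(1,1)$-tensor this is exactly the statement that $R(U,V)$ commutes with $\theta$. Once that commutation relation is in hand, the block structure is pure linear algebra at the point $p$.

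Concretely, I would first record the Ricci identity: differentiating $\nabla^{lc}\theta=0$ once more and antisymmetrizing in the two differentiation slots gives $R(U,V)\cdot\theta=0$ for all $U,V\in T_pM$, where $R(U,V)$ acts on tensors as a derivation. Evaluating on a vector field $W$, $(R(U,V)\cdot\theta)W=R(U,V)(\theta W)-\theta(R(U,V)W)$, so this reads $[R(U,V),\theta]=0$ as endomorphisms of $T_pM$. Passing to matrices at $p$, $\theta\leftrightarrow\Theta$ and $R(U,V)\leftrightarrow\mathcal R$, we get $\mathcal R\,\Theta=\Theta\,\mathcal R$, and hence also $\mathcal R\,\Theta^{2}=\Theta^{2}\,\mathcal R$.

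Now $\Theta^{2}$ is a symmetric negative-semidefinite operator which, in the orthonormal basis $Z_1,\dots,Z_n$ constructed above, acts as $0$ on $\ker\theta=\mathrm{span}(Z_{2m+1},\dots,Z_n)$ and as $-\lambda_k^{2}\,\mathrm{Id}$ on each plane $\mathrm{span}(Z_{2k-1},Z_{2k})$. An operator commuting with a diagonalizable symmetric operator preserves each of its eigenspaces, so $\mathcal R$ preserves the orthogonal decomposition $T_pM=\bigoplus_k\mathrm{span}(Z_{2k-1},Z_{2k})\oplus\ker\theta$. Since the $Z_i$-basis is adapted to this decomposition, the matrix $\mathcal R$ of $R(U,V)$ in this basis is block diagonal with the same $2\times 2$ block pattern as $\Theta$, together with the $\ker\theta$-block; this is the assertion. (If several $\lambda_k$ coincide the $\Theta^{2}$-eigenspaces are larger and one gets block-diagonality only for that coarser decomposition; inside such a block $R(U,V)$ is moreover complex-linear for the complex structure $\Theta/\lambda_k$, since it commutes with $\Theta$ there.)

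The only step with genuine content is the first one, namely making precise that a $\nabla^{lc}$-parallel $(1,1)$-tensor forces $[R(U,V),\theta]=0$; this is the standard Ricci identity, so I do not expect a real obstacle beyond keeping track of sign conventions for $R$ and, if one insists on the literal $2\times 2$ blocks rather than the $\Theta^{2}$-eigenblocks, the harmless genericity remark that the $\lambda_k$ are pairwise distinct. Everything after that is the spectral theorem applied to $\Theta^{2}$.
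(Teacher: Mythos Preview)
Your proposal is correct and follows essentially the same approach as the paper: both obtain the commutation $[R(U,V),\theta]=0$ from $\nabla^{lc}\theta=0$ and then deduce block-diagonality by linear algebra at $p$. The only cosmetic difference is that the paper complexifies and invokes simultaneous diagonalizability of $R(U,V)$ and $\Theta$, while you argue via the real symmetric operator $\Theta^{2}$ and its eigenspace decomposition; your route is arguably tidier, and your caveat about repeated $\lambda_k$ is a subtlety the paper simply glosses over.
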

\begin{proof}
  Let $U,V \in T_pM$ be fixed. In the basis $X_1,\dots,X_n$, $R(U,V)$ is represented by an skew symetric matrix, still denoted by 
   $R(U,V)$. Since $\nabla^{\text{lc}}_{X}\circ\theta=\theta\circ\nabla^{\text{lc}}_{X}$, $R(U,V)$ and $\Theta$ commute, and since they are both diagonalizable 
   (as complex matrices), they must have the same eigenspaces.
\end{proof}
\begin{rem}
  The proposition \ref{prop:r_block_diag} also shows that any gauge transformation $\theta^\prime$ satisfying $\nabla^{\text{lc}}_{X}\circ\theta^\prime=\theta^\prime\circ\nabla^{\text{lc}}_{X}$
  commutes with $R(U,V)$, and so is block diagonal in the base $Z_1,\dots,Z_n$. It must thus commute with $\theta$. 
\end{rem}
\begin{prop}
\label{prop:r_factorization}
The curvature tensor $R$ is such that:
\[
\begin{cases}
&R(Z_{2k},Z_{2k-1})Z_{2j}=-\mu_{kj} Z_{2j-1} \\
&R(Z_{2k-1},Z_{2k})Z_{2j}=\mu_{kj} Z_{2j-1} \\
&R(Z_{2k},Z_{2k-1})Z_{2j-1}=\mu_{kj} Z_{2j} \\
&R(Z_{2k-1},Z_{2k})Z_{2j-1}=-\mu_{kj} Z_{2j} \\
& 0 \text{ otherwise.} 
\end{cases}
\]
\end{prop}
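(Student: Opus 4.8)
The plan is to combine the two structural facts already established: first, that in the orthonormal basis $Z_1,\dots,Z_n$ the matrix $\Theta$ of $\theta$ is block diagonal with $2\times 2$ blocks $\begin{pmatrix}0&-\lambda\\ \lambda&0\end{pmatrix}$ (and a zero tail), and second, that by Proposition~\ref{prop:r_block_diag} every curvature operator $R(U,V)$ is block diagonal in that same basis. I would begin by writing out what "block diagonal and skew-symmetric" forces for $R(Z_{2k},Z_{2k-1})$: on each $2$-plane $\mathrm{span}(Z_{2j-1},Z_{2j})$ it acts as a $2\times2$ skew matrix, hence as a scalar multiple of $\begin{pmatrix}0&-1\\ 1&0\end{pmatrix}$; on the kernel block it must be zero (a $1\times1$ skew block is $0$, and more generally any diagonal block lying in the $\lambda=0$ eigenspace of $\Theta$ is killed, since $R(U,V)$ and $\Theta$ share eigenspaces and the curvature respects the splitting). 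Define $\mu_{kj}$ by $R(Z_{2k},Z_{2k-1})Z_{2j}=-\mu_{kj}Z_{2j-1}$; skew-symmetry of $R(Z_{2k},Z_{2k-1})$ then immediately gives $R(Z_{2k},Z_{2k-1})Z_{2j-1}=\mu_{kj}Z_{2j}$, and antisymmetry of $R$ in its first two slots gives the two formulas with $Z_{2k-1},Z_{2k}$ swapped, with the opposite sign. That accounts for all five cases in the statement.

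The only remaining point is to verify that the \emph{same} constant $\mu_{kj}$ governs both the action on $Z_{2j}$ and the action on $Z_{2j-1}$ within a fixed block, i.e. that the block of $R(Z_{2k},Z_{2k-1})$ on $\mathrm{span}(Z_{2j-1},Z_{2j})$ is genuinely a rotation-type block and not a general skew block — but a $2\times2$ skew-symmetric matrix has only one free parameter, so this is automatic once skew-symmetry (from $\nabla^{\mathrm{lc}}g=0$, i.e. $g(R(U,V)A,B)=-g(R(U,V)B,A)$) is invoked. I would therefore spell out the skew-symmetry relation $g\big(R(Z_{2k},Z_{2k-1})Z_{2j},Z_{2j-1}\big)=-g\big(R(Z_{2k},Z_{2k-1})Z_{2j-1},Z_{2j}\big)$ explicitly to pin down the signs and confirm the labelling is consistent.

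The main obstacle — really the only subtlety — is bookkeeping: making sure the sign conventions for $R$ (antisymmetry $R(U,V)=-R(V,U)$ versus the metric-compatibility antisymmetry $g(R(U,V)\cdot,\cdot)=-g(R(U,V)\cdot,\cdot)$) are applied in the right slots so that the four nonzero lines in the display come out with exactly the signs shown, rather than some permutation of them. Everything else is forced by linear algebra on $2\times2$ blocks together with Proposition~\ref{prop:r_block_diag}. I would close by noting that the "$0$ otherwise" clause covers both the off-block entries (which vanish because $R(Z_{2k},Z_{2k-1})$ preserves each block) and the entries involving the kernel directions $Z_{2m+1},\dots,Z_n$ (which vanish because $R(U,V)$ annihilates the $\Theta$-kernel block).
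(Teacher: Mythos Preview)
Your argument handles the \emph{output} side correctly: once you know $R(Z_{2k},Z_{2k-1})$ is block diagonal and skew-symmetric, each $2\times 2$ block is determined by a single scalar $\mu_{kj}$, and the four displayed lines follow with the stated signs. The gap is on the \emph{input} side. The clause ``$0$ otherwise'' is not only about where $R(Z_{2k},Z_{2k-1})$ sends vectors; it also asserts that $R(Z_a,Z_b)=0$ whenever $Z_a,Z_b$ do \emph{not} lie in the same $2$-block, e.g.\ $R(Z_1,Z_3)$ or $R(Z_2,Z_{2m+1})$. Proposition~\ref{prop:r_block_diag} alone does not give this: it tells you $R(Z_1,Z_3)$ is block diagonal, not that it vanishes.

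The missing ingredient is the pair symmetry of the Riemann tensor,
\[
g\big(R(U,V)X,Y\big)=g\big(R(X,Y)U,V\big),
\]
which is exactly what the paper invokes. Combining it with Proposition~\ref{prop:r_block_diag} one gets: $g\big(R(Z_a,Z_b)Z_c,Z_d\big)=g\big(R(Z_c,Z_d)Z_a,Z_b\big)$, and the right-hand side vanishes unless $Z_a,Z_b$ are in the same block (because $R(Z_c,Z_d)$ is block diagonal). Hence $R(Z_a,Z_b)=0$ unless $a,b$ form a pair $\{2k-1,2k\}$. Once you add this step, your block-by-block analysis finishes the proof. A smaller point: your claim that $R(U,V)$ ``annihilates the $\Theta$-kernel block'' is not justified by commutation alone; it only follows because the restriction of a skew-symmetric operator to a one-dimensional invariant subspace is zero, so you should say that explicitly (and note it needs the kernel to be a single line, which is the case of interest here).
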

\begin{proof}
Let us first recall that for any $X,Y,U,V$:
\[
\met{R(U,V)X}{Y}=\met{R(X,Y)U}{V}
\]
Then, using the expression of $R$ in the basis $Z_i, i=1\dots n$, it comes that only the terms:
\[
R(Z_{2k},Z_{2k-1}) = - R(Z_{2k-1},Z_{2k})
\]
can be non-zero. The claim follows by using the block diagonal expression of $R$.
\end{proof}
\begin{rem}
  \label{rem:ricci_gauge}
 A direct computation shows that the Ricci tensor is diagonal in the basis $Z_i, i=1\dots n$ and:
 \[
 \ricc(Z_{2k},Z_{2k})=\ricc(Z_{2k-1},Z_{2k-1}) = \mu_{kk}.
 \]
\end{rem}

\subsection{\textbf{ Gauge equation solution and K-cosymplectic Structures}
}
\begin{defn}
\cite{bazzoni2015k} A $2n+1$-dimensional manifold M is \textbf{K-cosymplectic} if it is endowed with a cosymplectic such that the Reeb vector field is Killing respect to some Riemannian metric on M.
\end{defn}
\begin{rem}
  By using Blair definition of cosymplectic manifold, Giovanni Bazzoni and Oliver Goertsches in \cite{bazzoni2015k} proves that the previous definition is equivalent to cosymplectic structure $(\theta,\xi,\eta,g)$ such that the Reeb vector field  $\xi$ is Killing.
\end{rem}
 \begin{prop}
 In a $2n+1$-dimensional oriented Riemannian manifold $(M,g)$,  if the gauge equation of selfdual torsionless(Levi-Civita connections) admits a skew-symmetric solution of rank $2n$. Then $M$ admits a K-cosymplectic structure.
 \end{prop}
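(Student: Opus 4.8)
The plan is to extract from $\theta$ all the data of a cosymplectic structure and to show that its Reeb field is parallel, hence Killing. Since $\lc$ is self-dual, the gauge equation for $\nabla=\lc$ reads simply $\lc\theta=0$, so $\theta$ is a parallel, $g$-skew $(1,1)$-tensor of constant rank $2n$. First I would note that $\ker\theta$ is then a smooth line subbundle of $TM$, equal to $\ker p_\theta$ since $g$ is nondegenerate, and that by the same Hodge-star argument used in the proof of Theorem \ref{thm:cosymplectic_dual} the orientation of $M$ together with the nowhere-zero $2n$-form $p_\theta^{\,n}$ orients $\ker\theta$; hence there is a global unit section $\xi$, so that $g(\xi,\xi)=1$ and $\theta\xi=0$. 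Setting $H:=\xi^{\perp}$, $g$-skewness of $\theta$ shows $\theta$ preserves $H$, and $\theta|_H$ is an isomorphism of $H$ because $\ker\theta\cap H=0$.

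Next I would show $\lc\xi=0$. For any vector field $X$, applying $\lc_X$ to $\theta\xi=0$ and using $\lc\theta=0$ gives $\theta(\lc_X\xi)=0$, so $\lc_X\xi\in\ker\theta$; and $g(\lc_X\xi,\xi)=\tfrac12\,X\!\big(g(\xi,\xi)\big)=0$ forces $\lc_X\xi=0$. Consequently $\eta:=\xi^{\flat}$ is $\lc$-parallel, and so is $\omega:=p_\theta$, being assembled from the parallel tensors $g$ and $\theta$. The exterior-derivative formula for a torsionless connection, already used earlier, then gives $d\eta=0$ and $d\omega=0$. Since $\theta|_H$ is an isomorphism, $\omega|_H$ is a symplectic form on $H$, so $\omega^{\,n}|_H\neq0$; pairing with $\eta$, which is nonzero on $\xi$, shows $\eta\wedge\omega^{\,n}$ is a volume form on $M$. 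Thus $(\eta,\omega)$ is a cosymplectic structure in the sense of Libermann, and the identities $i_\xi\omega=g(\theta\xi,\cdot)=0$ and $\eta(\xi)=1$ identify $\xi$ as its Reeb vector field.

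To conclude I would invoke that a parallel vector field is Killing: $(\mathcal{L}_\xi g)(X,Y)=g(\lc_X\xi,Y)+g(X,\lc_Y\xi)=0$. Hence the Reeb field of the cosymplectic manifold $(M,\eta,\omega)$ is $g$-Killing, which by the definition of Bazzoni and Goertsches recalled above is exactly the assertion that $M$ is $K$-cosymplectic. If one prefers the associated almost contact metric quadruple, replace $\theta$ on $H$ by its polar part $\theta':=\theta\,(-\theta^{2})^{-1/2}$ (extended by $\theta'\xi=0$): since $-\theta^{2}$ is parallel and positive-definite on $H$, functional calculus makes $\theta'$ a parallel, $g$-orthogonal almost complex structure on $H$ with $(\theta')^{2}=-\text{Id}+\eta\otimes\xi$ and fundamental form $\Omega(X,Y)=g(\theta'X,Y)$ again $\lc$-parallel, hence closed; then $(\theta',\xi,\eta,g)$ is a cosymplectic almost contact metric structure with $\xi$ Killing. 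The main point to get right — and the place where constancy of $\text{rank}\,\theta$ is indispensable — is precisely that $\ker\theta$ is a genuine smooth subbundle and that this polar normalization is smooth and parallel; but the delicate polar step is entirely avoidable if one argues directly with $\omega=p_\theta$ as above.
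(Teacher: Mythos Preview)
Your proof is correct and in fact cleaner than the paper's. Both arguments begin the same way---take a unit section $\xi$ of the oriented line bundle $\ker\theta=\ker p_\theta$ and set $\eta=\xi^\flat$, $\omega=p_\theta$---but diverge on how to obtain $d\eta=0$ and the Killing property. You observe directly that $\lc_X(\theta\xi)=0$ together with $\lc\theta=0$ forces $\lc_X\xi\in\ker\theta$, and then $g(\lc_X\xi,\xi)=\tfrac12 X\!\left(g(\xi,\xi)\right)=0$ kills the remaining component; thus $\xi$ is $\lc$-parallel, and both $d\eta=0$ and $\mathcal{L}_\xi g=0$ are immediate. The paper instead argues in the language of foliations: it shows $\ker p_\theta$ is a minimal (indeed totally geodesic) $\lc$-parallel foliation to get $d\eta_\theta(\hat\xi_\theta,\cdot)=0$, uses integrability of $\ker\eta_\theta$ for the remaining components of $d\eta_\theta$, and then appeals to a result of Tondeur (geodesic plus Riemannian flow implies Killing) for the conclusion. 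Your route is more elementary and entirely self-contained, avoiding the external citation; the paper's route makes the foliation-theoretic content of the statement more visible but is considerably more circuitous given that $\lc\xi=0$ is available in two lines. Your optional polar-normalization remark is also correct and goes slightly beyond what the paper proves here.
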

  
 \begin{proof}
  Let $\theta$ be a skew-symmetric solution of the gauge equation. By assumption the rank of $\theta$ is 2n, so 
2-form $p_{\theta}$ has maximal rank, i.e. such that $p_{\theta}^{n}$  vanishes
nowhere.The gauge equation ($\nabla^{lc}\theta=0)$ implies that $\nabla^{lc}p_{\theta}=0(dp_{\theta}=0).$ The distribution $ker p_{\theta}$ is $\nabla^{lc}$-paralell, then associated to $p_{\theta}$ is its 1-dimensional kernel distribution(foliation) $ker p_{\theta}$. By using the orientation on $M$  together with $p_{\theta}^{n}$, we orient $ker p_{\theta}.$
  Let $\hat{\xi_{\theta}}$ be a unit norm section in $\ker p_\theta$. Denote by $\text{H}$ the mean curvature vector of the foliation $ker p_{\theta}$
 
  \[ \text{H}=(\nabla^{lc}_{ \hat{\xi_{\theta}}} \hat{\xi_{\theta}})\vert_{\ker p_\theta^{\perp}} 
  \]
 and $\eta_{\theta}$ be the volume form of $ker p_{\theta}$: \[\eta_{\theta}(X)=g(X,\hat{\xi_{\theta}})\quad \forall X\in \mathcal{X}(M)   \]
 We have by simple calculation 
 \[ 
 d\eta_{\theta}(\hat{\xi_{\theta}},X)=\hat{\xi}_{\theta}.<\hat{\xi_{\theta}},X>-X.|\hat{\xi_{\theta}}|^{2}-<\hat{\xi_{\theta}},[\hat{\xi_{\theta}},X]>
 \]
 \[  d\eta_{\theta}(\hat{\xi_{\theta}},X)=<{\nabla^{\text{lc}}}_{\hat{\xi_{\theta}}}\hat{\xi_{\theta}},X>-\frac{1}{2}X.|\hat{\xi_{\theta}}|^{2}=<\text{H},X>   \]
 The 1-dimensional foliation $\ker p_\theta$ is minimal foliation, then 
 
 \begin{equation}
     d\eta_{\theta}(\hat{\xi_{\theta}},X)=0\quad\forall X\in\mathcal{X}(M).
 \end{equation}
 The distribution $\text{ker}\eta_{\theta}$ is $\nabla^{lc}$-paralell, then $\text{ker}\eta_{\theta}$ is codimension one co-orientable foliation, by using the integrability condition: $$\eta_{\theta}([X,Y])=0\quad  \forall X,Y\in \Gamma(ker\eta_{\theta})$$ we deduce that 
 \begin{equation}
      d\eta_{\theta}(X,Y)=0 \quad  \forall X,Y\in \Gamma(ker\eta_{\theta})
 \end{equation}
 From (4.5) and (4.6)  we deduce that 
 \[ d\eta_{\theta}=0   \]
 Then $(p_{\theta},\eta_{\theta})$ is cosymplectic structure on M and $\hat{\xi_{\theta}}$ his Reeb vector field.
 
(i)  $${\nabla^{\text{lc}}}_{\hat{\xi_{\theta}}}\hat{\xi_{\theta}}=0.$$ The flows lines of $\hat{\xi_{\theta}}$ are geodesible flow.

(ii) By calculations 
\[ (L_{\hat{\xi_{\theta}}}g)(X,Y)=g(\nabla^{lc}_{X}\hat{\xi_{\theta}},Y)+g(X,\nabla^{lc}_{Y}\hat{\xi_{\theta}})=0\quad \forall X,Y\in\text{ker}\eta_{\theta}. \]
  Then $\hat{\xi_{\theta}}$ is Riemannian flow.\\ 
  From \cite{tondeur1997geometry}(proposition 10.10), (i) and (ii) implies that Reeb vector field $\hat{\xi_{\theta}}$ is Killing vector field ie($L_{\hat{\xi}_{\theta}}g=0).$
  
 \end{proof}
\begin{cor} Let M be a pseudo-Kahler manifold in sense of Lichnerowicz, the manifold
$W=M\times \mathbb{S}^{1}$ admits K-cosymplectic structures.
\end{cor}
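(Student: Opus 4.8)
The plan is to reduce the corollary to the preceding proposition about K-cosymplectic structures via a product construction. First I would recall that if $M$ is pseudo-Kähler in the sense of Lichnerowicz, then by the proposition on pseudo-Kähler structures there exists a metric $g$ on $M$ and a skew-symmetric solution $\theta$ of the gauge equation of the self-dual (Levi-Civita) connection with $\operatorname{rank}\theta = 2n$, equivalently $\nabla^{lc}\theta = 0$ and $p_\theta^n \neq 0$ everywhere. The goal is to transport this data to $W = M \times \mathbb{S}^1$ and then invoke the proposition immediately above, which says that on a $(2n+1)$-dimensional oriented Riemannian manifold the existence of a skew-symmetric rank-$2n$ solution of the Levi-Civita gauge equation forces a K-cosymplectic structure.

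The key steps, in order: (1) equip $W = M \times \mathbb{S}^1$ with the product metric $h = g + d t^2$, where $t$ is the coordinate on $\mathbb{S}^1$; since $M$ is orientable (being pseudo-Kähler it carries the volume form $p_\theta^n$) and $\mathbb{S}^1$ is orientable, $W$ is an oriented $(2n+1)$-dimensional Riemannian manifold. (2) Extend $\theta$ to a $(1,1)$-tensor $\tilde\theta$ on $W$ by declaring $\tilde\theta|_{TM} = \theta$ and $\tilde\theta(\partial/\partial t) = 0$; then $\tilde\theta$ is skew-symmetric with respect to $h$ and has rank exactly $2n$, with $\ker\tilde\theta = \mathbb{R}\,\partial/\partial t$. (3) Check that $\tilde\theta$ solves the gauge equation of the Levi-Civita connection of $h$, i.e. $\nabla^{h}\tilde\theta = 0$: because $h$ is a product metric, $\nabla^h$ splits as $\nabla^{lc,g} \oplus \nabla^{\mathbb{S}^1}$, the $\mathbb{S}^1$-factor being flat, so $(\nabla^h_X \tilde\theta)Y$ reduces on the $M$-directions to $(\nabla^{lc}_X\theta)Y = 0$ and vanishes trivially on the $\partial/\partial t$-direction. (4) Apply the proposition on K-cosymplectic structures to $(W, h, \tilde\theta)$ to conclude that $W$ admits a K-cosymplectic structure, with Reeb vector field the unit section of $\ker p_{\tilde\theta} = \mathbb{R}\,\partial/\partial t$.

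The only genuinely delicate point is step (3): one must verify carefully that the Levi-Civita connection of the product metric restricts correctly so that the mixed Christoffel symbols vanish and the extension $\tilde\theta$ remains parallel; this is where a product-metric computation is needed, but it is standard. Everything else is bookkeeping — orientability of $W$, the rank count for $\tilde\theta$, and the identification $\ker p_{\tilde\theta} = \mathbb{R}\,\partial/\partial t$ — so I expect no real obstacle beyond making the product-connection argument precise. In fact, since $\partial/\partial t$ is $\nabla^h$-parallel and generates a Killing field on $W$, the K-condition (Reeb vector field Killing) is automatic here, which is a mild shortcut over the general proposition.
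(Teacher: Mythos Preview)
Your proof is correct and takes a slightly different, more modular route than the paper. The paper constructs the cosymplectic pair directly: it pulls back the angular form $d\alpha$ from $\mathbb{S}^1$ to obtain the closed $1$-form $\eta_\alpha = \pi^*d\alpha$, pulls back $\Omega_\theta$ from $M$ to obtain the closed $2$-form $\bar\Omega_\theta = p^*\Omega_\theta$, checks $\bar\Omega_\theta^{\,n}\wedge\eta_\alpha\neq 0$, and then verifies by hand that the Reeb field $\partial/\partial\alpha$ is Killing for the product metric $h=g+d\alpha^2$. You instead lift the $(1,1)$-tensor $\theta$ to a parallel skew-symmetric $\tilde\theta$ of rank $2n$ on $(W,h)$ and invoke the preceding proposition wholesale. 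The two arguments produce the same data (your $p_{\tilde\theta}$ is their $\bar\Omega_\theta$, your $\eta_{\tilde\theta}$ is their $\eta_\alpha$), but your route makes the statement a genuine corollary of the proposition just proved and stays inside the gauge-equation framework, while the paper's route is self-contained and bypasses that proposition entirely. Your remark that $\partial/\partial t$ is already $\nabla^h$-parallel and Killing is exactly the shortcut the paper uses in its direct verification of the K-condition.
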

\begin{proof}

  $W$ is a fiber bundle over $\mathbb{S}^{1}$, let   $\pi:W\rightarrow\mathbb{S}^{1}$ denote the natural projection on $\mathbb{S}^{1}$. Let $d\alpha$ be the angular form on $\mathbb{S}^{1}$ and $\frac{d}{d\alpha}$ its dual vector field on. It satisties  $d{\alpha}(\frac{d}{d\alpha})=1$ , and so induces  naturally on $W$ a non-vanishing closed 1-form  $\eta_{\alpha}=\pi^{\star}(d\alpha)$ and a non-vanishing vector field $\xi_{\alpha}$ such that: 
  \[   
  \eta_{\alpha}(\xi_{\alpha})= d{\alpha}(\frac{d}{d\alpha})=1.   
  \]
  By assumption  $M$ admit a pseudo-Kahler structure $(g,\Omega_{\theta})$, then on M we have : \[ \nabla\Omega_{\theta}=0\quad\text{and} \quad \Omega_{\theta}^{n}\ne 0 . \] 
  Let $p:W\rightarrow M$ denote the natural projection. Let denote by $\Bar{\Omega}_{\theta}$ the closed 2-form defined by: $$\Bar{\Omega}_{\theta}=p^{\star}\Omega_{\theta}.$$ We have \[  \Bar{\Omega}_{\theta}^{n}\wedge\eta_{\alpha}\ne0\quad \text{on}\quad W. \]
  
 $ker\Bar{\Omega}_{{\theta}_{p}}$ is one dimensional for all $p\in W$ and $\Bar{\Omega}_{\theta}$ determines a line bundle by: \[l_{\Bar{\Omega}_{\theta}}=\cup_{p\in W}(p,ker\Bar{\Omega}_{{\theta}_{p}})\]
   $ker(\eta_{\alpha})$ is a hyperplane distribution transverse to
   $l_{\Bar{\Omega}_{\theta}}$ and hence $\Bar{\Omega}_{\theta}$ restricts to a nondegenerate form on $ker(\eta_{\alpha})$.
   Let $\xi_{\alpha}$ to be the unique section of $l_{\Bar{\Omega}_{\theta}}$
   satisfying $\eta_{\alpha}(\xi_{\alpha}) = 1$.  We see that $\Bar{\Omega}_{\theta}^{n}\wedge\eta_{\alpha}\ne0$, so  the tangent bundle T$M$ splits 
   as the direct sum of a line bundle with a preferred nowhere vanishing section, and a symplectic vector bundle:
   \[  TM=\mathbb{R}\xi_{\theta}\oplus (ker\eta_{\alpha},\Bar{\Omega}_{\theta})  
   .\]
 Let  $h=g+(d\alpha)^{2}$ be a metric of W, $\xi_{\theta}$  is Killing for the metric $h$, then $(\Bar{\Omega}_{\theta},\eta_{\alpha})$ is K-cosymplectic structure on $W=M\times \mathbb{S}^{1}$.

\end{proof}

\subsubsection{\textbf{coKähler structure in dimension three and gauge equation solution}}

\begin{defn}
 An almost contact metric structure $(\theta,\xi,\eta,g)$  on an odd-dimensional smooth manifold $M$
is coKähler if it is cosymplectic and normal, that is $N_{\theta}+d\eta\otimes\xi=0$, where $N_{\theta}$ is the Nijenhuis torsion of $\theta$,  defined as:
$$ N_{\theta}(X,Y)=\theta^{2}[X,Y]-\theta([\theta X,Y]+[X,\theta Y])+[\theta X,\theta Y].$$
\end{defn}

As it is known, an almost contact metric structure is coKähler if and only if
both $\nabla^{lc}\eta=0$ and $\nabla^{lc}\Omega=0$, where $\nabla^{lc}$ is the covariant differentiation with respect $g$ and $\Omega$ the fundamental 2-form of the almost contact metric structure. From \cite{blair2010riemannian}(Theorem 6.7) we have the following assertion:
\begin{prop}
An cosymplectic  manifold $(M,\theta,\xi,\eta,g)$ is coKähler if and only if $\nabla^{lc}\theta=0.$
\end{prop}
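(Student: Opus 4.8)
The plan is to prove the equivalence $(M,\theta,\xi,\eta,g)$ coK\"ahler $\iff \nabla^{lc}\theta = 0$ by exploiting the characterization stated just above the proposition: a cosymplectic almost contact metric structure is coK\"ahler precisely when $\nabla^{lc}\eta = 0$ and $\nabla^{lc}\Omega = 0$ simultaneously. Since by hypothesis the structure is already cosymplectic, we have $d\eta = 0$ and $d\Omega = 0$, and the fundamental $2$-form $\Omega$ agrees with $p_\theta$ via $\Omega(X,Y) = g(\theta X, Y)$. The strategy is therefore: first show $\nabla^{lc}\theta = 0$ forces both $\nabla^{lc}\Omega = 0$ and $\nabla^{lc}\eta = 0$ (the easy direction), and then conversely show that for a cosymplectic structure the two conditions $\nabla^{lc}\eta = 0$ and $\nabla^{lc}\Omega = 0$ together force $\nabla^{lc}\theta = 0$.

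For the forward direction, assume $\nabla^{lc}\theta = 0$. Then for any vector fields $X,Y,Z$ one computes
\[
(\nabla^{lc}_Z \Omega)(X,Y) = Z\bigl(g(\theta X, Y)\bigr) - g(\theta \nabla^{lc}_Z X, Y) - g(\theta X, \nabla^{lc}_Z Y) = g\bigl((\nabla^{lc}_Z \theta)X, Y\bigr) = 0,
\]
using that $\nabla^{lc}$ is metric. For $\eta$: from $\eta(X) = g(X,\xi)$ one has $\xi = -\theta^2\xi + \eta(\xi)\xi$, but more directly, since the structure is cosymplectic one knows $\nabla^{lc}_X\xi$ lies in the kernel structure; the cleanest route is to recall $\phi\xi = 0$ and $\eta\circ\phi = 0$ from Proposition~\ref{prop:contact_basic_prop}, differentiate $\theta\xi = 0$ to get $\theta\nabla^{lc}_Z\xi = 0$ since $\nabla^{lc}\theta = 0$, so $\nabla^{lc}_Z\xi \in \ker\theta = \mathbb{R}\xi$; then differentiating $g(\xi,\xi) = 1$ gives $g(\nabla^{lc}_Z\xi, \xi) = 0$, hence $\nabla^{lc}_Z\xi = 0$, and finally $(\nabla^{lc}_Z\eta)(X) = Z(g(X,\xi)) - g(\nabla^{lc}_Z X, \xi) = g(X, \nabla^{lc}_Z\xi) = 0$. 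By the stated characterization, the structure is coK\"ahler.

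For the converse, assume the structure is coK\"ahler, so $\nabla^{lc}\eta = 0$ and $\nabla^{lc}\Omega = 0$; we must recover $\nabla^{lc}\theta = 0$. From $\nabla^{lc}\eta = 0$ and $\eta(X) = g(X,\xi)$ we get $\nabla^{lc}_Z\xi = 0$ for all $Z$. The identity $(\nabla^{lc}_Z\Omega)(X,Y) = g((\nabla^{lc}_Z\theta)X, Y)$ derived above shows $\nabla^{lc}\Omega = 0 \implies g((\nabla^{lc}_Z\theta)X,Y) = 0$ for all $X,Y$, hence $\nabla^{lc}_Z\theta = 0$. Thus the equivalence holds; I would close by noting the redundancy, namely that within the cosymplectic hypothesis the single condition $\nabla^{lc}\Omega = 0$ already implies $\nabla^{lc}\theta = 0$, and conversely, with $\nabla^{lc}\eta = 0$ following automatically. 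The main (minor) obstacle is bookkeeping: making sure that $\nabla^{lc}\Omega = 0$ genuinely gives $\nabla^{lc}_Z\theta = 0$ as an endomorphism and not merely that its associated bilinear form vanishes on pairs — but this follows at once since $g$ is nondegenerate, so $g((\nabla^{lc}_Z\theta)X, Y) = 0$ for all $Y$ forces $(\nabla^{lc}_Z\theta)X = 0$.
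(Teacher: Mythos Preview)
Your proof is correct. The key computation $(\nabla^{lc}_Z\Omega)(X,Y)=g\bigl((\nabla^{lc}_Z\theta)X,Y\bigr)$ is valid because $\nabla^{lc}$ is metric, and nondegeneracy of $g$ then makes the equivalence between $\nabla^{lc}\Omega=0$ and $\nabla^{lc}\theta=0$ immediate; your derivation of $\nabla^{lc}\xi=0$ (and hence $\nabla^{lc}\eta=0$) from $\nabla^{lc}\theta=0$ via $\theta\xi=0$ and $g(\xi,\xi)=1$ is also clean.

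Note, however, that the paper does not supply its own proof of this proposition: it simply attributes the statement to Blair \cite{blair2010riemannian}, Theorem~6.7, and records it without argument. So there is nothing to compare your argument against here; you have filled in a proof that the paper omits by citation. Your observation that, under the standing cosymplectic hypothesis, the single condition $\nabla^{lc}\Omega=0$ already forces $\nabla^{lc}\theta=0$ (with $\nabla^{lc}\eta=0$ following automatically) is a nice clarification of which hypotheses are actually doing the work.
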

  From \cite{li2008topology}, coKähler manifolds are odd-dimensional analog of Kähler manifolds:
  \begin{thm}\cite{li2008topology}
  Any coKähler manifold  is a Kähler mapping torus.
  \end{thm}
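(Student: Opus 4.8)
The plan is to establish the equivalence between being a coKähler manifold and being a Kähler mapping torus by combining the structural decomposition of coKähler manifolds with the mapping torus description already available for cosymplectic manifolds. First I would recall that a coKähler manifold $(M,\theta,\xi,\eta,g)$ is in particular a cosymplectic manifold in the sense of Li, so by the main theorem of \cite{li2008topology} it is a symplectic mapping torus: $M\cong M_\varphi^{2n} = (N\times[0,1])/(x,0)\sim(\varphi x,1)$ for some symplectic manifold $(N,\omega_N)$ and symplectic diffeomorphism $\varphi$. The point is to upgrade this to a \emph{Kähler} mapping torus by showing that the transverse data is in fact Kähler and that $\varphi$ can be taken to be a biholomorphism.

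The key steps, in order, are as follows. (1) Use the characterization recalled just before the statement: coKähler is equivalent to $\nabla^{lc}\eta = 0$ and $\nabla^{lc}\Omega = 0$, hence (by the preceding proposition) to $\nabla^{lc}\theta = 0$ together with $\nabla^{lc}\eta=0$. In particular $\eta$ is a parallel, hence harmonic, nowhere-zero closed $1$-form, so by Tischler's theorem $M$ fibers over $\mathbb{S}^1$ and $\xi$ (the Reeb field, which here is parallel) is the generator of the $\mathbb{S}^1$-direction; the fiber $N = \eta^{-1}(\mathrm{pt})$ is a leaf of the foliation $\ker\eta$. (2) Restrict the structure tensors to a fiber $N$: since $\ker\eta$ is a $\nabla^{lc}$-parallel distribution and $\theta,\Omega,g$ are all $\nabla^{lc}$-parallel and preserve $\ker\eta$, the restrictions $(\,\theta|_N,\,g|_N,\,\Omega|_N\,)$ give an almost Hermitian structure on $N$ whose Kähler form $\Omega|_N$ and complex structure $\theta|_N$ are parallel for the induced Levi-Civita connection on $N$ (here one uses that $N$ is totally geodesic, which follows from $\nabla^{lc}\xi=0$ so that $\nabla^{lc}$ restricts to the leaves). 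Hence $N$ is a Kähler manifold. (3) Identify the monodromy: flowing along $\xi$ for time $1$ gives the diffeomorphism $\varphi\colon N\to N$ realizing $M$ as a mapping torus; since the flow of $\xi$ preserves $g$, $\eta$, $\theta$ (because all these are $\xi$-parallel, hence $L_\xi$-invariant — $L_\xi = \nabla^{lc}_\xi + (\text{terms in }\nabla^{lc}\xi=0)$), $\varphi$ preserves $g|_N$ and $\theta|_N$, so $\varphi$ is a Kähler isometry. Therefore $M = N_\varphi$ is a Kähler mapping torus.

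The main obstacle I expect is step (2)–(3): making precise that the leaf $N$ inherits a \emph{global} Kähler structure and that the identification $M\cong N_\varphi$ is compatible with it, rather than merely producing a transverse Kähler structure up to the $\mathbb{S}^1$-action. This requires knowing that $\xi$ is complete (automatic if $M$ is compact, which is the setting of \cite{li2008topology}) and that the time-$1$ flow is well-defined as a self-map of a chosen leaf; one must also check that $\Omega|_N$ is the pullback of a fixed symplectic form under the mapping-torus projection, i.e.\ that $\varphi^*(\Omega|_N) = \Omega|_N$, which is exactly the $L_\xi$-invariance noted above. Once completeness and totally-geodesic-leaf properties are in hand, the rest is a routine transcription of the cosymplectic mapping-torus argument of \cite{li2008topology} with ``symplectic'' replaced by ``Kähler'' throughout, using that all the extra tensors are Levi-Civita–parallel.
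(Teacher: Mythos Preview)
The paper does not prove this theorem at all: it is stated purely as a citation of Li's result \cite{li2008topology}, with no accompanying argument, and is used only as background for the subsequent discussion of dimension three. So there is no ``paper's own proof'' to compare your proposal against.

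That said, your outline is essentially the argument Li gives in \cite{li2008topology}. The parallelism $\nabla^{lc}\xi=0$ makes the leaves of $\ker\eta$ totally geodesic, so the induced Levi--Civita connection on a leaf $N$ is the restriction of $\nabla^{lc}$, and then $\nabla^{lc}\theta=0$, $\nabla^{lc}\Omega=0$ restrict to show $(N,\theta|_N,g|_N)$ is K\"ahler; the flow of the parallel vector field $\xi$ is by isometries preserving $\theta$, so the time-one return map is a K\"ahler isometry and $M$ is the corresponding mapping torus. Your identification of the one genuine issue (completeness of $\xi$, hence compactness of $M$) is also the hypothesis Li works under. One small point: in step (3) you should not rely on the formula $L_\xi = \nabla^{lc}_\xi + (\text{terms in }\nabla^{lc}\xi)$ for tensors of higher type, but rather argue directly that a parallel tensor is invariant under the flow of a parallel vector field, which is immediate.
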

 coKähler manifolds coincide with cosymplectic
manifolds in Blair’s sense. 
\begin{thm}
Let $\text{M}$ be a 3-dimensional  manifold, the following assertions are equivalent:
   \begin{enumerate}
   \item  $\text{M}$ admits a coKähler structure(Kähler mapping torus),
    \item It exists a metric on $M$ such that gauge equation of the Levi-Civita connection admits a non-zero skew-symmetric solution .
   \end{enumerate}
\end{thm}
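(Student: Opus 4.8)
The plan is to prove the two implications separately, using Theorem \ref{thm:gauge_dual_torsionless} (and its proof strategy) as the main engine, together with the dimension-three coincidence that every almost cosymplectic structure with $\nabla^{lc}$-parallel fundamental form is automatically coKähler.

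For the direction (1)$\Longrightarrow$(2): Suppose $M$ admits a coKähler structure $(\theta,\xi,\eta,g)$. By the characterization recalled just before the theorem (and Proposition stating $\nabla^{lc}\theta = 0$ for coKähler manifolds), we have $\nabla^{lc}\theta = 0$. Since $\nabla^{lc}$ is self-dual, this says exactly that $\theta$ is a solution of the gauge equation of the Levi-Civita connection (see the opening of Subsection on selfdual connections: when $\nabla = \nabla^*$ the gauge equation reads $(\nabla_X\theta)Y = 0$). By Remark \ref{rem:adjoint_petersen} the skew-symmetric part of $\theta$ is again a solution; and for an almost contact metric structure $\theta$ is already skew-symmetric with respect to $g$ by equation \ref{eq:skew_symetry}, so $\theta$ itself is a non-zero skew-symmetric solution. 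This gives (2) with the metric $g$ coming from the coKähler structure.

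For the direction (2)$\Longrightarrow$(1): Let $g$ be a metric on the $3$-manifold $M$ such that the gauge equation of $\nabla^{lc}$ admits a non-zero skew-symmetric solution $\theta$, i.e. $\nabla^{lc}\theta = 0$. The key point is that in dimension $3$ a non-zero skew-symmetric endomorphism automatically has rank $2$: by the normal-form discussion in the curvature subsection, $\theta$ is block-diagonal with a single $2\times 2$ block $\begin{pmatrix} 0 & -\lambda \\ \lambda & 0 \end{pmatrix}$ and one zero row/column, and $\theta \neq 0$ forces $\lambda \neq 0$ at every point (here one must check $\lambda$ cannot vanish at isolated points; but $\nabla^{lc}\theta=0$ makes $\theta^2$ parallel, hence $\lambda^2 = -\tfrac12\operatorname{tr}\theta^2$ is a nonzero constant, so the rank is constantly $2$). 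Thus $\operatorname{rank}\theta = 2n$ with $n=1$. Now invoke the Proposition just above (``if the gauge equation of selfdual torsionless connections admits a skew-symmetric solution of rank $2n$, then $M$ admits a K-cosymplectic structure''), whose proof constructs $(p_\theta, \eta_\theta)$ as a cosymplectic structure with Reeb field $\hat\xi_\theta$. This already yields an almost contact metric structure $(\theta', \hat\xi_\theta, \eta_\theta, g)$ in Blair's sense with $\nabla^{lc}\Omega = \nabla^{lc}p_\theta = 0$ (from $\nabla^{lc}\theta=0$ and Proposition \ref{prop:theta_parallel}) and $\nabla^{lc}\eta_\theta = 0$ (since $\eta_\theta$ is obtained from the parallel volume form of the parallel line field $\ker p_\theta$). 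By the characterization ``an almost contact metric structure is coKähler iff $\nabla^{lc}\eta = 0$ and $\nabla^{lc}\Omega = 0$'', the structure is coKähler, and by Li's theorem it is a Kähler mapping torus.

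The main obstacle is the rank/constancy argument in (2)$\Longrightarrow$(1): one must ensure the non-zero solution $\theta$ has rank exactly $2$ everywhere, not merely at a generic point, and must confirm that the endomorphism $\theta'$ arising in the K-cosymplectic proposition can be taken so that $\Omega = p_\theta$ and $\eta_\theta$ are simultaneously $\nabla^{lc}$-parallel — i.e. that the polarization producing the almost contact metric structure is compatible with the parallelism already present. The constancy follows cleanly from $\nabla^{lc}\theta = 0$ (parallel tensors have constant algebraic invariants), and the compatibility is exactly the content of Proposition \ref{prop:theta_parallel} applied to the self-dual case together with $\nabla^{lc}$-parallelism of the orthogonal splitting $TM = \ker p_\theta \oplus \ker\eta_\theta$; once these are in place the dimension-$3$ specificity is only used to guarantee rank $2$ from non-triviality.
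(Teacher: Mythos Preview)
Your argument for (1)$\Longrightarrow$(2) matches the paper's exactly. For (2)$\Longrightarrow$(1) both you and the paper invoke the K-cosymplectic proposition, but the final step differs: the paper simply cites \cite{bazzoni2015k}[Proposition~2.8] to pass from ``$3$-dimensional K-cosymplectic'' to ``coK\"ahler'', whereas you attempt a direct verification of the coK\"ahler conditions $\nabla^{lc}\Omega=0$ and $\nabla^{lc}\eta=0$.

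Your route is more self-contained and in one respect stronger: the paper's proof asserts ``by assumption the rank of $\theta$ is $2$'' even though the statement only assumes $\theta\neq0$; you correctly fill this gap via the parallel-invariant argument ($\lambda^2=-\tfrac12\operatorname{tr}\theta^2$ is constant). Your direct coK\"ahler verification works, but one detail should be made explicit. The tensor $\theta$ itself does not satisfy $\theta^2=-\mathrm{Id}+\eta_\theta\otimes\hat\xi_\theta$; on $\ker\eta_\theta$ one has $\theta^2=-\lambda^2\,\mathrm{Id}$. The remedy is to set $\phi=\theta/\lambda$: since $\lambda$ is a nonzero constant, $\phi$ is again $\nabla^{lc}$-parallel, and now $(\phi,\hat\xi_\theta,\eta_\theta,g)$ is a genuine almost contact metric structure with $\nabla^{lc}\phi=0$, hence coK\"ahler by Blair's Theorem~6.7. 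With that normalization made explicit, your polarization/compatibility worry disappears---no external polarization is needed, because $\phi$ is already the compatible endomorphism for the original metric $g$. The paper's citation of \cite{bazzoni2015k} sidesteps this computation but at the cost of importing an external result; your approach is arguably cleaner once the normalization is spelled out.
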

\begin{proof}

The necessary part (1) implies (2):
Assume that M admit a coKähler structure, then there exist a almost contact metric  structure $(\theta,\xi,\eta,g)$  on $\text{M}$  where $\eta$ is a 1-form, $\theta$  is an endomorphism of T M, $\xi$ is  a non-vanishing vector field
such that such that \[  \eta(\xi)=1  \quad\text{and}\quad \theta^{2}=-I+\eta\otimes\xi.\] The compatible Riemannian metric $g$  satifies :\[ g(\theta X,\theta Y)=g(X,Y)-\eta(X)\eta(Y) \quad\text{and}\quad  g(\theta X,Y)=-g(X,\theta Y),\] for any two vector fields $X, Y\in\mathcal{X}(\text{M})$. From \cite{blair2010riemannian}(Theorem 6.7) the Levi-Civita connection $\nabla^{lc}$ of the compabible metric $g$ satisfies $ \nabla^{lc}\theta=0.$ It exists a metric g on M such that gauge equation of the Levi-Civita connection admits a non-zero skew-symmetric solution.

Let proves the sufficient part (2) implies (1): Let $\theta$ be a skew-symmetric solution of the gauge equation($\nabla^{lc}\theta=0$). By assumption the rank of $\theta$ is 2, from the Proposition 3.7, we known that M admits a K-cosymplectic structures. From \cite{bazzoni2015k}[proposition 2.8] M admits a coKähler structure.

 \end{proof}

\begin{rem}
  Let  $(\text{M},g)$  be a closed orientable 3-dimensional  Riemannian manifold such that the gauge equation of Levi-Civita connection of g  admits a skew-symmetric solution. From work of Etienne Ghys \cite{ghys1983classification} \[   M\simeq \{\frac{\mathbb{T}^{2}\times[0,1]}{(x,0)\sim(A x,1)}\}\cup\{\textbf{Seifert fiber space}\}  \], where A is a Kahler isometry of $\mathbb{T}^{2}.$   
\end{rem}

\bibliographystyle{plain}
\bibliography{main.bib}
\end{document}